\def\({\left(}
\def\){\right)}
\newcommand\bls{\boldsymbol}
\newcommand{\gl}{\mathfrak{gl}}
\newcommand{\ga}{\gamma}
\newcommand\Ref{\eqref}
\newcommand{\mub}{{\bls\mu}}
\newcommand{\omegab}{{\bls\omega}}
\newcommand{\lab}{{\bls\la}}
\newcommand{\ket}[1]{{| #1 \rangle}}      
\newcommand{\bb}{\mathbf{b}}
\newcommand{\ab}{\mathbf{a}}
\newcommand{\nn}{\nonumber}
\newcommand{\bea}{\begin{eqnarray}}
\newcommand{\ena}{\end{eqnarray}}
\def\bel{\begin{eqnarray}}
\def\enl{\end{eqnarray}}
\newcommand{\be}{\begin{eqnarray*}}
\newcommand{\en}{\end{eqnarray*}}
\newcommand{\T}{{\otimes}}
\newcommand{\C}{{\mathbb C}}
\newcommand{\Z}{{\mathbb Z}}
\newenvironment{tenumerate}{
  \begin{enumerate}
  
  }{\end{enumerate}}
\newcommand{\bi}{\begin{tenumerate}}
\newcommand{\ei}{\end{tenumerate}}
\newcommand{\isoto}[1][]%
{{\mathop{\buildrel{\sim}\over\longrightarrow}\limits_{#1}}}
\def\[{\left[}
\def\]{\right]}
\newcommand{\la}{\lambda}
\newcommand{\al}{\alpha}
\numberwithin{equation}{section}
\newtheorem{thm}{Theorem}[section]
\newtheorem{prop}[thm]{Proposition}
\newtheorem{lem}[thm]{Lemma}
\newtheorem{cor}[thm]{Corollary}
\newtheorem{conj}[thm]{Conjecture}
\newcommand{\glinf}{\mathfrak{gl}_\infty}
\newcommand{\glhf}{\mathfrak{gl}_{\infty/2}}
\newcommand{\Yb}{\mathcal{Y}}
\newcommand{\E}{{\mathcal E}}
\newcommand{\one}{1}
\def\bi{\mathbf{i}}
\definecolor{red}{rgb}{0,0,0}
\definecolor{7/17}{rgb}{0,0,0}
\definecolor{7/17comment}{rgb}{0,0,0}
\definecolor{7/29}{rgb}{0,0,0}
\definecolor{7/29comment}{rgb}{0,0,0}
\definecolor{7/18}{rgb}{0.0,0.0,0}
\definecolor{7/18comment}{rgb}{0.0,0.0,0.0}
\definecolor{zh}{rgb}{0.0,0.0,0.9} 
\begin{document}

\begin{title}[Quantum toroidal  $\mathfrak{gl}_1$ algebra : plane partitions]
{Quantum toroidal $\mathfrak{gl}_1$ algebra : plane partitions}
\end{title}
\author{B. Feigin, M. Jimbo, T. Miwa and E. Mukhin}
\address{BF: Landau Institute for Theoretical Physics,
Russia, Chernogolovka, 142432, prosp. Akademika Semenova, 1a,   \newline
Higher School of Economics, Russia, Moscow, 101000,  Myasnitskaya ul., 20 and
\newline
Independent University of Moscow, Russia, Moscow, 119002,
Bol'shoi Vlas'evski per., 11}
\email{bfeigin@gmail.com}
\address{MJ: Department of Mathematics,
Rikkyo University, Toshima-ku, Tokyo 171-8501, Japan}
\email{jimbomm@rikkyo.ac.jp}
\address{TM: Department of Mathematics,
Graduate School of Science,
Kyoto University, Kyoto 606-8502,
Japan}\email{tmiwa@kje.biglobe.ne.jp}
\address{EM: Department of Mathematics,
Indiana University-Purdue University-Indianapolis,
402 N.Blackford St., LD 270,
Indianapolis, IN 46202}\email{mukhin@math.iupui.edu}

\begin{abstract}
In 
third paper of the series we construct a large family of 
representations of the quantum toroidal $\gl_1$ 
algebra
whose bases are parameterized by plane partitions with various boundary conditions and restrictions. We study the corresponding formal characters. As an application we obtain a Gelfand-Zetlin type 
basis for a class of irreducible lowest weight $\gl_\infty$-modules.
\end{abstract}

\maketitle

\section{Introduction}

In this paper we continue our study of representations of 
an algebra $\mathcal E=\mathcal E_{q_1,q_2,q_3}$, 
depending on 
three complex parameters $q_1,q_2,q_3$ with $q_1q_2q_3=1$.
It was originally introduced by Miki \cite{Miki07} 
as a two parameter analog of the $W_{1+\infty}$ algebra.
The basic structure theory of $\E$
and its representations have
been established in \cite{Miki07}. 
Its connection with the Macdonald operator and the deformed Virasoro/$W$-algebras was 
also revealed there. 
Some of his results will be
recalled in Section \ref{preliminaries}.  
Essentially the same algebra has been rediscovered 
later on and was 
given various other names: the Ding-Iohara algebra in 
\cite{FT},\cite{FHHSY}, or 
the elliptic Hall algebra in \cite{SV1}, \cite{SV2}. 
Having been unaware of the work of Miki, we have 
called $\E$ ``quantum continuous $\gl_\infty$'' in 
\cite{FFJMM1},\cite{FFJMM2}.
We are sorry about this oversight.   
 
We have decided to call $\E$ 
the quantum toroidal $\mathfrak{gl}_1$ algebra  
for the following reason. 
Let $\mathfrak d_q$ be the algebra generated by the symbols 
$Z^{\pm1},D^{\pm1}$
satisfying $DZ=qZD$, where $q\in\C^\times$, and regard it as
a Lie algebra endowed with the Lie bracket $[a,b]=ab-ba$.
The Lie algebra $\mathfrak{d}_q$ has  
a two-dimensional central extension 
$\mathfrak{d}_{q,c_1,c_2}$. 
As mentioned in \cite{Miki07}, the algebra 
$\E_{q_1,q_2,q_3}$ is a quantization of the universal enveloping algebra 
$U\left(\mathfrak{d}_{q,c_1,c_2}\right)$, where
one of the parameters, say $q_1$, is the quantization parameter and $q_2=q$. 
The situation is 
similar to that of the quantum toroidal algebra
$U_{q}(\mathfrak{sl}_{N,tor})$, 
whose classical limit is a
central extension of the Lie algebra of 
$N\times N$ matrices $x$ with entries in $\mathfrak{d}_q$, 
such that $\mathrm{res}\,\mathrm{tr}(x)=0$. 
(Here  $\mathrm{res}(a)=a_{0,0}$  
for $a=\sum a_{i,j}Z^iD^j\in \mathfrak{d}_q$.)

Throughout this paper we shall restrict our considerations to 
representations of a quotient of $\E$
by a one-dimensional center. The classical limit 
of the quotient algebra is 
$\mathfrak{d}_{q,\kappa,0}$, 
see Section \ref{preliminaries}.

\medskip

The algebra $\mathfrak d_q$ is isomorphic 
to the algebra of $q$-difference
operators. Namely 
$\mathfrak d_q$ has a faithful representation in the 
space $V=\C[Z,Z^{-1}]$, such that
 $Z$ acts as the multiplication operator 
$f(Z)\mapsto Zf(Z)$, 
and $D$ as the $q$-difference 
operator
$f(Z)\mapsto f(qZ)$. 
This gives rise to a Lie algebra homomorphism
$\mathfrak d_{q,\kappa,0}\rightarrow \gl_{\infty,\kappa}$,
where ${\mathfrak{gl}}_{\infty,\kappa}$
is the central extension of the
Lie algebra of linear transformations $T:\ V\rightarrow V$, 
$T(Z^j)=\sum_j T_{ij}Z^i$,  
such that there exists $N\in\Z$ for which
$T_{ij}=0$ whenever $|i-j|>N$. 

\medskip

The Lie algebra 
$\mathfrak{gl}_{\infty,\kappa}$ 
has a rich representation theory.
Let $\gl_\infty\subset \gl_{\infty,\kappa}$
be the Lie subalgebra of linear operators $T$ with finitely 
many non-zero matrix elements $T_{ij}$. 
Let  $W_\theta$ be the irreducible representation of $\mathfrak{gl}_\infty$
with the lowest weight
\begin{align*}
\theta=(\ldots,\theta_{-2},\theta_{-1},\theta_0,\theta_1,\theta_2,\ldots),\quad
E_{i,i}v_{\theta}=\theta_iv_{\theta},
\end{align*} 
 where $v_{\theta}$ is a lowest  
weight vector in $W_\theta$:
\begin{align*}
E_{i,j}v_\theta=0\text{ if $i>j$.}
\end{align*} 

If the sequence $\{\theta_i\}$ stabilizes as $i\rightarrow\pm\infty$,
then $W_\theta$ can be extended to the representation of 
${\mathfrak{gl}}_{\infty,\kappa}$.
Suppose that $\theta_i=\theta_-$ for $i\ll 0$, 
and $\theta_i=\theta_+$ for $i\gg 0$.
Then the central element $\kappa$ 
acts in $W_\theta$
by the scalar $\theta_--\theta_+$. 

Conjecturally all such $W_\theta$
can be deformed to the representations of $\mathcal E$. In this paper we 
confirm it in several special cases.

If 
$\theta=(\ldots,0,0,0,1,1,1,\ldots)$,
then $W_\theta$ is the well-known Fock representation given by semi-infinite wedges. In \cite{FFJMM1} the construction of the semi-infinite wedges was deformed and
as a result we get the Fock representation of  $\mathcal E$.
If the weight $\theta$ is 
anti-dominant, 
i.e.,  $\theta_i\in\Z$ and
$\theta_i-\theta_{i+1}\leq0$ for all $i\in \Z$,
the $\E$-modules corresponding to $W_{\theta}$ were also constructed in \cite{FFJMM1}, see also 
Section \ref{characters}.

Note that all these representations of $\mathcal E$ are described explicitly.
We have a natural basis and an explicit formula for the action on this basis.

\medskip

In the present paper we continue with the case 
$\theta(r)=(\ldots,0,0,0,r,r,r,\ldots)$,
where $r\in\C$ is generic. The character of $W_{\theta(r)}$
 in the principal grading is
given by the infinite product $\prod_{i=1}^\infty(1-q^i)^{-i}$. 
Incidentally, 
it coincides with the well-known 
Macmahon formula for the generating series of the plane partitions. 
Recall that a plane partition is a collection of 
non-negative integers $\{\mu^{(k)}_i\}_{i,k=1}^\infty$
satisfying $\mu^{(k)}_i\geq\mu^{(k+1)}_i$, $\mu^{(k)}_i\geq\mu^{(k)}_{i+1}$ for all $i,k$ and $\mu^{(k)}_i=0$ for $i+k$  large enough.

We construct a representation of $\mathcal E$ which is a deformation of $W_{(r)}$.
It depends on a complex parameter $K\not=0$, which is the value of a central element of $\E$
and is called the level of the representation.
It has an additional complex parameter $u\not=0$, which is related to an automorphism of $\E$.
Most importantly, it
has a distinguished basis labeled by the plane partitions and the action of $\E$ is explicit in this basis. We call this $\E$-module the Macmahon representation,
and denote it by
$\mathcal M(u,K)$.

Next, we observe that our construction has 
the following natural 
generalization.
Given three partitions $\al,\beta,\gamma$,  
we call a collection of numbers 
$\{\mu^{(k)}_j\}_{i,k=1}^\infty$,  
$\mu^{(k)}_j\in\Z_{\geq 0}\cup\{\infty\}$, 
a plane partition with the boundary condition
$\al,\beta,\gamma$,  if the following conditions are satisfied\\
(i)\quad
$\mu^{(k)}_i\geq\mu^{(k+1)}_i$, $\mu^{(k)}_i\geq\mu^{(k)}_{i+1}$ 
for all $i,k$,\\
(ii)\quad $\mu^{(k)}_i=\al_k$ for $i\gg 0$,\\
(iii)\quad $\mu^{(k)}_i=\gamma_i$ for $k\gg 0$,\\
(iv)\quad $\mu^{(k)}_i=\infty$ if and only if $i\le \beta_k$.\\ Let 
$\mathcal P[\al,\beta,\gamma]$ be the set of
plane partitions with the boundary condition $\al,\beta,\gamma$ .

The set $\mathcal P[\al,\beta,\gamma]$
appears in topological field theory as a fixed point set on Hilbert schemes
on toric $3$-dimensional Calabi-Yau manifolds 
\cite{ORV}.

We show that for generic values of $q_1,q_2, K$
the algebra $\mathcal E_{q_1,q_2,q_3}$ has an irreducible representation
$\mathcal M_{\al,\beta,\gamma}(u,K)$ depending on 
an additional
arbitrary complex parameter $u$ with a basis labeled by the set $\mathcal P[\al,\beta,\gamma]$ and 
give an explicit formula
for the action. 

Here the genericity assumption for $q_1,q_2$ means
$q_1^{i_1}q_2^{i_2}q_3^{i_3}\not=1$ unless 
$i_1=i_2=i_3$
and for $K$ means
$K\neq q_1^{i_1}q_2^{i_2}q_3^{i_3}$ for all integers $i_1,i_2,i_3$.
If $\al=\beta=\gamma=\emptyset$
we have $\mathcal M_{\al,\beta,\gamma}(u,K)=\mathcal M(u,K)$.

In the resonance case $K=q_1^{i_1}q_2^{i_2}q_3^{i_3}$
we show that the module $\mathcal M_{\al,\beta,\gamma}(u,K)$ is still well-defined, but becomes reducible. We describe singular vectors of $\mathcal M_{\al,\beta,\gamma}(u,K)$ and the irreducible quotient generated by the vector corresponding to the minimal
partition in $\mathcal P[\al,\beta,\gamma]$.
In the simplest case where
$\al=\beta=\gamma=\emptyset$ and $K=q_1q_2^{i_2}q_3^{i_3}$ $(i_2,i_3\geq1)$,
the irreducible quotient has a
basis labeled by 
plane partitions $\mu^{(k)}_i\in P(0,0,0)$ 
such that $\mu^{(i_2)}_{i_3}=0$.

\medskip

For the case of general $\al,\beta,\gamma$ the representation
$\mathcal M_{\al,\beta,\gamma}(u,K)$
does not have the 
limit $q_1\to 1$. But we show that if $\beta=\emptyset$,
it does and therefore it is a deformation of a 
$\gl_{\infty,\kappa}$-module.
Suppose further that for $n,c\in\Z_{\geq0}$
\begin{align*}
\gamma=(\underbrace{c,c,\dots,c}_n,0,0\dots),\
K=(q_2q_3)^n,
\end{align*}
then the irreducible quotient has the limit $q_1\rightarrow1$ and the limit
is an irreducible $\gl_{\infty,\kappa}$ module.
The lowest weight $\theta$ 
of this module is given in \eqref{GZ-hwt}. 

The basis of this irreducible quotient is labeled
by the set $P^n(\al,c)$ consisting
of all  $\{\mu_i^{(k)}\}_{i,k=1}^\infty\in P(\al,\emptyset,\gamma)$
such that $\mu_{n+1}^{(n+1)}=0$.
This basis leads us to find a Gelfand-Tsetlin type basis for the 
$\gl_{\infty,\kappa}$
module $W_{\theta}$, where $\theta$ is given by
\eqref{GZ-hwt}. 
The action is given
explicitly by Gelfand-Zetlin type formulas, see 
 \eqref{GZact1}, \eqref{GZact2}, \eqref{GZact3}, 
\eqref{GZact0}, \eqref{GZact00}.
We expect
that similar bases exist for all $\theta$, but we were unable to find them
in the literature (except for the standard case of the dominant weights).

Following \cite{KR2}, we give an explicit bosonic
construction of $W_{\theta}$. 
A version of the
Schur-Weyl-Howe duality established in \cite{KR2}
allows us to write bosonic 
character formulas for $W_\theta$ in the principal 
grading. 
Equivalently, our formula computes the generating function for the set $P^n(\al,c)$.

We do not have a recipe for how to compute the characters of the $\E$-modules 
$\mathcal M_{\al,\beta,\gamma}(u,K)$ where $K=q_1^{i_1}q_2^{i_2}q_3^{i_3}$
in general, but this problem seems to have a very intriguing structure.

For example, if $K=q_2^N$, then $\mathcal{E}$ has a big two-sided ideal. 
After factorization we get a smaller algebra $\mathcal{E}_{q_1,q_2,q_3,K}^{\mathrm{red}}$ which
can be identified with the elliptic $W$-algebra.
In particular, an appropriate limit of $\mathcal{E}_{q_1,q_2,q_3,K}^{\mathrm{red}}$ 
gives us the $W$-algebra for $\widehat{\mathfrak{gl}}_N$. The representation theory 
of the $W$-algebra for $\widehat{\mathfrak{gl}}_N$ is a well-known subject.
In particular, $W$-algebra has a class of representations appearing in the 
minimal models. In \cite{FFJMM2} we showed that there exist $\mathcal{E}$-modules which
have the same characters as the representations of minimal models of the $W$-algebra.  

In the case $K=q_2^mq_3^n$ the algebra $\mathcal{E}$ also has  a big two-sided ideal, and conjecturally the quotient in the appropriate limit gives us the
$W$-algebra for the superalgebra $\widehat{\mathfrak{gl}}(m,n)$.
This conjecture allows us to predict some character formulas, which can be checked by a computer for small values of parameters. We give some of such formulas at the end of the paper. 

\vskip10pt

The paper is organized as follows. 
In Section \ref{preliminaries} 
we recall and discuss some known facts
about algebra $\E$ and its representation. In Section \ref{Macmahon} 
we construct and study the Macmahon representations 
$\mathcal M_{\al,\beta,\gamma}(u,K)$. 
In Section \ref{GZbasis}, we study the $\gl_{\infty}$ 
limits of the Macmahon representations. In particular, we describe the Gelfand-Zetlin type basis for some  
$\gl_{\infty,\kappa}$-modules.
In Section \ref{characters} we construct 
the $\gl_{\infty}$-modules using Heisenberg algebra and compute their
characters by the Schur-Weyl-Howe duality of \cite{KR2}.
We finish with some conjectural character formulas.

\section{Preliminaries}\label{preliminaries} 

\subsection{Algebra $\mathcal E$}
Let $q_1,q_2,q_3\in\C$ be  
complex parameters satisfying the relation $q_1q_2q_3=1$. 
We assume that $q_1,q_2,q_3$ are not a root of unity. 
Let
\be
g(z,w)=(z-q_1w)(z-q_2w)(z-q_3w).
\en
The {\it quantum toroidal $\gl_1$} algebra
is an associative algebra $\E$ with generators
$e_i$, $f_i$, $i\in\Z$, 
$\psi^\pm_r$, $r\in\Z_{> 0}$ 
and invertible elements $\psi_0^\pm$,  
$C$,  
satisfying the following defining relations:
\begin{gather*}
\text{$C$: central},\label{Rel0}\\
\psi^\pm(z)\psi^\pm(w)=\psi^\pm(w)\psi^\pm(z),\label{Rel1}\\
g(Cz,w)g(Cw,z)\psi^+(z)\psi^-(w)=g(z,Cw)g(w,Cz)\psi^-(w)\psi^+(z)\,,\label{Rel2}\\
g(Cz, w)\psi^{+}(z)e(w)=-g( w,Cz)e(w)\psi^{+}(z),\\
g(z,w)\psi^{-}(z)e(w)=-g(w,z)e(w)\psi^{-}(z),\label{Rel3}\\
g(w,z)\psi^{+}(z)f(w)=-g(z,w)f(w)\psi^{+}(z),\\
g(w,Cz)\psi^{-}(z)f(w)=-g(Cz,w)f(w)\psi^{-}(z),\label{Rel4}\\
[e(z), f(w)]=\frac{1}{g(1,1)}(\delta(Cw/z)\psi^{+}(w)-\delta(Cz/w)\psi^{-}(z)),\label{Rel5}\\
g(z,w)e(z)e(w)=-g(w,z)e(w)e(z),\\
g(w,z)f(z)f(w)=-g(z,w)f(w)f(z),\label{Rel6}\\
[e_0,[e_1,e_{-1}]]=[f_0,[f_1,f_{-1}]]=0.
\label{Rel7}
\end{gather*}
Here $\delta(z)=\sum_{n\in\Z} z^n$ denotes the formal
delta function, and the generating series of the generators of $\E$ are given by
\begin{align*}
&e(z)=\sum_{i\in\Z} e_iz^{-i},
\quad  f(z)=\sum_{i\in\Z} f_iz^{-i},
\quad
\psi^\pm(z)=\sum_{\pm i\ge 0}\psi^{\pm}_iz^{-i}\,.
\end{align*}
Note that  $\E$ depends on the {\it unordered} set of parameters $\{q_1,q_2,q_3\}$, 
as all $q_i$ enter the relations symmetrically through the function $g(z,w)$.

Algebra $\E$ has been introduced and studied by Miki \cite{Miki07}
under the name ``$(q,\gamma)$ analog of the $W_{1+\infty}$ algebra''. 
(To be precise, in \cite{Miki07} an additional relation 
$\psi^+_0\psi^-_0=1$ is imposed, and 
$\E$ is a one-dimensional split central extension of that of \cite{Miki07}.)

Consider the associative $\C$ algebra with generators $Z^{\pm1}$, $D^{\pm1}$ with the relation $DZ=qZD$. 
Let $\mathfrak d_q$ be the same algebra viewed as a Lie algebra by $[a,b]=ab-ba$. 
Then $\mathfrak d_q$ has a two-dimensional central extension \cite{KR}
$\mathfrak d_{q,c_1,c_2}=\mathfrak d_q\oplus \C c_1\oplus \C c_2$,  
where $c_1,c_2$ are central elements and the commutator is given by
\begin{align*}
[Z^{i_1}D^{j_1},Z^{i_2}D^{j_2}]= 
(q^{j_1i_2}-q^{j_2i_1})
Z^{i_1+i_2}D^{j_1+j_2}
+\delta_{i_1+i_2,0}
\delta_{j_1+j_2,0}q^{-i_1j_1}(i_1c_1+j_1c_2).
\end{align*}
The element $Z^0D^0=1$ is (split) central in $\mathfrak{d}_{q,c_1,c_2}$. 
The quantum toroidal $\mathfrak{gl}_1$ algebra $\E$
is a quantization of the universal enveloping algebra $U\mathfrak d_{q,c_1,c_2}$ 
where $q_1$ is a parameter of the quantization and 
$q_2=q^2$. 
Algebra $\E$ has three central elements, $C$  
 and $\psi^\pm_0$. 
Among the latter only the ratio $(\psi^+_0)^{-1} \psi^-_0$ is essential.
We say that an $\E$-module $V$ has {\it level $(x,y)\in \C^2$} if 
$C^{2}$ acts by $x$ and $(\psi^+_0)^{-1}\psi_0^-$ acts by  $y$. 

In what follows, we shall always consider 
representations of $\E$ on which $C$ acts as identity.  
In other words we study representations of the quotient algebra $\E/\langle C-1\rangle$,  
where the defining relations simplify as follows.
\begin{gather}
\label{rel1}
\psi^\epsilon(z)\psi^{\epsilon'}(w)=\psi^{\epsilon'}(w)\psi^\epsilon(z)\quad
(\epsilon,\epsilon'\in\{+,-\})\,,\\
\label{rel2}
g(z,w)\psi^{\pm}(z)e(w)=-g(w,z)e(w)\psi^{\pm}(z),\\
g(w,z)\psi^{\pm}(z)f(w)=-g(z,w)f(w)\psi^{\pm}(z),\\
\label{rel3}
[e(z), f(w)]=\frac{\delta(z/w)}{g(1,1)}(\psi^{+}(w)-\psi^{-}(z)),\\
\label{rel4}
g(z,w)e(z)e(w)=-g(w,z)e(w)e(z),\\
g(w,z)f(z)f(w)=-g(z,w)f(w)f(z),\\
\label{rel5}
[e_0,[e_1,e_{-1}]]=[f_0,[f_1,f_{-1}]]=0.
\end{gather}
In the quotient algebra, the subalgebra generated by $\psi^\pm_{\pm i}$, $i\in\Z_{\ge0}$, is commutative. 
We call an $\E$-module $V$ {\it tame} if $\psi^\pm_{\pm i}$, 
$i\in\Z_{\ge0}$, act by 
diagonalizable operators with simple joint spectrum.

Algebra $\E$ is $\Z^2$-graded with the assignment
\begin{align*}
\mathrm{deg}\,e_i=(1,i)\,,
\quad
\mathrm{deg}\,f_i=(-1,i)\,,
\quad
\mathrm{deg}\,\psi^\pm_i=(0,i)\,,
\quad
\mathrm{deg}\,C=(0,0)\,.
\end{align*}
Let $\E_{(j,k)}$ denote the homogeneous component 
of $\E$ of degree $(j,k)\in\Z^2$.
We say that an $\E$ module $V=\oplus_{n\in\Z}V_n$ is {\it $\Z$-graded} if 
$\E_j V_n\subset V_{n+j}$ where $\E_{j}=\sum_{k\in\Z}\E_{(j,k)}$.
We call it {\it quasifinite} if $\mathrm{dim}\,V_n<\infty$  for all $n\in\Z$. 
Let $\phi^\pm(z)\in \C[[z^{\mp1}]]$ be formal power series in $z^{\mp1}$ with non-vanishing constant term. We say that a $\Z$-graded $\E$ module $V$ is a {\it lowest weight module} 
with {\it lowest weight} $(\phi^+(z),\phi^-(z))$ 
if it is generated by a non-zero vector $v$ such that
\begin{align*}
f(z)v=0,\quad 
\psi^\pm(z)v=\phi^\pm(z)v,
\quad C v=v\,.
\end{align*}

The following result due to Miki is an analog of the classification 
theorem for finite dimensional modules of 
quantum affine algebras. 
\begin{thm}[\cite{Miki07}]\label{highest weight thm}
Up to isomorphisms,  
an irreducible lowest weight module $V$ is 
uniquely determined by its lowest weight. 
It is quasifinite if and only if
there exists a rational function $R(z)$, which is regular and non-zero
at $z=0,\infty$,
such that 
$\phi^\pm(z)$ is the expansion of $R(z)$ at $z^{\pm1}=\infty$.
\end{thm}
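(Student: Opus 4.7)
I would follow the standard pattern for Drinfeld/Chari--Pressley type classifications of quantum affine algebras, in three stages.

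\emph{Uniqueness given a lowest weight.} I would invoke a PBW-type triangular decomposition $\E/\langle C-1\rangle\cong\E^-\cdot\E^0\cdot\E^+$, with $\E^\pm$ generated by $\{e_i\}_{i\in\Z}$ (resp.\ $\{f_i\}_{i\in\Z}$) and $\E^0$ by $\{\psi^\pm_{\pm k}\}_{k\ge 0}$; this structure is established in \cite{Miki07}. Given $(\phi^+,\phi^-)$, define $M(\phi^+,\phi^-)$ as the quotient of $\E/\langle C-1\rangle$ by the left ideal generated by the $f_i$'s and by $\psi^\pm_{\pm k}-\phi^\pm_{\pm k}$. The decomposition identifies $M(\phi^+,\phi^-)$ with $\E^+$ as a left $\E^+$-module, so it is $\Z$-graded with one-dimensional lowest piece $\C v$; it therefore admits a unique maximal graded submodule $N$ (the sum of all graded submodules missing $v$), and any irreducible lowest weight module with lowest weight $(\phi^+,\phi^-)$ is isomorphic to $L(\phi^+,\phi^-):=M(\phi^+,\phi^-)/N$.

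\emph{Necessity of rationality.} Applying \eqref{rel3} to $v$ with $f(w)v=0$ yields, coefficient by coefficient,
\begin{align*}
f_j e_i\, v=\frac{1}{g(1,1)}\bigl(\phi^-_{i+j}-\phi^+_{i+j}\bigr)\,v,
\end{align*}
with the convention that $\phi^+_m$ (resp.\ $\phi^-_m$) vanishes for $m<0$ (resp.\ $m>0$). Quasifiniteness of $V$ forces $\dim V_1<\infty$ for $V_1=\sum_{i\in\Z}\C\cdot e_iv$; pairing with $f_j$ through the displayed identity converts this into a finite-rank Hankel-type condition on the two-sided sequence $(\phi^-_m-\phi^+_m)_{m\in\Z}$. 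A classical theorem on Hankel matrices then forces $\phi^+(z)$ and $\phi^-(z)$ to be the expansions at $z=\infty$ and $z=0$, respectively, of a \emph{single} rational function $R(z)$, whose denominator is extracted from the recurrence coefficients. Non-vanishing of $\phi^\pm_0$ guarantees that $R$ is regular and nonzero at both $0$ and $\infty$.

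\emph{Sufficiency.} Conversely, given an admissible $R(z)$, I would realize the prescribed $(\phi^+,\phi^-)$ as the lowest weight of the irreducible subquotient of a tensor product of ``vector''-type representations of $\E$: one elementary factor per zero and per pole of $R(z)$, with spectral parameter set to the corresponding root. Each factor has one-dimensional weight spaces, so the tensor product is quasifinite; the cyclic submodule generated by the tensor of lowest-weight vectors has lowest weight $(\phi^+,\phi^-)$, and its irreducible quotient is $L(\phi^+,\phi^-)$, quasifinite by construction.

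The main obstacle is the necessity direction: converting the finite-rank Hankel condition on $(\phi^-_m-\phi^+_m)_{m\in\Z}$ into the statement that $\phi^+$ and $\phi^-$ are two one-sided expansions of a \emph{single} rational function, rather than two unrelated ones. This matching of numerator and denominator across the $\phi^+$ and $\phi^-$ sides is forced by the specific combination $\psi^+(w)-\psi^-(z)$ appearing on the right of \eqref{rel3}, and carefully tracking this coupling through the formal delta-function identity is the technical heart of Miki's argument.
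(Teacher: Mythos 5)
The paper does not prove this statement; it is quoted verbatim from Miki's work and used as a black box, so there is no internal proof to compare against. Your three-stage outline (Verma module via triangular decomposition for uniqueness, a Hankel-type rank condition for necessity of rationality, explicit constructions for sufficiency) is the expected Kac--Radul/Miki strategy, and the identity $f_j e_i v=\frac{1}{g(1,1)}(\phi^-_{i+j}-\phi^+_{i+j})v$ obtained from \eqref{rel3} is correct, including the sign convention.

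There is, however, a concrete error in your sufficiency step. You propose to realize $(\phi^+,\phi^-)$ inside a tensor product of ``vector''-type representations, ``one factor per zero and pole of $R(z)$,'' and to take ``the tensor of lowest-weight vectors.'' But the vector representation $V(u)$ of $\E$ is \emph{not} a lowest weight module: $f(z)[u]_i$ is a nonzero multiple of $[u]_{i-1}$ for every $i$, so there is no $f$-singular vector, and a finite tensor product of $V(u)$'s has no lowest weight vector either. The building blocks with a genuine lowest weight vector and rational $\psi^\pm$-eigenvalue of the desired shape are the Fock modules $\mathcal F(u)$ (lowest weight $(1-q_2u/z)/(1-u/z)$) and their $q_i$-permuted and dual variants; a finite tensor product of those, with suitably chosen evaluation parameters, gives a quasifinite lowest weight module whose irreducible quotient has lowest weight equal to the prescribed expansions of $R(z)$. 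Your necessity argument is also only an assertion at the crucial point: passing from $\dim V_1<\infty$ (hence linear dependences $\sum_i\alpha_i e_i v=0$, hence recurrences $\sum_i\alpha_i c_{i+j}=0$ for the two-sided sequence $c_m=\phi^-_m-\phi^+_m$) to the conclusion that $\phi^+$ and $\phi^-$ are two one-sided expansions of a \emph{single} rational function requires showing that one recurrence works uniformly in $m\in\Z$, and this is exactly the technical content of Miki's theorem; naming it ``a classical theorem on Hankel matrices'' is a placeholder, not a proof.
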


{\it Remark.}\quad 
In \cite{Miki07}, highest weight modules are considered.
Though this is purely a matter of convention,
in this paper we shall deal with lowest weight modules for historical reasons. 
We note that we have called the same object `highest weight modules' 
in \cite{FFJMM1}, \cite{FFJMM2}. 

\medskip

Algebra $\E$ has the formal comultiplication
\begin{gather}\label{tre}
\triangle e(z)= e(z)\T 1 + \psi^-(z)\T e(z),\\
\label{trf}
\triangle f(z)= f(z)\T \psi^+(z) + 1\T f(z),\\
\label{trpsi}
\triangle \psi^\pm(z)= \psi^\pm(z)\T \psi^\pm(z).
\end{gather}
These formulas do not define a comultiplication in the usual sense
since the right hand sides contain infinite sums.  
In \cite{Miki07}, it is shown that the 
twisted coproduct by a certain automorphism
is well defined on tensor products of 
a class of modules (called restricted modules).
In this paper, we shall take a slightly different approach  
and use the original coproduct given by \eqref{tre}, \eqref{trf}, \eqref{trpsi}
when it makes sense. 
The arguments for justification can be found e.g. in the proof of Proposition 3.1 in \cite{FFJMM1}.

\subsection{Fock modules}
Let $u\in\C$. Let $V(u)=V_1(u)$ be a complex vector space 
spanned by basis $[u]_i$, $i\in\Z$. Then the formulas
\begin{align}
&(1-q_1)e(z)[u]_i=\delta(q_1^iu/z)[u]_{i+1}\,,\notag \\
&-(1-q_1^{-1})f(z)[u]_i=\delta(q_1^{i-1}u/z)[u]_{i-1}\,,\notag \\
&\psi^+(z)[u]_i=\frac{(1-q_1^iq_3u/z)(1-q_1^iq_2u/z)}{(1-q_1^iu/z)(1-q_1^{i-1}u/z)}[u]_i,
\label{PSIPLUSACTION}\\
&\psi^-(z)[u]_i=\frac{(1-q_1^{-i}q_3^{-1}z/u)(1-q_1^{-i}q_2^{-1}z/u)}{(1-q_1^{-i}z/u)(1-q_1^{-i+1}z/u)}[u]_i,
\label{PSIMINUSACTION}
\end{align}
define a structure of an irreducible tame quasifinite $\E$-module on $V(u)$ 
of level $(1,1)$.
We call the $\E$-module $V(u)$ the 
{\it vector representation}. 
The vector representation is not a lowest
weight representation, it is the counterpart of 
the $\mathfrak d_q$ 
module $\C[Z,Z^{-1}]$.

Note that
$q_1$ plays a special role in the definition of $V(u)$ while $q_2$ and
$q_3$ participate symmetrically. Therefore there are two other vector
representations obtained from $V(u)$ by switching roles of $q_i$. 

We set
\begin{align*}
\psi_i(z)=\psi(q_1^iz), \qquad \psi(z)=\frac{(1-q_3z)(1-q_2z)}{(1-z)(1-q_2q_3z)}.
\end{align*}
By \Ref{PSIPLUSACTION}, \Ref{PSIMINUSACTION}, we have $\psi^\pm(z)[u]_i=\psi_i(u/z)[u]_i$.

The Fock representation 
$\mathcal F(u)=\mathcal F_2(u)=\oplus_\la\C|\la\rangle$
is constructed in the infinite tensor product of
the vector representations (see \cite{FFJMM1}):
\begin{align}
&\mathcal F(u)\subset V(u)\otimes V(uq_2^{-1})\otimes V(uq_2^{-2})\otimes\cdots,\nn\\
&|\la\rangle=[u]_{\la_1}\otimes[uq_2^{-1}]_{\la_2-1}\otimes[uq_2^{-2}]_{\la_3-2}\otimes\cdots.
\label{INFINITETENSOR}
\end{align}
Here $\la=(\la_1,\la_2,\ldots)$ is a partition: $\la_1\geq\la_2\geq\cdots\geq0$, $\la_j\in\Z_{\geq0}$;
$\la_N=0$ for a large $N$. We denote the corresponding Young diagram by $Y_\la$.

In this tensor product we have two problems. First, we should avoid poles in the substitution.
Let us examine the factor in the action of $e(z)$.

We prepare some notations.
Let $\la\pm1_i$ denote the partition $\mu$ such that $\mu_j=\la_j$ if $j\neq i$ and
$\mu_i=\la_i\pm1$.
We say $(i,j)$ is a concave (resp., convex) corner of $\la$ if and only if
\begin{align*}
\la_i=j-1<\la_{i-1}\quad(\text{resp., }\la_i=j>\la_{i+1}).
\end{align*}
Denote by $CC(Y_\la)$ (resp., $CV(Y_\la)$) the set of concave (resp.,
convex) corners of $\la$.

From the comultiplication rule we have
\begin{align}
&e(z)|\la\rangle=\sum_{i=1}^\infty\frac{\psi_{\la,i}}{1-q_1}\delta(q_1^{\la_i}q_3^{i-1}u/z)|\la+1_i\rangle,\notag \\ 
&\psi_{\la,i}=\prod_{k=1}^{i-1}
\frac{(1-q_1^{\la_k-\la_i}q_3^{k-i+1})(1-q_1^{\la_k-\la_i-1}q_3^{k-i-1})}
{(1-q_1^{\la_k-\la_i}q_3^{k-i})(1-q_1^{\la_k-\la_i-1}q_3^{k-i})}.\label{PSILA}
\end{align}
Note that $\psi_{\la,i}$ has no pole. It has a zero when $\la_{i-1}=\la_i$. This zero prohibits a term $|\mu\rangle$
with $\mu=\la+1_i$ which breaks the condition $\mu_{i-1}\geq\mu_i$ from appearing in the right hand side.
Thus the above sum reduces to a finite sum:
\begin{align*}
e(z)|\la\rangle=\sum_{(i,j)\in CC(\la)}\frac{\psi_{\la,i}}{1-q_1}\delta(q_1^{j-1}q_3^{i-1}u/z)|\la+1_i\rangle.
\end{align*}

Second, when we deal with
the semi-infinite tensor product we have to give
a meaning to the infinite product which appears in the action of $\psi^\pm(z)$ and $f(z)$.
Let us give a meaning to the infinite product which appears in the action of $\psi^\pm(z)$:
\begin{align*}
&\psi^\pm(z)|\la\rangle=\psi_\la(u/z)|\la\rangle,\quad
\psi_\la(u/z)=\prod_{i=1}^\infty\psi_{\la_i-i+1}(uq_2^{-i+1}/z).
\end{align*}
The product can be written as
\begin{align}
\psi_\la(u/z)=\frac{1-q_1^{\la_1-1}q_3^{-1}u/z}{1-q_1^{\la_1}u/z}
\prod_{j=1}^\infty
\frac{(1-q_1^{\la_j}q_3^ju/z)(1-q_1^{\la_{j+1}-1}q_3^{j-1}u/z)}
{(1-q_1^{\la_{j+1}}q_3^ju/z)(1-q_1^{\la_j-1}q_3^{j-1}u/z)},\label{PSI}
\end{align}
which is convergent because of the boundary condition $\la_N=0$ for large $N$.
We remark that the convergence is valid, in general, if $\lim_{i\rightarrow\infty}\la_i$ exists.
We will use \eqref{PSI} under that condition later.
This formula implies that the level 
of $\mathcal{F}(u)$
is $(1,q_2)$.
For the vacuum $|\emptyset\rangle$, 
i.e., the empty Young diagram, we have
\begin{align}
&\psi_\emptyset(u/z)=\frac{1-q_2u/z}{1-u/z}.\label{HW}
\end{align}
This is the lowest weight of $\mathcal{F}(u)$.
The general formula \eqref{PSI} for $|\la\rangle$ can be understood as starting from the 
lowest
weight \eqref{HW}
for the vacuum, and multiplying the contribution from each box of $Y_\la$.
Namely, set
\begin{align*}
\psi_{i,j}(u/z)&=\frac{\psi_{j-i+1}(uq_2^{-i+1}/z)}{\psi_{j-i}(uq_2^{-i+1}/z)}\\
&=\frac{(1-q_1^jq_3^iu/z)(1-q_1^{j-2}q_3^{i-1}u/z)(1-q_1^{j-1}q_3^{i-2}u/z)}
{(1-q_1^{j-1}q_3^iu/z)(1-q_1^jq_3^{i-1}u/z)(1-q_1^{j-2}q_3^{i-2}u/z)}.
\end{align*}
The rational function $\psi_\la(u/z)$ can be determined recursively by
\begin{align}
&\psi_{\la}(u/z)=\psi_{i,\la_i}(u/z)\psi_{\la-1_i}(u/z).\label{REC}
\end{align}
This formula immediately follows from \eqref{PSIPLUSACTION},\eqref{PSIMINUSACTION}
and the comultiplication rule. It says that
the contribution from the box $(i,j)$ is $\psi_{i,j}(u/z)$.
Using \eqref{REC}, it is easy to see that
\begin{align}
\psi_\la(u/z)=\prod_{(i,j)\in CC(\la)}\frac{1-q_1^{j-2}q_3^{i-2}u/z}{1-q_1^{j-1}q_3^{i-1}u/z}
\prod_{(i,j)\in CV(\la)}\frac{1-q_1^jq_3^iu/z}{1-q_1^{j-1}q_3^{i-1}u/z}\,.
\label{NORMALIZATION}
\end{align}
From this, one can see that the representation is tame.

\begin{figure}
\begin{picture}(150,150)(40,-30)

\put(0,60){\line(1,0){120}}
\put(0,40){\line(1,0){120}}
\put(0,20){\line(1,0){20}}

\put(120,80){\line(0,-1){40}}
\put(100,80){\line(0,-1){40}}
\put(80,80){\line(0,-1){40}}
\put(60,80){\line(0,-1){40}}
\put(40,80){\line(0,-1){40}}
\put(20,80){\line(0,-1){60}}

\put(-15,85){$\la$}
\put(-20,-30)
{$CC(\la)=\{(1,7),(3,2),(4,1)\},\ CV(\la)=\{(2,6),(3,1)\}$}
\put(160,80){$j,q_1$}
\put(-10,-10){$i,q_3$}

\thicklines
\put(0,80){\vector(1,0){150}}
\put(0,80){\vector(0,-1){80}}

\end{picture}
\caption{Partition}
\end{figure}
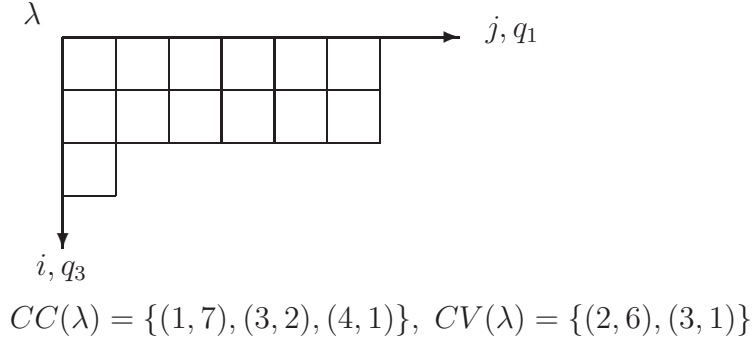

The formula for the action of $f(z)$ is obtained similarly:
\begin{align}
&f(z)|\la\rangle=\sum_{i=1}^\infty\frac{q_1\psi'_{\la,i}}{1-q_1}\delta(q_1^{\la_i-1}q_3^{i-1}u/z)|\la-1_i\rangle,\notag \\
&\psi'_{\la,i}=\frac{1-q_1^{\la_{i+1}-\la_i}}{1-q_1^{\la_{i+1}-\la_i+1}q_3}
\prod_{k=i+1}^\infty\frac{(1-q_1^{\la_k-\la_i+1}q_3^{k-i+1})(1-q_1^{\la_{k+1}-\la_i}q_3^{k-i})}
{(1-q_1^{\la_{k+1}-\la_i+1}q_3^{k-i+1})(1-q_1^{\la_k-\la_i}q_3^{k-i})}.\label{PSIPRIME}
\end{align}
Again, $\psi'_{\la,i}$ has no pole, and the zero at $\la_i=\la_{i+1}$ prohibits the appearance of terms
$|\mu\rangle$ which breaks the condition $\mu_i\geq\mu_{i+1}$. Thus, the action of $f(z)$ reads as
\begin{align}
&f(z)|\la\rangle=\sum_{(i,j)\in CV(\la)}\frac{q_1\psi'_{\la,i}}{1-q_1}\delta(q_1^{j-1}q_3^{i-1}u/z)|\la-1_i\rangle.
\label{FACTION}
\end{align}

If
we exchange $q_1$ with $q_3$ the representation $\mathcal F(u)$ changes. Let us denote it
by ${\mathcal F}'(u)$. This representation is realized 
inside
the semi-infinite tensor product $V_3(u)\otimes V_3(uq_2^{-1})\otimes V_3(uq_2^{-2})\otimes\cdots$.
Since  ${\mathcal F}'(u)$ has the same lowest weight 
\eqref{HW} as that of $\mathcal F(u)$, these
two modules are isomorphic by Theorem \ref{highest weight thm}.
We can construct the isomorphism explicitly.
Look at the action of $\psi^\pm(z)$ \eqref{NORMALIZATION}.
If we exchange $(i,j)\leftrightarrow(j,i)$ and $q_1\leftrightarrow q_3$ simultaneously,
the factors are invariant.
Therefore, as $\psi_\pm(z)$ modules, for arbitrary nonzero constants $c_\la$
the mapping $|\la\rangle\mapsto c_\la|\la'\rangle$,  
with $\la'$ being the transpose of $\la$,
is an intertwiner.
Since the representations are tame, this is the only way of intertwining these two $\E$-modules.
Theorem \ref{highest weight thm} shows the existence of the set of constants $c_\la$.

Now let us discuss the relation \eqref{rel3}.
On the subspace of $V(u)\otimes V(uq_2^{-1})\otimes\cdots\otimes V(uq_2^{-N+1})$
that is spanned by the vector
\begin{align}
&|\la\rangle^{(N)}=[u]_{\la_1}\otimes[uq_2^{-1}]_{\la_2-1}
\otimes\cdots\otimes[uq_2^{-N+1}]_{\la_N-N+1}\ ,\label{FINITETENSOR}
\end{align}
where $\la_1\geq\la_2\geq\cdots\geq\la_N$, the relation \eqref{rel4} is valid.
The left hand side is a sum of products of delta functions of the form
$c(q_1,q_3)\delta(z/w)\delta(q_1^aq_3^bu/z)$. The right hand side is a difference of 
two series $\psi^\pm(z)$: one is obtained from the rational function with simple poles,
\begin{align*}
&\psi^{(N)}_\la(u/z)=\prod_{j=1}^N\psi_{\la_j-j+1}(q_2^{-j+1}u/z),
\end{align*}
by expanding it in $z^{-1}$,
and the other from the same rational function by expanding it in $z$.
The difference is a sum of delta functions. They can be computed from the position of the poles
of $\psi^{(N)}_\la(u/z)$ and their residues.

Now, consider the semi-infinite action on \eqref{INFINITETENSOR},
and compare it with the action on \eqref{FINITETENSOR}. If $N$ is large enough so that
$\la_N=0$, we identify \eqref{INFINITETENSOR} with \eqref{FINITETENSOR}.
Then the action of $e(z)$ is the same.
The actions of $\psi^\pm(z)$ are slightly different:
\begin{align}
\psi^{(N)}_\la(u/z)=\psi_\la(u/z)\frac{1-q_3^Nu/z}{1-q_1^{-1}q_3^{N-1}u/z}.\label{MODIFICATION}
\end{align}
The factor $\frac{1-q_3^Nu/z}{1-q_1^{-1}q_3^{N-1}u/z}$
is dropped in the action of $\psi^\pm(z)$ on $\mathcal F(u)$.
We also drop the same factor from the action of $f(z)$.
This explains the factor $\psi'_{\la,i}$ in \eqref{FACTION}: for large $N$, we have 
\begin{align}
\psi'_{\la,i}=\prod_{k=i+1}^N\psi_{\la_k-k+1}(q_2^{-k+1}u/z)
\times\frac{1-q_3^Nq_2u/z}{1-q_3^Nu/z}
\Bigg|_{u/z\rightarrow q_1^{-\la_i+1}q_3^{-i+1}}.\label{PSILAP}
\end{align}

Since the tensor product \eqref{INFINITETENSOR} does not have the $(N+1)$st component,
the equality \eqref{rel4} for \eqref{FINITETENSOR} contains an extra term with
the delta function $\delta(q_1^{-1}q_3^{N-1}u/z)$. On the other hand, in the action on $\mathcal F(u)$,
this term is killed by the zero of $\psi'_{\la,N}$ as discussed before;
going to \eqref{INFINITETENSOR}
this term is dropped in both sides of \eqref{rel4}. The effect of the modification \eqref{MODIFICATION}
is the same on each delta function term because it is the 
multiplication by
the same factor.
Thus the equality \eqref{rel4} is valid on $\mathcal F(u)$.

The value of $\psi^-_0$ has been changed by the multiplication because the value of this factor at $z=0$ is $q_2$.
The modification produces the non-trivial level $(1,q_2)$ for the representation $\mathcal F(u)$.

\section{Macmahon modules}\label{Macmahon} 
 \subsection{Vacuum Macmahon modules}
Let us construct a level $(1,K)$ representation
\begin{align}
\mathcal M(u,K)\subset\mathcal F(u)\otimes\mathcal F(uq_2)\otimes\mathcal F(uq_2^2)\otimes\cdots
\label{INFTEN0}
\end{align}
with basis
\begin{align*}
\mathcal M(u,K)=\bigoplus_{\lab}\C|\lab\rangle,\
\lab=(\la^{(1)},\la^{(2)},\la^{(3)},\cdots), 
\end{align*}
where $\lab$ is a plane partition, i.e., each $\la^{(k)}=(\la^{(k)}_1,\la^{(k)}_2,\cdots,0,0,\cdots)$
is a partition and
\begin{align}
\la^{(k)}_i\geq\la^{(k+1)}_i\label{PP}
\end{align}
is satisfied. We require $\la^{(N)}=\emptyset$ for large $N$.
In particular, we set
\begin{align*}
\boldsymbol\emptyset
=(\emptyset,\emptyset,\emptyset,\cdots).
\end{align*}
With each $\lab$ we associate a subset $Y_\lab$ of $\bigl(\Z_{\geq1}\bigr)^3$ such that
$(i,j,k)\in Y_\lab$ if and only if $j\leq\la^{(k)}_i$. This is a finite set.

We call the representation $\mathcal M(u,K)$ the vacuum Macmahon representation.

In \cite{FFJMM2}, Theorem 3.4, the action of $\mathcal E$ was defined on the subspace
$\mathcal M^{(n)}_{\mathbf{a,b}}$
($\mathbf a=(a_1,\ldots,a_{n-1}),\mathbf b=(b_1,\ldots,b_{n-1})$)
of $\mathcal F(u_1)\otimes\mathcal F(u_2)\otimes\cdots\otimes\mathcal F(u_n)$ where
\begin{align}
u_{i+1}=u_iq_1^{-a_i}q_2q_3^{-b_i}\label{UU}
\end{align}
spanned by the vectors
\begin{align*}
|\lab^{(n)}\rangle=|\la^{(1)}\rangle\otimes\cdots\otimes|\la^{(n)}\rangle
\end{align*}
satisfying
\begin{align}
\la^{(k)}_i+a_k\geq\la^{(k+1)}_{i+b_k}.\label{ELEVATION}
\end{align}

The tensor product \eqref{INFTEN0} with the restriction \eqref{PP}
is the limit $n\to\infty$
of this construction in the case $\mathbf{a}=\mathbf{b}=\mathbf{0}$.
From the discussion in the previous subsection, the method for constructing the action
on the infinite tensor product based on the results in \cite{FFJMM2} is clear.
However, we must be careful on the definition of $\psi^\pm(z)$ since the level of the representation
$\mathcal M^{(n)}_{\mathbf{a,b}}$ is $(1,q_2^n)$, and the simple-minded limit $n\rightarrow\infty$ is not defined.
In fact, this is not a defect but here is a room for introducing an arbitrary parameter for the level.

We define the action of $\psi^\pm(z)$ by
\begin{align}
&\psi^\pm(z)|\lab\rangle=\psi_\lab(u/z)|\lab\rangle,\label{Psi1}\\
&\psi_\lab(u/z)=
\psi_{\boldsymbol\emptyset}(u/z)
\prod_{(i,j,k)\in{\textstyle Y_\lab}}\psi_{i,j,k}(u/z),\notag\\ 
&\psi_{\boldsymbol\emptyset}
(u/z)=\frac{1-Ku/z}{1-u/z},\notag \\ 
&\psi_{i,j,k}(u/z)
=\frac{(1-q_1^jq_2^{k-1}q_3^iu/z)(1-q_1^{j-1}q_2^kq_3^{i}u/z)(1-q_1^{j}q_2^kq_3^{i-1}u/z)}
{(1-q_1^{j-1}q_2^kq_3^{i-1}u/z)(1-q_1^{j-1}q_2^{k-1}q_3^iu/z)(1-q_1^jq_2^{k-1}q_3^{i-1}u/z)}.\label{Psi4}
\end{align}
Here $K$ is an arbitrary nonzero
parameter. The level of representation is $(1,K)$. It is easy to see that
the action of $\psi^\pm(z)$ is tame. In fact, the partition $\la^{(1)}$ can be read from
$\psi_\lab(u/z)$ by identifying its concave and convex corners recursively:
the rightmost concave corner $(i_1+1,j_1)=(1,\la^{(1)}_1+1)$ can be identified by
the pole coming from the factor $1-q_1^{\la^{(1)}_1}u/z=1-q_1^{j_1-1}q_3^{i_1}u/z$. Among the factors
of the form $(1-q_1^xu/z)$ in the denominator of $\psi_\lab(u/z)$, the one with
$x=\la^{(1)}_1$ is the largest in $x$.
Next, the rightmost convex corner $(i_2,j_1-1)$ can be identified
by the zero at $(1-q_1^{j_1-1}q_3^{i_2}u/z)$. 
Among the factors of the form $(1-q_1^{j_1-1}q_3^xu/z)$
in the numerator of $\psi_\lab(u/z)$,
the one with $x=i_2$ is the largest in $x$.
Similarly, one can identify the concave corner $(i_2+1,j_2)$, then the convex corner $(i_3,j_2-1)$,
etc., from the factors in $\psi_\lab(u/z)$. After identifying $\la^{(1)}$,
we divide $\psi^{(1)}(u/z)=\psi_\lab(u/z)$ by the factors corresponding to $\la^{(1)}$, and obtain new $\psi^{(2)}(u/z)$.
Then, one can determine $\la^{(2)}$ by the same procedure using this $\psi^{(2)}(u/z)$.
Continuing in this way, we can completely determine $\lab$ from $\psi_\lab(u/z)$.

The rational function $\psi_\lab(u/z)$ can be determined recursively.
Denote $\mub$ such that
$\mu^{(m)}=\la^{(m)}$ if $m\not=k$, and $\mu^{(k)}=\la^{(k)}\pm1_i$ by $\lab\pm1^{(k)}_i$.
Then we have
\begin{align*}
\psi_\lab(u/z)=\psi_{i,\la^{(k)}_i,k}(u/z)\psi_{\lab-1^{(k)}_i}(u/z).
\end{align*}

Let us compare $\psi_\lab(u/z)$ with
\begin{align}
\psi^{(k)}_\lab(u/z)=\prod_{m=1}^k\psi_{\la^{(m)}}(u_m/z),\quad u_m=uq_2^{m-1}.\label{PSIK}
\end{align}
For $N>\hskip-3pt>1$, we have
\begin{align}
\psi_\lab(u/z)=\psi^{(N)}_\lab(u/z)\frac{1-Ku/z}{1-q_2^Nu/z}.\label{LEVELK}
\end{align}
This is because for large $N$ we have the same recursion
\begin{align*}
\psi^{(N)}_\lab(u/z)=\psi_{i,\la^{(k)}_i,k}(u/z)\psi^{(N)}_{\lab-1^{(k)}_i}(u/z).
\end{align*}
Note that the structure of poles is
the same for $\psi^{(N)}_\lab(u/z)$ and $\psi_\lab(u/z)$
because the former (for large $N$) has a zero at $1-q_2^Nu/z=0$ and the latter at $1-Ku/z=0$.
This is important in the derivation of \eqref{rel4}. Namely, the position of delta functions appearing
in the right hand side of the equality
does not change by changing the rational function from $\psi^{(N)}_\lab(u/z)$ to $\psi_\lab(u/z)$.
It is also invariant in the left hand side because for large $N$ we have $\la^{(N)}=\emptyset$ and
$|\la^{(N)}\rangle=|\emptyset\rangle$ is the lowest
weight vector, which is killed by the action of $f(z)$.
Thus, we can establish the existence of
the representation on $\mathcal M(u,K)$ with the level $(1,K)$ and the lowest
weight
$\frac{1-Ku/z}{1-u/z}$.

For completeness we give the action of $e(z),f(z)$ on $\mathcal M(u,K)$.

The action of $e(z)$ on $|\lab\rangle$ is defined by
\begin{align}
e(z)|\lab\rangle=
\sum_{k=1}^\infty\psi^{(k-1)}_\lab(u/z)\sum_{i=1}^\infty
\psi_{\la^{(k)},i}\frac1{1-q_1}\delta(q_1^{\la^{(k)}_i}q_2^{k-1}q_3^{i-1}u/z)
|\lab+1^{(k)}_i\rangle.\label{EACT}
\end{align}
From \cite{FFJMM2} follows that for the finite tensor product
the delta function does not pick up poles of $\psi^{(k-1)}_\lab(u/z)$, and does
pick up a zero if and only
if $\mub=\lab+1^{(k)}_i$ breaks the condition $\mu^{(k-1)}_i\geq\mu^{(k)}_i$.
Here we give a simple proof of these statements using \eqref{NORMALIZATION}.

Set $j=\la^{(k)}_i+1$. Suppose that
\begin{align*}
(i,j)\in CC(\la^{(k)}).
\end{align*}
From \eqref{NORMALIZATION} we see that
the function $\psi^{(k-1)}_\lab(u/z)$ has a pole at $q_1^{j-1}q_2^{k-1}q_3^{i-1}u/z=1$
only if for some $m\leq k-1$, there exists a box $(\bar i,\bar j)$ such that
\begin{align*}
(\bar i,\bar j)\in CC(\la^{(m)})\sqcup CV(\la^{(m)}) 
\end{align*}
and
\begin{align*}
q_1^{j-1}q_2^{k-1}q_3^{i-1}=q_1^{\bar j-1}q_2^{m-1}q_3^{\bar i-1}.
\end{align*}
The latter implies
\begin{align*}
\bar i=i+m-k<i,\ \bar j=j+m-k<j.
\end{align*}
This is a contradiction with $\la^{(m)}_l\geq\la^{(k)}_l$. We have shown the 
statement about the 
poles.

Let us show the statement about the zeros.  
A zero occurs only if either
\begin{align*}
(\bar i,\bar j)\in CC(\la^{(m)})\quad \text{ and }\quad
q_1^{j-1}q_2^{k-1}q_3^{i-1}=q_1^{\bar j-2}q_2^{m-1}q_3^{\bar i-2}
\end{align*}
or
\begin{align*}
(\bar i,\bar j)\in CV(\la^{(m)})\quad \text{ and }\quad
q_1^{j-1}q_2^{k-1}q_3^{i-1}=q_1^{\bar j}q_2^{m-1}q_3^{\bar i}.
\end{align*}
The former case really occurs when
the condition $\mu^{(k-1)}_i\geq\mu^{(k)}_i$ is broken, while the latter  leads to a contradiction.

A box $(i,j,k)$ is called a concave (resp., convex) corner of $Y_\lab$ if
\begin{align*}
&(i,j,k)\not\in Y_\lab\ \text{ (resp., $(i,j,k)\in Y_\lab$)}
\end{align*}
and
\begin{align*}
&(i-1,j,k),(i,j-1,k),(i,j,k-1)\in Y_\lab\\[7pt]
&\qquad\text{ (resp., $(i+1,j,k),(i,j+1,k),(i,j,k+1)\not\in Y_\lab$)}. 
\end{align*}
We denote by $CC(Y_\lab)$ (resp., $CV(Y_\lab)$) the set of concave (resp., convex) corners of $\lab$.
They are finite sets.
The action of $e(z)$ adds a box at each concave corner (see \eqref{PSILA}, \eqref{PSIK}):
\begin{align}
&e(z)|\lab\rangle=\sum_{(i,j,k)\in
{\scriptstyle CC(Y_\lab)}}\psi_{\lab,i,j,k}
\psi_{\la^{(k)},i}\frac1{1-q_1}\delta(q_1^jq_2^kq_3^iu/z)|\lab+1^{(k)}_i\rangle,\label{EACTION}\\
&\psi_{\lab,i,j,k}=\psi^{(k-1)}_\lab(q_1^{-j}q_2^{-k}q_3^{-i}).\label{PSIIJK}
\end{align}
Similarly, we have the formula for the action of $f(z)$ (see \eqref{PSILAP}).
\begin{align*}
&f(z)|\lab\rangle=\sum_{(i,j,k)\in
{\scriptstyle CV(Y_\lab)}}
\psi'_{\lab,i,j,k}
\psi'_{\la^{(k)},i}\frac{q_1}{1-q_1}\delta(q_1^jq_2^kq_3^iu/z)|\lab-1^{(k)}_i\rangle,\\
&\psi'_{\lab,i,j,k}=\psi'_\lab\hskip-3pt{^{(k+1)}}(q_1^{-j}q_2^{-k}q_3^{-i}),\\
&\psi'_\lab\hskip-3pt{^{(k)}}(u/z)=\lim_{N\rightarrow\infty}
\prod_{m=k}^N\psi_{\la^{(m)}}(q_2^{m-1}u/z)
\times
\frac{1-Ku/z}{1-q_2^Nu/z}.
\end{align*}
As we discussed $\psi'_{\la^{(k)},i}$ (see \eqref{PSIPRIME}) has no pole,
and it has a zero if and only if $\mub=\lab-1^{(k)}_i$ breaks the condition for the plane partitions.
The discussion for poles and zeros of $\psi'_{\lab,i,j,k}$ is exactly the same as $\psi_{\lab,i,j,k}$ for $e(z)$.
\subsection{Macmahon modules with non-trivial boundary conditions}
In this subsection we generalize the Macmahon representation to the case where
the plane partitions have non-trivial boundary conditions.
We repeat the semi-infinite tensor product construction.
We remove the restriction $\mathbf{a}=\mathbf{b}=\mathbf{0}$
in \eqref{ELEVATION}, and also remove the condition $\la^{(N)}=\emptyset$ for large $N$.

It is convenient to use another notation. 
Consider a set of three partitions $\al=(\al_1,\al_2,\ldots,0,\ldots)$, $\beta=(\beta_1,\beta_2,\ldots,0,\ldots)$,
$\gamma=(\gamma_1,\gamma_2,\ldots,0,\ldots)$. We call a sequence $\mu=(\mu_1,\mu_2,\mu_3,\ldots)$ where
$\mu_i\in\Z_{\geq0}\sqcup\{\infty\}$ a generalized partition if and only if $\mu_i\geq\mu_{i+1}$ holds for all $i\geq1$.
A sequence of generalized partitions $\mub=\{\mu^{(k)}\}_{k\geq1}$ is called a plane partition
with the boundary conditions
$(\al,\beta,\gamma)$ if and only if the following conditions hold.
\begin{align}
&\mu^{(k)}_i\geq\mu^{(k+1)}_i,\label{MU1}\\
&\lim_{i\rightarrow\infty}\mu^{(k)}_i=\al_k,\notag \\ 
&\mu^{(k)}_i=\infty\ \text{ if $1\leq i\leq \beta_k$}, \notag \\ 
&\lim_{k\rightarrow\infty}\mu^{(k)}_i=\gamma_i.\notag 
\end{align}
We denote by $\mathcal P[\al,\beta,\gamma]$ the set of $\mub$ satisfying these conditions.
For each $\mub$ we define a subset $Y_\mub\subset\bigl(\Z_{\geq1}\bigr)^3$ by
\begin{align}
(i,j,k)\in Y_\mub\leftrightarrow j\leq\mu^{(k)}_i.\label{Y3}
\end{align}
This definition is a generalization of $Y_\lab$ when $\al=\beta=\gamma=\emptyset$.
A new feature 
is that $Y_\mub$ can be an infinite set.
If $\al$ is non-zero for $\mub$, then $Y_\mub$ has an elevation in the $i$-axis.
Similarly, if $\beta$ (resp., $\gamma$) is non-zero, an elevation in the $j$-axis (resp., $k$-axis)
(see Figure \ref{PPELEVATION}).

Plane partitions $\mub$ with the boundary conditions $(\al,\beta,\ga)$
are in one-to-one correspondence with
sets of partitions
$\lab=(\la^{(1)},\la^{(2)},\la^{(3)},\ldots)$.

Set
\begin{align}
&a_k=\al_k-\al_{k+1},\ b_k=\beta_k-\beta_{k+1},\ c_k=\gamma_k,\label{ABC}\\
&\la^{(k)}_i=\mu^{(k)}_{i+\beta_k}-\al_k.\label{LAMU}
\end{align}
The condition \eqref{MU1} for $\mub$ and the condition \eqref{ELEVATION}
for $\lab$ are equivalent through \eqref{ABC} and \eqref{LAMU}.

We fix the parameter $u_i$ as
\begin{align*}
u_i=uq_1^{\al_i}q_2^{i-1}q_3^{\beta_i},
\end{align*}
which implies \eqref{UU}. 
When we discuss the tensor product we use 
\begin{align*}
|\lab\rangle=|\la^{(1)}\rangle\otimes|\la^{(2)}\rangle\otimes|\la^{(3)}\rangle\otimes\cdots
\subset\mathcal F(u_1)\otimes\mathcal F(u_2)\otimes\mathcal 
F(u_3)\otimes \cdots\,,
\end{align*}
and when we discuss the plane partition
we use $Y_\mub$. We show this correspondence by denoting $\lab=\lab_\mub$ when it is necessary.

Consider a linear
subspace of the semi-infinite tensor product 
\begin{align}
&\mathcal M_{\al,\beta,\gamma}\subset\mathcal F(u_1)\otimes\mathcal 
F(u_2)\otimes \cdots\,.
\label{INFTEN}
\end{align}
By definition the space $\mathcal M_{\al,\beta,\gamma}$ is spanned by $|\lab\rangle$
where $\lab=(\la^{(1)},\la^{(2)},\la^{(3)},\ldots)$ is a sequence of partitions satisfying
\eqref{ELEVATION} and the boundary condition.
\begin{align}
&\lim_{k\rightarrow\infty}\la^{(k)}_i=\gamma_i.
\end{align}
The construction of representation with basis $|\lab\rangle$ can be done
by using the result on the finite tensor product, \cite{FFJMM2}, Theorem 3.4.

We consider $|\la^{(i)}\rangle$ as an element of $\mathcal F(u_i)$,
and identify $|\lab\rangle$ with
\begin{align*}
|\la^{(1)}\rangle\otimes|\la^{(2)}\rangle\otimes\cdots\otimes|\la^{(N)}\rangle\in
\mathcal F(u_1)\otimes\mathcal F(u_2)\otimes \cdots\otimes\mathcal F(u_N)
\end{align*}
for large enough $N$. Then, the action of $e(z)$ on $\mathcal M_{\al,\beta,\gamma}$
is the same as in the finite tensor product. 
In the below let us describe the action of $\psi^\pm(z)$ and $f(z)$.

\begin{figure}
\begin{picture}(30,290)(200,-20)

\put(200,235){\line(1,-1){30}}
\put(200,235){\line(-1,-1){40}}
\put(160,195){\line(1,-1){10}}
\put(230,205){\line(-1,-1){10}}
\put(170,185){\line(1,1){20}}
\put(220,195){\line(-1,1){10}}
\put(190,205){\line(1,-1){10}}
\put(210,205){\line(-1,-1){10}}

\put(100,30){\line(0,1){30}}
\put(100,30){\line(1,-1){30}}
\put(100,60){\line(1,-1){10}}
\put(130,0){\line(0,1){15}}
\put(130,15){\line(-1,1){20}}
\put(110,50){\line(0,-1){15}}

\put(300,30){\line(0,1){60}}
\put(300,30){\line(-1,-1){40}}
\put(300,90){\line(-1,-1){10}}
\put(290,80){\line(0,-1){30}}
\put(290,50){\line(-1,-1){20}}
\put(270,30){\line(0,-1){15}}
\put(270,15){\line(-1,-1){10}}
\put(260,5){\line(0,-1){15}}

\put(160,195){\line(0,-1){75}}
\put(100,60){\line(1,1){60}}

\put(170,185){\line(0,-1){75}}
\put(110,50){\line(1,1){60}}

\put(190,205){\line(0,-1){75}}
\put(110,35){\line(1,1){70}}

\put(260,-10){\line(-1,1){70}}
\put(130,0){\line(1,1){60}}

\put(260,5){\line(-1,1){70}}
\put(130,15){\line(1,1){60}}

\put(300,90){\line(-1,1){70}}
\put(230,205){\line(0,-1){45}}

\put(290,80){\line(-1,1){70}}
\put(220,195){\line(0,-1){45}}

\put(290,50){\line(-1,1){80}}
\put(210,205){\line(0,-1){75}}

\put(270,30){\line(-1,1){90}}
\put(200,195){\line(0,-1){75}}

\put(270,15){\line(-1,1){90}}

\put(180,105){\line(0,1){15}}
\put(180,120){\line(1,1){10}}

\put(200,120){\line(1,1){10}}
\put(200,120){\line(-1,1){10}}

\put(190,60){\line(0,1){15}}
\put(220,150){\line(1,1){10}}
\put(170,110){\line(-1,1){10}}

\put(195,212){$\gamma$}
\put(110,22){$\al$}
\put(283,27){$\beta$}

\put(100,200){$Y_\omegab$}
\put(47,5){$i,q_3$}
\put(333,5){$j,q_1$}
\put(205,260){$k,q_2$}
\thicklines
\put(200,235){\vector(0,1){30}}
\put(300,30){\vector(1,-1){30}}
\put(100,30){\vector(-1,-1){30}}
\end{picture}
\caption{Plane partition with the boundary condition
$(\alpha,\beta,\gamma)$. The 
diagram corresponding to the minimal plane partition $\omega$ \eqref{OMEGA} is shown.
}
\end{figure}
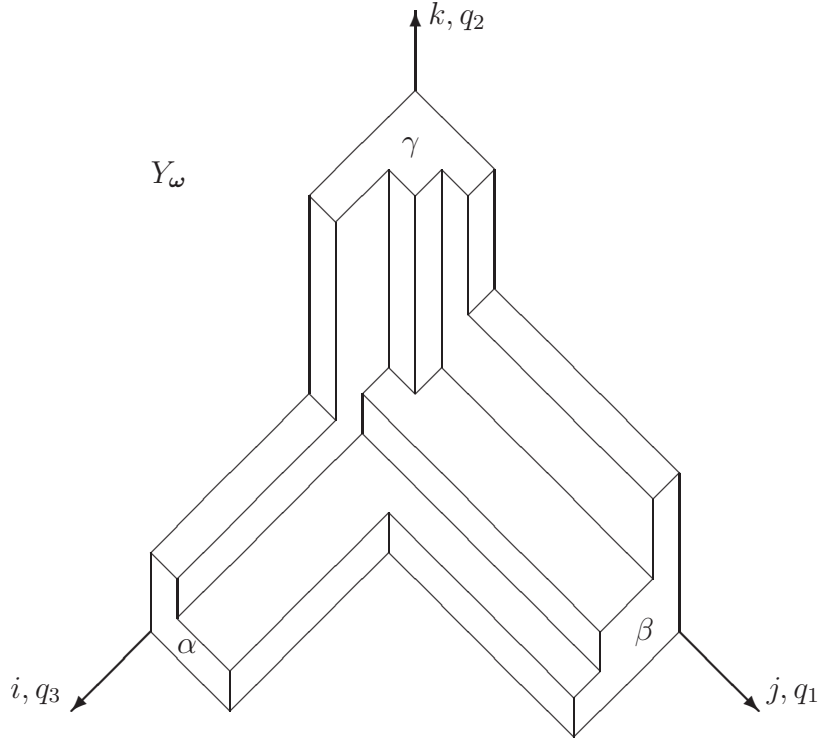
\label{PPELEVATION}

\medskip

We describe the action of $e(z)$.
The action of $e(z)$ adds a box at each concave corner as before
(see \eqref{EACTION}, \eqref{PSIIJK}):
\begin{align*}
&e(z)|\lab\rangle=\sum_{(i,j,k)\in
{\scriptstyle CC(Y_\mub)}}
\psi_{\lab,i,j,k}
\psi_{\la^{(k)},i-\beta_k}\frac1{1-q_1}\delta(q_1^jq_2^kq_3^iu/z)|\lab+1^{(k)}_{i-\beta_k}\rangle.
\end{align*}

Let us discuss the well-definedness of this action.
This point was discussed in the previous subsection in the case of $\al=\beta=\gamma=\emptyset$.
The argument is the same in the
general case, but $\mub$ must be used instead of $\lab$ because
the structure of plane partitions is respected by $\mub$, not by $\lab$ (see \eqref{Y3}).

Recall \eqref{PSI}, and change \eqref{PSIK} to
\begin{align*}
\psi^{(k)}_\lab(u/z)=\prod_{m=1}^k\psi_{\la^{(m)}}(u_m/z),\quad
u_m=q_1^{\al_m}q_2^{m-1}q_3^{\beta_m}u.
\end{align*}
Then using \eqref{LAMU} we obtain
\begin{align}
\psi_{\la^{(m)}}(u_m/z)=\psi_{\mu^{(m)}}(q_2^{m-1}u/z),\label{LAMU2}
\end{align}
where we understand $q_1^\infty=0$. 
Thus, the action of $e(z)$ 
takes the same form as 
in the vacuum case wherein $\lab$ is replaced by $\mub$.  

Define the action of $\psi^\pm(z)$ on
$\mathcal M_{\al,\beta,\gamma}=\mathcal M_{\al,\beta,\gamma}(u,K)$ 
by setting
\begin{align}
&\psi_\lab(u/z)=\frac{1-Ku/z}{1-q_1^{\la^{(1)}_1}u_1/z}\prod_{i=1}^\infty
\frac{1-q_1^{\la^{(i)}_1}q_2u_i/z}{1-q_1^{\la^{(i+1)}_1}u_{i+1}/z}\label{PSIABG}\\
&\times\prod_{j=1}^\infty\frac{1-q_1^{\la^{(1)}_j}q_3^ju_1/z}{1-q_1^{\la^{(1)}_{j+1}}q_3^ju_1/z}
\prod_{i=1}^\infty\prod_{j=1}^\infty
\frac{(1-q_1^{\la^{(i+1)}_j}q_3^ju_{i+1}/z)(1-q_1^{\la^{(i)}_{j+1}}q_2q_3^ju_i/z)}
{(1-q_1^{\la^{(i+1)}_{j+1}}q_3^ju_{i+1}/z)(1-q_1^{\la^{(i)}_j}q_2q_3^ju_i/z)}\nn
\end{align}
in the formula \eqref{Psi1}. This is a finite product. This expression follows from the formal infinite product
\begin{align*}
\psi_\lab(u)=\prod_{i=1}^\infty\psi_{\la^{(i)}}(u_i/z)
\end{align*}
by substituting \eqref{PSI}, and modifying it as we did in \eqref{PSI} and \eqref{LEVELK}.

The function $\psi_\lab(u/z)$, in general, can be better understood
in terms of $Y_\mub$. Let $(i,j,k)\in CC(Y_\mub)$ be a concave corner. Then, adding one box at $(i,j,k)$ to
$Y_\mub$ corresponds to changing $\lab$ to $\lab+1^{(k)}_{i-\beta_k}$,
where $\la^{(k)}_{i-\beta_k}+\al_k+1=j$. From \eqref{LAMU}, 
this relation can be rewritten as
\begin{align*}
\mu^{(k)}_i+1=j.
\end{align*}
Using \eqref{Psi4}, \eqref{PSIABG} we obtain
\begin{align*}
\psi_{\lab+1^{(k)}_{i-\beta_k}}(u/z)=\psi_{i,j,k}(u/z)\psi_\lab(u/z).
\end{align*}
Using \eqref{LAMU2}, for $\lab=\lab_\mub$, we have 
\begin{align}
\psi_\lab(u/z)=\frac{1-Ku/z}{1-u/z}\prod_{(i,j,k)\in Y_\mub}\psi_{i,j,k}(u/z).\label{PSIMAC}
\end{align}
Here the infinite product is defined as follows. Set
\begin{align*}
Y^{(N)}_\mub=\{(i,j,k)\in Y_\mub|i,j,k\leq N\},
\end{align*}
define $\psi^{(N)}_\lab(u/z)$ by \eqref{PSIMAC} with $Y_\mub$ replaced by $Y^{(N)}_\mub$.
For large $N$ the difference between $\psi^{(N)}_\lab(u/z)$ and $\psi^{(N+1)}_\lab(u/z)$
consists only of $N$ dependent factors which 
come
from the $\psi_{i,j,k}(u/z)$ such that
$i\sim N$ or $j\sim N$ or $k\sim N$. We define the infinite product by removing these factors
from $\psi^{(N)}_\lab(u/z)$. By the definition it is independent of $N$. 
Once we define the infinite product in this way, the equality \eqref{PSIMAC}
is clear from \eqref{LAMU2} for $\gamma=\emptyset$. We discuss the case $\gamma\not=\emptyset$
at the end of this subsection.

In fact, it is possible to rewrite the infinite product as a finite product.
We will do it later in Section \ref{SHELL}.
Here we remark that each cube in $Y_\mub$ contributes 
to
poles and zeros through eight
corners of the cube:
poles from $(i,j,k),(i-1,j-1,k),(i-1,j,k-1),(i,j-1,k-1)$ and zeros from $(i-1,j-1,k-1),(i-1,j,k),(i,j-1,k),(i,j,k-1)$;
two of them, $(i,j,k)$ and $(i-1,j-1,k-1)$, cancel each other because of the restriction $q_1q_2q_3=1$.
From this follows that the $\psi^\pm(z)$ action enjoys the $\mathfrak{S}_3$ symmetry
\begin{align}
(i,j,k)\leftrightarrow (j,i,k)
&\Leftrightarrow\ (q_1,q_2,q_3)\leftrightarrow (q_3,q_2,q_1),\label{QSYM13}\\
(i,j,k)\leftrightarrow (k,j,i)
&\Leftrightarrow\ (q_1,q_2,q_3)\leftrightarrow (q_1,q_3,q_2).\label{QSYM23}
\end{align}
The first line means the following. If we transform $Y_\mub$ where
$\mub\in\mathcal P[\al,\beta,\gamma]$, by the involution $(i,j,k)\leftrightarrow (j,i,k)$,
we obtain $Y_{\tilde\mub}$ where $\tilde\mub\in\mathcal P[\beta,\al,\gamma']$.
Set $\lab=\lab_\mub$, and $\tilde\lab=\lab_{\tilde\mub}$.
Then, we have the equality
\begin{align*}
\psi_\la(u/z)\,|_{(q_1,q_2,q_3)\rightarrow (q_3,q_2,q_1)}=\psi_{\tilde\lab}(u/z).
\end{align*}
Similarly, from the involution $(i,j,k)\leftrightarrow (j,i,k)$, we have
\begin{align*}
\psi_\la(u/z)\,|_{(q_1,q_2,q_3)\rightarrow (q_1,q_3,q_2)}=\psi_{\tilde{\tilde\lab}}(u/z),
\end{align*}
where $\tilde{\tilde\lab}=\lab_{\tilde{\tilde\mub}}$ and
$\tilde{\tilde\mub}\in\mathcal P[\gamma,\beta',\al]$.

Define a plane partition
$\omegab=\{\omega^{(k)}\}_{k\geq1}$ 
with the boundary condition $(\al,\beta,\gamma)$
by
\begin{align}
\omega^{(k)}_i=\begin{cases}
\infty&\text{ if $i\leq\beta_k$;}\\
\max(\gamma_i,\al_k)&\text{ otherwise.}\label{OMEGA}
\end{cases}
\end{align}
Then we have
\begin{align*}
\omega^{(k)}_i=
\begin{cases}
\omega^{(k)}_{i+1}&\text{ if $\omega^{(k)}_i=\al_k$};\\
\omega^{(k+1)}_i&\text{ if $\omega^{(k)}_i=\gamma_i$}.
\end{cases}
\end{align*}
Among all $\mub\in{\mathcal P}[\al,\beta,\gamma]$,  $Y_\omegab\subset Y_\mub$ is the minimum.
There is no convex corner in $Y_\omegab$. See Figure \ref{PPELEVATION}.
The set of partitions
$\la=\lab_\omegab$ associated with $\boldsymbol\omega$ is given by
\begin{align*}
\la^{(k)}_i=\max(\gamma_{i+\beta_k},\al_k)-\al_k. 
\end{align*}

Let us compute a few examples
of the eigenvalues \eqref{PSIABG} for $\lab=\lab_\omegab$.
\begin{align*}
\begin{matrix}
\al&\beta&\gamma&\psi_\lab(u/z)\\[3pt]
\emptyset&\emptyset&\emptyset&\frac{1-Ku/z}{1-u/z}\\[7pt]
\{1\}&\emptyset&\emptyset&
\frac{(1-K u/z) (1-q_1 q_2
   u/z)}{(1-q_1 u/z) (1-q_2 u/z)}\\[7pt]
   \{2\}&\emptyset&\emptyset&
   \frac{(1-K u/z) \left(1-q_1^2 q_2
   u/z\right)}{\left(1-q_1^2 u/z\right)
   (1-q_2 u/z)}\\[7pt]
\{1\}&\{1\}&\emptyset&
\frac{(1-K u/z) (1-q_1 q_2 q_3
   u/z)}{(1-q_2 u/z) (1-q_1 q_3
   u/z)}\\[7pt]
   \{1\}&  \{1\}&\{1\}&  
   \frac{(1-K u/z) (1-q_1 q_2 q_3
   u/z)^2}{(1-q_1 q_2 u/z) (1-q_1
   q_3 u/z) (1-q_2 q_3 u/z)}
      \end{matrix}
\end{align*}

Finally, we give the action of $f(z)$ on 
$\mathcal M_{\al,\beta,\gamma}(u,K)$:
\begin{align}
&f(z)|\lab\rangle=\sum_{(i,j,k)\in
{\scriptstyle CV(Y_\mub)}}
\psi'_{\lab,i,j,k}
\psi'_{\la^{(k)},i-\beta_k}\frac{q_1}{1-q_1}\delta(q_1^jq_2^kq_3^iu/z)|\lab-1^{(k)}_{i-\beta_k}\rangle,\nn\\
&\psi'_{\lab,i,j,k}=\psi'_\lab\hskip-3pt{^{(k+1)}}(q_1^{-j}q_2^{-k}q_3^{-i}),\nn\\
&\psi'_\lab\hskip-3pt{^{(k)}}(u/z)=\lim_{N\rightarrow\infty}
\prod_{m=k}^N\psi_{\la^{(m)}}(q_2^{m-1}u/z)
\cdot\frac{1-Ku/z}{1-q_1^{\gamma_1}q_2^Nu/z}
\prod_{j=1}^\infty\frac{1-q_1^{\gamma_j}q_2^Nq_3^ju/z}{1-q_1^{\gamma_{j+1}}q_2^Nq_3^ju/z}.\label{KFACTOR}
\end{align}
We do not repeat the argument which assures the well-definedness of this action.
However, we note that the multiplication of the last infinite product is in fact 
a
finite product
corresponding to the convex corners of $\gamma$ and it removes the extra poles and zeros
in the finite tensor product which do not occur in the semi-infinite product.
It is also important to notice that
\begin{align}
\psi_\lab(u/z)=\psi'_\lab\hskip-3pt{^{(1)}}(u/z),
\end{align}
where the left hand side is given by \eqref{PSIABG} and the right hand side by \eqref{KFACTOR}.
From this the equality \eqref{PSIMAC} for non-trivial $\gamma$ follows.

We summarize the result in this and the previous subsections as
\begin{thm}
There is an action of the algebra $\mathcal E$ on 
$\mathcal M_{\al,\beta,\gamma}(u,K)$
induced from the infinite tensor product \eqref{INFTEN}. This is an irreducible, quasifinite
and tame representation.
The level is $(1,K)$ with a generic parameter $K$.
The lowest
weight is given by \Ref{PSIABG}, $($see also \eqref{FINALPSI} below$)$ 
with $\mub=\omegab$ given by \eqref{OMEGA}.
The representations thus obtained admit the $\mathfrak S_3$ symmetry \eqref{QSYM13}, \eqref{QSYM23}.
\end{thm}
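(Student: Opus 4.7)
The plan is to assemble the construction developed earlier in this section into a proof by (i) verifying well-definedness of the action on the semi-infinite tensor product subspace, (ii) checking the defining relations \eqref{rel1}--\eqref{rel5} of $\mathcal{E}$, and (iii) deducing the listed structural properties. First, I would note that on any vector $|\lab\rangle \in \mathcal M_{\al,\beta,\gamma}(u,K)$, the sums expressing $e(z)|\lab\rangle$ and $f(z)|\lab\rangle$ are finite because $CC(Y_\mub)$ and $CV(Y_\mub)$ are finite sets: the symmetric difference of $Y_\mub$ and the minimal diagram $Y_\omegab$ is finite by the boundary conditions. The operator $\psi^\pm(z)$ is defined via the finite-product formula \eqref{PSIABG}, which agrees with the formal infinite product after removal of $N$-dependent factors as described before \eqref{PSIMAC}.

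The main verification is of the relations. Relation \eqref{rel1} is immediate because $\psi^\pm(z)$ act diagonally. Relations \eqref{rel2} and \eqref{rel4} reduce to rational-function identities that can be checked inductively on $|Y_\mub \setminus Y_\omegab|$ using the one-box recursion $\psi_{\lab + 1^{(k)}_{i - \beta_k}}(u/z) = \psi_{i,j,k}(u/z) \psi_\lab(u/z)$ combined with the single-box identity relating $g(z,w) \psi_{i,j,k}(u/z)$ to $g(w,z)$ times the corresponding factor at $w$; the Serre-type relations \eqref{rel5} follow from the finite tensor product case of \cite{FFJMM2}. The principal obstacle is relation \eqref{rel3}. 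For the finite tensor product of $n$ Fock modules, this is established in \cite{FFJMM2}, Theorem 3.4. Passing to the semi-infinite limit, $\psi^\pm(z)$ and $f(z)$ are modified by the factor relating $\psi_\lab(u/z)$ to the truncated $\psi_\lab^{(N)}(u/z)$ --- in the vacuum case this is the factor in \eqref{LEVELK}, and its generalization to nontrivial $\gamma$ appears in \eqref{KFACTOR}. The key point is that this modification factor has neither poles nor zeros at any spectral value $q_1^j q_2^k q_3^i u^{-1}$ corresponding to $(i,j,k) \in CC(Y_\mub) \cup CV(Y_\mub)$, so the delta functions on both sides of \eqref{rel3} survive the limit and their residues receive identical multiplicative corrections. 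For generic $K$, the moved pole at $1 - Ku/z = 0$ does not coincide with any physical spectral value and therefore produces no spurious delta function.

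The remaining claims follow quickly. The minimal plane partition $\omegab$ has no convex corners, so $f(z)|\omegab\rangle = 0$, making $|\omegab\rangle$ a lowest weight vector with eigenvalue given by specializing \eqref{PSIABG} at $\lab = \lab_\omegab$. Tameness is the recursive corner-by-corner recovery of $\mub$ from $\psi_\lab(u/z)$ carried out in the vacuum case, which extends verbatim to general boundary conditions with $\mub$ in place of $\lab$. For generic $K$, all matrix coefficients $\psi_{\lab,i,j,k}$ and $\psi'_{\lab,i,j,k}$ are nonzero rational functions of $K$, so every $|\lab\rangle$ can be reached from $|\omegab\rangle$ by successive $e$-operators filling in the finite difference $Y_\mub \setminus Y_\omegab$, which yields irreducibility together with cyclicity of $|\omegab\rangle$. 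Quasi-finiteness follows from the $\Z$-grading by $|Y_\mub \setminus Y_\omegab|$, each graded component being indexed by a finite set of plane partitions. Finally, the $\mathfrak{S}_3$-symmetry \eqref{QSYM13}, \eqref{QSYM23} is manifest from the product form \eqref{PSIMAC}: permuting the triple $(i,j,k)$ simultaneously with $(q_1,q_2,q_3)$ leaves each factor $\psi_{i,j,k}(u/z)$ invariant, so the lowest weight corresponds under the permutation to that of $\mathcal M_{\sigma(\al,\beta,\gamma)}(u,K)$, and the isomorphism then follows from uniqueness (Theorem \ref{highest weight thm}).
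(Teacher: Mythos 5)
Your proposal assembles the construction in essentially the same way the paper does: inherit the well-definedness of the action on a finite tensor product from \cite{FFJMM2}, pass to the semi-infinite limit by introducing the level-modification factor relating $\psi^{(N)}_\lab$ to $\psi_\lab$, and then read off the structural properties (lowest weight from the corner-free shape of $Y_\omegab$, tameness from recursive corner recovery, irreducibility from nonvanishing corner coefficients for generic $K$, quasifiniteness from the $\Z$-grading, and the $\mathfrak S_3$ symmetry from the $(i,j,k)\leftrightarrow(q_1,q_2,q_3)$-invariance of $\psi_{i,j,k}$).

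Two small inaccuracies, neither fatal: the coefficient $\psi_{\lab,i,j,k}$ for the $e$-action, being $\psi^{(k-1)}_\lab$ evaluated at a spectral point, does not depend on $K$ at all (only $\psi'_{\lab,i,j,k}$ does, through \eqref{KFACTOR}), so describing both as ``nonzero rational functions of $K$'' is misstated even though the needed nonvanishing still holds; and the irreducibility argument should emphasize, in addition to cyclicity under $e$-operators, that one can descend from any $\ket{\lab}$ back to $\ket{\omegab}$ via $f$-operators with nonzero coefficients (for generic $K$), since that descent, combined with tameness, is what lets any nonzero submodule contain $\ket{\omegab}$. Also note that \eqref{rel4} holds automatically in the limit because the $e$-action is literally unchanged from the finite tensor product; your proposed induction is sound but unnecessary for that relation.
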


\subsection{Shell formula for the action of $\psi^\pm(z)$}\label{SHELL}

For the Fock representation the rational function $\psi_\la(u/z)$ is factorized into the contribution from
the concave and convex corners. Let us derive the three dimensional version of this statement
for the Macmahon representations. It follows from \eqref{PSIMAC}.

Define an auxiliary object
\begin{align*}
\Psi_\lab(u/z)=\prod_{(i,j,k)\in Y_\mub}\psi_{i,j,k}(u/z),
\end{align*}
where we consider $q_1,q_2,q_3$ as free, i.e., we do not require $q_1q_2q_3=1$.
Once we obtain $\Psi_\lab(u/z)$ as a 
finite product we get $\psi_\lab(u/z)$
by
\begin{align}
\psi_\lab(u/z)=\frac{1-Ku/z}{1-u/z}\Psi_\lab(u/z),\label{PSISHELL}
\end{align}
where $q_1q_2q_3=1$ is imposed.
Let us define the shell of $Y_\mub$ by
\begin{align*}
&\mathcal S_\mub=\{(i,j,k)\in\Z^3\,|\, i,j,k\geq0,\\
&(i+1,j+1,k+1)\not\in Y_\mub,\\
&\{(i,j,k),(i+1,j,k),(i,j+1,k),(i,j,k+1),\\&\quad
(i+1,j+1,k),(i+1,j,k+1),(i,j+1,k+1)\}
\cap Y_\mub\not=\emptyset\}.
\end{align*}
For example
\begin{align*}
\begin{matrix}
Y_\mub&\mathcal S_\mub\\[5pt]
\{\ \}&\{\ \}\\[5pt]
\{(1,1,1)\}&\{(0,0,1),(0,1,0),(1,0,0),(0,1,1),(1,0,1),(1,1,0),(1,1,1)\}
\end{matrix}
\end{align*}
The rational function $\Psi_\lab(u/z)$ has neither 
a pole nor a zero
at $1-q_1^jq_2^kq_3^iu/z=0$ unless $(i,j,k)\in\mathcal S_\mub$. 
It is also worth noting that
for a fixed $(i,j,k)$ the intersection of $\{i+n,j+n,k+n)\,|\,n\in\Z\}$ with $\mathcal S_\mub$ is at most
one point.

We classify the points in the shell $\mathcal S_\mub$ into $\mathcal S^{(n)}_\mub$ ($-1\leq n\leq2$) where
\begin{align*}
\mathcal S^{(n)}_\mub=\{(i,j,k)\in\mathcal S_\mub\,|\,
\Psi_\lab(u/z) \text{ has a zero of order $n$ at $1-q_1^jq_2^kq_3^iu/z=0$}\}.
\end{align*}
For $(i,j,k)\in\mathcal S_\mub$ and $\varepsilon_1,\varepsilon_2,\varepsilon_3=0,1$, we define
\begin{align*}
A_{i,j,k}(\varepsilon_1,\varepsilon_2,\varepsilon_3)=
\begin{cases}
1&\text{ if $(i+\varepsilon_1,j+\varepsilon_2,k+\varepsilon_3)\in Y_\mub$};\\
0&\text{ otherwise}.
\end{cases}
\end{align*}
According as the set of values given in the form of two matrices
\begin{align*}
&T_{i,j,k}=\begin{pmatrix}A_{i,j,k}(0,1,0)&A_{i,j,k}(1,1,0)\\A_{i,j,k}(0,1,1)&A_{i,j,k}(1,1,1)\end{pmatrix},\\
&B_{i,j,k}=\begin{pmatrix}A_{i,j,k}(0,0,0)&A_{i,j,k}(1,0,0)\\A_{i,j,k}(0,0,1)&A_{i,j,k}(1,0,1)\end{pmatrix},
\end{align*}
the order of zero at $1-q_1^jq_2^kq_3^iu/z=0$ of the rational function $\Psi_\lab(u/z)$ is determined:
\begin{align*}
\begin{matrix}
T_{i,j,k}
&\begin{pmatrix}1&1\\1&0\end{pmatrix}
&\begin{pmatrix}1&0\\0&0\end{pmatrix}
&\begin{pmatrix}1&1\\0&0\end{pmatrix}
&\begin{pmatrix}1&0\\1&0\end{pmatrix}
&\begin{pmatrix}1&0\\0&0\end{pmatrix}\\
\\
B_{i,j,k}
&\begin{pmatrix}1&1\\1&1\end{pmatrix}
&\begin{pmatrix}1&1\\1&1\end{pmatrix}
&\begin{pmatrix}1&1\\1&0\end{pmatrix}
&\begin{pmatrix}1&1\\1&0\end{pmatrix}
&\begin{pmatrix}1&1\\1&0\end{pmatrix}\\
\\
\text{order of zero}&-1&1&1&1&2\\
\end{matrix}
\end{align*}
\begin{align*}
\begin{matrix}
T_{i,j,k}
&\begin{pmatrix}1&0\\0&0\end{pmatrix}
&\begin{pmatrix}0&0\\0&0\end{pmatrix}
&\begin{pmatrix}0&0\\0&0\end{pmatrix}
&\begin{pmatrix}1&0\\0&0\end{pmatrix}\\
\\
B_{i,j,k}
&\begin{pmatrix}1&0\\1&0\end{pmatrix}
&\begin{pmatrix}1&0\\0&0\end{pmatrix}
&\begin{pmatrix}1&1\\1&0\end{pmatrix}
&\begin{pmatrix}1&1\\0&0\end{pmatrix}\\
\\
\text{order of zero}&1&1&1&-1\\
\end{matrix}\\
\end{align*}
The cases not listed in this table are neither poles nor zeros.

For any $\al,\beta,\gamma$
and $\mub\in\mathcal P[\al,\beta,\gamma]$ the union of $\mathcal S^{(a)}_\mub$ $(a=-1,1,2)$
is finite. Thus, the formula \eqref{PSISHELL} becomes a finite product
\begin{align}
\psi_\lab(u/z)=(1-Ku/z)\prod_{a=-1,1,2}\prod_{(i,j,k)\in{\textstyle\mathcal S^{(a)}_\mub}}(1-q_1^jq_2^kq_3^iu/z)^a.
\label{FINALPSI}
\end{align}

\subsection{Resonance and submodules}
Now we utilize the factor $1-Ku/z$ (see \eqref{KFACTOR}) in the action of $f(z)$.
Consider the specialization of the level
\begin{align}
K=q_1^bq_2^cq_3^a=q_2^mq_3^n.\label{KVALUE}
\end{align}
Note that
\begin{align*}
m=c-b,n=a-b.
\end{align*}
At this point, the $\mathcal E$-module $\mathcal M_{\al,\beta,\gamma}(u,K)$
is reducible. We denote it by 
$\mathcal M^{m,n}_{\al,\beta,\gamma}(u)$.
In fact, the module $\mathcal M^{m,n}_{\al,\beta,\gamma}(u)$
contains an infinite sequence of submodules. Let us describe these submodules.

Let $\omegab$ be the minimum configuration in $\mathcal P[\al,\beta,\gamma]$ (see \eqref{OMEGA}).
Recall $q_1q_2q_3=1$. For each triple $(\al,\beta,\gamma)$ and $K$ of the form \eqref{KVALUE},
we determine a unique 
$(a,b,c)$ with $a,b,c\ge1$,
satisfying
\eqref{KVALUE} and
\begin{align*}
(a,b,c)\not\in Y_\omegab,
\quad 
(a-1,b-1,c-1)\in Y_\omegab\,. 
\end{align*}
The action of $f(z)$ on 
$\mathcal M^{m,n}_{\al,\beta,\gamma}(u)$
is such that
removing a box at
\begin{align}
(i,j,k)=(a+t,b+t,c+t)\quad(t\in\Z_{\geq0})\label{IJKABC}
\end{align}
is prohibited. This is because the coefficient of $|\lab-1^{(k)}_i\rangle$ in $f(z)|\lab\rangle$
where $(i,j,k)\in CV(Y_\mub)$ ($\lab=\lab_\mub$) and $\la^{(k)}_i=j$,
contains the factor $(1-Ku/z)\delta(q_1^jq_2^kq_3^iu/z)$, but does not contain poles at
$q_1^jq_2^kq_3^iu/z=1$. The poles may appear only if for some $s\geq0$,
$(i+1+s,j+s,k+s)\in Y_\mub$ or $(i+s,j+1+s,k+s)\in Y_\mub$ or $(i+s,j+s,k+1+s)\in Y_\mub$.
However, this is not possible if $(i,j,k)\in CV(Y_\mub)$.
Therefore if the position of the box
is of the form \eqref{IJKABC} the coefficient vanishes when $K$ is specialized as \eqref{KVALUE}.

The lowest
weight vector $|\lab_\omegab\rangle$
is still cyclic in $\mathcal M^{m,n}_{\al,\beta,\gamma}(u)$.
There is no $K$ in the action of $e(z)$. The above consideration tells us that
once 
a box is added
at $(i,j,k)$ of the form \eqref{IJKABC}, one cannot remove it by the action of $f(z)$.
In fact, we will show that the module 
$\mathcal M^{m,n}_{\al,\beta,\gamma}(u)$
contains an infinite series of singular vectors.

Define $\omegab_t=(\omega_t^{(1)},\omega_t^{(2)},\ldots)\in\mathcal P[\al,\beta,\gamma]$ ($t\in\Z_{\geq0}$) by
\begin{align*}
\omega_{t,i}^{(k)}=
\begin{cases}
\max(b+t-1,\omega^{(k)}_i)&\text{ if $i\leq a+t-1$ and $k\leq c+t-1$};\\
\omega^{(k)}_i&\text{ otherwise}.
\end{cases}
\end{align*}
This is the minimal configuration among $\mub\in\mathcal P[\al,\beta,\gamma]$
such that $(a+t-1,b+t-1,c+t-1)\in Y_\mub$. 
Note that $\omegab_0=\omegab$ and
\begin{align*}
Y_{\omegab_0}\subset Y_{\omegab_1}\subset Y_{\omegab_2}\subset\cdots.
\end{align*}
For $t\geq1$ we have
\begin{align*}
CV(\omegab_t)=\{(a+t-1,b+t-1,c+t-1)\}.
\end{align*}

Set
\begin{align*}
\mathcal M^{{m,n},t}_{\al,\beta,\gamma}(u)
=\bigoplus_{Y_\mub\supset Y_{\omegab_t}}\C|\lab_\mub\rangle.
\end{align*}
This is a submodule of
$\mathcal M^{m,n}_{\al,\beta,\gamma}(u)$
with the lowest
vector $|\lab_{\omegab_t}\rangle$ satisfying
\begin{align*}
f(z)|\lab_{\omegab_t}\rangle=0.
\end{align*}
We have the inclusions
\begin{align*}
\mathcal M^{m,n}_{\al,\beta,\gamma}(u)
=\mathcal M^{{m,n},0}_{\al,\beta,\gamma}(u)
\supset
\mathcal M^{{m,n},1}_{\al,\beta,\gamma}(u)
\supset
\mathcal M^{{m,n},2}_{\al,\beta,\gamma}(u)
\supset\cdots.
\end{align*}

In this subsection we study the quotient
\begin{align*}
\mathcal N^{m,n}_{\al,\beta,\gamma}(u)=
\mathcal M^{m,n}_{\al,\beta,\gamma}(u)
/\mathcal M^{{m,n},1}_{\al,\beta,\gamma}(u).
\end{align*}
As we explained $(a,b,c)$ is uniquely determined
once $(\mathbf{a},\mathbf{b},\mathbf{c})$ 
and $m,n$ are fixed.

\begin{prop}
The module $\mathcal N^{m,n}_{\al,\beta,\gamma}(u)$ is an irreducible, quasifinite, tame 
$\mathcal E$-module of level $K=q_2^mq_3^n$. It has a basis parameterized by the set
\be
P_{\al,\beta,\gamma}^{m,n}=\{\mub \in P_{\al,\beta,\gamma}, (a,b,c)\not \in Y_{\mub}\}.
\en
The lowest weight is given by \Ref{PSIABG}, or \eqref{FINALPSI},  
with $\mub=\omegab$ given by \eqref{OMEGA}.
\end{prop}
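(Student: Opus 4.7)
The plan is to verify the four assertions in turn, with irreducibility being the substantive part. First, the basis claim is immediate from the definition of the quotient: by construction $\omegab_1$ is the minimal plane partition in $\mathcal{P}[\al,\beta,\ga]$ containing the box $(a,b,c)$, and the plane-partition monotonicity immediately gives $Y_\mub\supset Y_{\omegab_1}\iff(a,b,c)\in Y_\mub$, so $\mathcal{M}^{m,n,1}_{\al,\beta,\ga}(u)$ is spanned by those $|\lab_\mub\rangle$ with $(a,b,c)\in Y_\mub$ and the quotient has the claimed basis. The level $(1,K)$ is inherited unchanged. Tameness and quasifiniteness also descend: $\psi^\pm(z)$ acts diagonally on $|\lab_\mub\rangle$ by $\psi_\lab(u/z)$ as in \eqref{FINALPSI}, and the shell-recovery argument of Section \ref{SHELL} reconstructs $\mub$ from $\psi_\lab(u/z)$, so the joint spectrum remains simple. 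Since $Y_\omegab$ has no convex corners, $f(z)|\lab_\omegab\rangle=0$, making $|\lab_\omegab\rangle$ a lowest weight vector with the stated weight.

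For irreducibility, I would let $V\subset\mathcal{N}^{m,n}_{\al,\beta,\ga}(u)$ be a nonzero submodule. Since $V$ is $\psi^\pm$-stable and the joint spectrum is simple, $V=\bigoplus_{\mub\in S}\C|\lab_\mub\rangle$ for some $S\subset P^{m,n}_{\al,\beta,\ga}$. It then suffices to prove (i) $\omegab\in S$, and (ii) $|\lab_\omegab\rangle$ cyclically generates $\mathcal{N}^{m,n}_{\al,\beta,\ga}(u)$.

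For (i), the key combinatorial lemma to establish is that any $\mub\in P^{m,n}_{\al,\beta,\ga}$ with $\mub\ne\omegab$ has a convex corner of $Y_\mub$ \emph{off} the resonance diagonal $\{(a+t,b+t,c+t):t\ge 0\}$. If not, let $(a+t_0,b+t_0,c+t_0)$ be the largest convex corner; the monotonicity $\mu^{(c+s)}_{a+s}\ge\mu^{(c+s+1)}_{a+s+1}\ge b+s+1$ would then force $(a+s,b+s,c+s)\in Y_\mub$ for $0\le s\le t_0$ by downward induction, contradicting $(a,b,c)\notin Y_\mub$. Applying $f(z)$ and projecting onto the $\mub'$-weight component picks out each summand separately, and at an off-diagonal convex corner the coefficient in \eqref{KFACTOR} is nonzero because $1-Ku/z$ does not vanish there while the remaining factors $\psi'_{\lab,i,j,k}$ and $\psi'_{\la^{(k)},i-\beta_k}$ are regular and nonzero at convex corners by the analysis already carried out in Section \ref{Macmahon}. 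Hence $\mub'\in S$, and iterating the box-removal process terminates at $\omegab\in S$.

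For (ii), given $\mub\in P^{m,n}_{\al,\beta,\ga}$, I would choose a chain $\omegab=\mub_0\subsetneq\cdots\subsetneq\mub_N=\mub$ in which $Y_{\mub_\ell}\setminus Y_{\mub_{\ell-1}}$ is a single box, obtained from any linear extension of the cube poset on the finite set $Y_\mub\setminus Y_\omegab$. Each intermediate $\mub_\ell$ lies in $P^{m,n}_{\al,\beta,\ga}$ because $(a,b,c)\notin Y_\mub\supset Y_{\mub_\ell}$. At every step the matrix coefficient of $e(z)$ at the added concave corner, given by \eqref{EACTION}, is nonzero; projecting $e(z)|\lab_{\mub_{\ell-1}}\rangle$ onto the $\mub_\ell$-weight line then yields $|\lab_{\mub_\ell}\rangle$ up to a nonzero scalar. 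Combined with (i) this gives $V=\mathcal{N}^{m,n}_{\al,\beta,\ga}(u)$. The hard part is the combinatorial lemma in (i): the whole proof pivots on the fact that $1-Ku/z$ vanishes precisely along the diagonal $(a+t,b+t,c+t)$ in the resonance case, and the monotonicity argument is what guarantees that one can always step off this diagonal when descending to $\omegab$.
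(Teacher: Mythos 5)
The paper does not supply a formal proof of this proposition; it is a summary statement that caps the discussion in the preceding subsection ``Resonance and submodules,'' where the vanishing of the $(1-Ku/z)$ factor along the diagonal $(a+t,b+t,c+t)$ and the resulting chain of submodules $\mathcal{M}^{m,n,t}$ are worked out. Your proposal fills this gap with a complete argument that is in the spirit of what the paper sketches, and your decomposition into the three tasks (basis/level/tameness by restriction and quotient, lowering to $\omegab$, raising from $\omegab$) is the natural one.

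A small remark on your key lemma in step (i): its statement and proof are more circuitous than they need to be. For $\mub\in P^{m,n}_{\al,\beta,\ga}$, the downward-closure of $Y_\mub$ (i.e., $(i',j',k')\notin Y_\mub$ whenever $(i,j,k)\notin Y_\mub$ and $i'\geq i$, $j'\geq j$, $k'\geq k$) immediately forces $(a+t,b+t,c+t)\notin Y_\mub$ for all $t\geq 0$, so \emph{no} convex corner of $Y_\mub$ can lie on the resonance diagonal. What actually needs to be observed is only that $\mub\neq\omegab$ implies $CV(Y_\mub)\neq\emptyset$: this follows by taking a maximal box of the finite nonempty set $Y_\mub\setminus Y_\omegab$, which is then automatically a convex corner of $Y_\mub$. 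Your by-contradiction argument concerning a ``largest diagonal convex corner'' is correct but is treating a case that never arises; the real content is the nonvanishing of the $f(z)$ coefficient at an off-diagonal convex corner, which you argue correctly from the genericity assumption $q_1^{i_1}q_2^{i_2}q_3^{i_3}\neq 1$ unless $i_1=i_2=i_3$ and from the zero/pole analysis of $\psi'_{\lab,i,j,k}$ and $\psi'_{\la^{(k)},i-\beta_k}$ carried out earlier in the paper. The raising argument in (ii) is also correct; one should add the tiny observation that the boxes in $Y_\mub\setminus Y_\omegab$ never lie on the diagonal (again by downward closure applied to $\mub$), so the concave-corner coefficients of $e(z)$ used along the chain are not affected by the quotient and are nonzero by the analysis in the Macmahon section.
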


\subsection{The case of tensor product of the Fock spaces}
Set
\begin{align*}
\overline Y_\omega=\{(i,j,k)\in
\Z^3\mid
i\leq0\text{ or }j\leq0\text{ or }k\leq0\}\sqcup Y_\omega.
\end{align*}
In this subsection we consider the case where the following condition is satisfied
for some $a,b,c$:
\begin{align}\label{splits}
(a-1,b-1,s),(a-1,s,c-1),(s,b-1,c-1)\in\overline Y_\omega, \qquad (s\in\Z_{>0}).
\end{align}
Then, each of the Young diagrams $Y_\al,Y_\beta,Y_\gamma$ contains the following rectangle,
\begin{align*}
&Y_\al\supset C_\al=\{(k,j)\ |\ 1\leq k\leq c-1,\,1\leq j\leq b-1\},\\
&Y_\beta\supset C_\beta=\{(k,i)\ |\ 1\leq k\leq c-1,\,1\leq i\leq a-1\},\\
&Y_\gamma\supset C_\gamma=\{(i,j)\ |\ 1\leq i\leq a-1,\,1\leq j\leq b-1\},
\end{align*}
and each of $\al,\beta,\gamma$ splits into three parts; core, arms and legs.
We define partitions which determine arms and legs of $\al,\beta,\gamma$ as follows:
\begin{align*}
&\al_{\rm arms}=(\al_1-b+1,\ldots,\al_{c-1}-b+1),\\
&\al_{\rm legs}=(\al'_1-c+1,\ldots,\al'_{b-1}-c+1),\\
&\beta_{\rm arms}=(\beta_1-a+1,\ldots,\beta_{c-1}-a+1),\\
&\beta_{\rm legs}=(\beta'_1-c+1,\ldots,\beta'_{a-1}-c+1),\\
&\gamma_{\rm arms}=(\gamma_1-b+1,\ldots,\gamma_{a-1}-b+1),\\
&\gamma_{\rm legs}=(\gamma'_1-a+1,\ldots,\gamma'_{b-1}-a+1).
\end{align*}

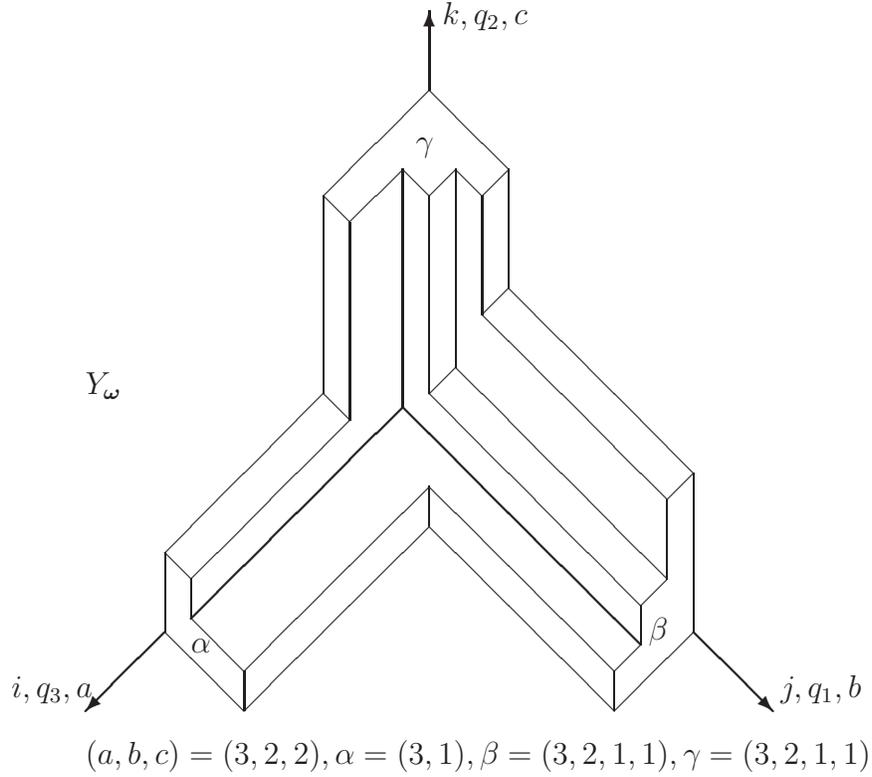
\begin{figure}
\begin{picture}(30,290)(200,-20)

\put(200,235){\line(1,-1){30}}
\put(200,235){\line(-1,-1){40}}
\put(160,195){\line(1,-1){10}}
\put(230,205){\line(-1,-1){10}}
\put(170,185){\line(1,1){20}}
\put(220,195){\line(-1,1){10}}
\put(190,205){\line(1,-1){10}}
\put(210,205){\line(-1,-1){10}}

\put(100,30){\line(0,1){30}}
\put(100,30){\line(1,-1){30}}
\put(100,60){\line(1,-1){10}}
\put(130,0){\line(0,1){15}}
\put(130,15){\line(-1,1){20}}
\put(110,50){\line(0,-1){15}}

\put(300,30){\line(0,1){60}}
\put(300,30){\line(-1,-1){30}}
\put(300,90){\line(-1,-1){10}}
\put(290,80){\line(0,-1){30}}
\put(290,50){\line(-1,-1){10}}
\put(280,40){\line(0,-1){15}}
\put(280,25){\line(-1,-1){10}}
\put(270,15){\line(0,-1){15}}

\put(160,195){\line(0,-1){75}}
\put(100,60){\line(1,1){60}}

\put(170,185){\line(0,-1){75}}
\put(110,50){\line(1,1){60}}

\put(270,0){\line(-1,1){70}}
\put(130,0){\line(1,1){70}}

\put(270,15){\line(-1,1){70}}
\put(130,15){\line(1,1){70}}

\put(300,90){\line(-1,1){70}}
\put(230,205){\line(0,-1){45}}

\put(290,80){\line(-1,1){70}}
\put(220,195){\line(0,-1){45}}

\put(290,50){\line(-1,1){80}}
\put(210,205){\line(0,-1){75}}

\put(280,40){\line(-1,1){80}} 
\put(200,195){\line(0,-1){75}}

\put(200,120){\line(1,1){10}}

\put(200,70){\line(0,1){15}}
\put(220,150){\line(1,1){10}}
\put(170,110){\line(-1,1){10}}

\put(195,212){$\gamma$}
\put(110,22){$\al$}
\put(283,27){$\beta$}
\put(70,-20){$(a,b,c)=(3,2,2),\al=(3,1),\beta=(3,2,1,1),\gamma=(3,2,1,1)$}
\put(70,120){$Y_\omegab$}

\put(42,5){$i,q_3,a$}
\put(333,5){$j,q_1,b$}
\put(205,260){$k,q_2,c$}

\thicklines
\put(190,205){\line(0,-1){90}}
\put(110,35){\line(1,1){80}} 
\put(280,25){\line(-1,1){90}} 

\put(200,235){\vector(0,1){30}}
\put(300,30){\vector(1,-1){30}}
\put(100,30){\vector(-1,-1){30}}
\end{picture}
\caption{The case of tensor product}
\end{figure}

Introduce the notation 
\begin{align*}
\mathcal M^{2,(n)}_{\al,\beta}(u)&=\mathcal M^{(n)}_{\ab,\bb}(u)=\mathcal M^{(n)}_{\ab,\bb}(u;q_1,q_2,q_3),\\
 \mathcal M^{1,(n)}_{\al,\beta}(u)&=\mathcal M^{(n)}_{\ab,\bb}(u;q_3,q_1,q_2),\\
 \mathcal M^{3,(n)}_{\al,\beta}(u)&=\mathcal M^{(n)}_{\ab,\bb}(u;q_2,q_3,q_1).
\end{align*}
Here $\al,\beta$ are related to $\ab,\bb$ by \Ref{ABC}. For example,
\begin{align*}
\mathcal M^{1,(1)}_{r,s}(u)\simeq\mathcal F_2(q_1^rq_3^su),\quad
\mathcal M^{3,(1)}_{r,s}(q_3u)\simeq\mathcal F_3(q_1^sq_2^rq_3u).
\end{align*}
Let us consider a few examples.
The simplest case is $(a,b,c)=(1,1,1)$ and $(\al,\beta,\gamma)=(\emptyset,\emptyset,\emptyset)$.
In this case $\mathcal N^{0,0}_{\emptyset,\emptyset,\emptyset}(u)$
is the trivial 1 dimensional module,
\begin{align*}
\mathcal N^{0,0}_{\emptyset,\emptyset,\emptyset}(u)
\simeq\C,\
K=1,\ \psi_{\lab_\omegab}(u/z)=1.
\end{align*}
We have other specializations for the same $(\al,\beta,\gamma)=(\emptyset,\emptyset,\emptyset)$:
\begin{align*}
&
\mathcal N^{r,0}_{\emptyset,\emptyset,\emptyset}(u)
\simeq\mathcal M^{2,(r)}_{\mathbf{0},\mathbf{0}}(u)\subset
\mathcal F_2(u)\otimes\mathcal F_2(q_2u)\otimes\cdots\otimes\mathcal F_2(q_2^{r-1}u),\\
& K=q_2^r,\ \psi_{\lab_\omegab}(u/z)=\frac{1-q_2^ru/z}{1-u/z},\ (a,b,c)=(1,1,r+1)
\quad(r\geq1).
\end{align*}

If $(\al,\beta,\gamma)=(\emptyset,(1),\emptyset)$ 
then we have the following cases; we consider the cases up to the symmetry.
\begin{align*}
& \mathcal N^{r,0}_{\emptyset,(1),\emptyset}(u)
\simeq
\mathcal M^{2,(r)}_{\mathbf{0},(1,0,\ldots,0)}(u)\subset\mathcal F_2(q_3u)\otimes
\mathcal F_2(q_2u)\otimes\cdots\otimes\mathcal F_2(q_2^{r-1}u),\\
&K=q_2^r,\
\psi_{\lab_\omegab}(u/z)=\frac{1-q_2^ru/z}{1-q_2u/z}\cdot\frac{1-q_2q_3u/z}{1-q_3u_z},\ (a,b,c)=(1,1,r+1)\quad(r\geq1);\\
&\mathcal N^{1,1}_{\emptyset,(1),\emptyset}(u)
\simeq
\mathcal F_2(q_3u)\otimes\mathcal F_3(q_2u),\\
&K=q_2q_3,\ \psi_{\lab_\omegab}(u/z)=\frac{(1-q_2q_3u/z)^2}{(1-q_2u/z)(1-q_3u/z)},\ 
(a,b,c)=(2,1,2).
\end{align*}

If $(\al,\beta,\gamma)=((1),(1),\emptyset)$ then we have the following cases up to the symmetry.
\begin{align*}
&\mathcal N^{r,0}_{(1),(1),\emptyset}(u)
\simeq
\mathcal M^{(r)}_{(1,0,\ldots,0),(1,0,\ldots,0)}(u)\subset
\mathcal F_2(q_1q_3u)\otimes
\mathcal F_2(q_2u)\otimes\cdots\otimes\mathcal F_2(q_2^{r-1}u),\\
&K=q_2^r,\
\psi_{\lab_\omegab}(u/z)=\frac{1-q_2^ru/z}{1-q_2u/z}\cdot\frac{1-q_1q_2q_3u/z}{1-q_1q_3u/z},\
(a,b,c)=(1,1,r+1)
\quad(r\geq1);\\
&\mathcal N^{1,1}_{(1),(1),\emptyset}(u)
\simeq
\mathcal F_2(q_1q_3u)\otimes\mathcal F_3(q_2u),\\
&K=q_2q_3,\ \psi_{\lab_\omegab}(u/z)=\frac{1-q_2q_3u/z}{1-q_2u/z}
\frac{1-q_1q_2q_3u/z}{1-q_1q_3u/z},\ (a,b,c)=(2,1,2).
\end{align*}

If $(\al,\beta,\gamma)=((1),(1),(1))$ then we have the following cases up to the symmetry.
\begin{align*}
&\mathcal N^{0,0}_{(1),(1),(1)}(u)
\simeq
\mathcal F_2(q_1q_3u)\otimes\mathcal F_3(q_1q_2u)\otimes\mathcal F_1(q_2q_3u),\\
& K=1,\
\psi_{\lab_\omegab}(u/z)=\frac{(1-q_1q_2q_3u/z)^3}{(1-q_1q_2u/z)(1-q_1q_3u/z)(1-q_2q_3u/z)},\ 
(a,b,c)=(2,2,2);\\
& \mathcal N^{0,0}_{(1),(1),(1)}(u)
\simeq
\mathcal F_2(q_1q_3u)\otimes\mathcal F_3(q_1q_2u),\\
&K=q_2q_3,\ \psi_{\lab_\omegab}(u/z)=
\frac{(1-q_1q_2q_3u/z)^2}{(1-q_1q_3u/z)(1-q_1q_2u/z)},\ (a,b,c)=(2,1,2).
\end{align*}
Finally we give the general statement:

\begin{prop}
Under condition \Ref{splits}, we have
\begin{align*}
\mathcal N^{c-b,a-b}_{\al,\beta,\gamma}(u)
\simeq
\mathcal M^{2,(c-1)}_{\al_{\rm arms},\beta_{\rm arms}}(q_1^{b-1}q_3^{a-1}u)\otimes
\mathcal M^{3,(a-1)}_{\beta_{\rm legs},\gamma_{\rm arms}}(q_1^{b-1}q_2^{c-1}u)\otimes
\mathcal M^{1,(b-1)}_{\al_{\rm legs},\gamma_{\rm legs}}(q_2^{c-1}q_3^{a-1}u).
\end{align*}
\end{prop}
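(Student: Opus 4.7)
The plan is to apply the uniqueness part of Theorem \ref{highest weight thm}: I will show that both sides are irreducible lowest weight $\mathcal{E}$-modules of level $K=q_2^{c-b}q_3^{a-b}$ with coinciding lowest weights, hence isomorphic. Irreducibility of the LHS is the content of the proposition of the previous subsection. For the RHS, I would first verify that the formal coproduct \eqref{tre}--\eqref{trpsi} makes sense on the triple tensor product: the three spectral parameters $q_1^{b-1}q_3^{a-1}u$, $q_1^{b-1}q_2^{c-1}u$, $q_2^{c-1}q_3^{a-1}u$ are chosen so that, under \eqref{splits} and the genericity of $q_1,q_2$, no unwanted poles arise in the rational substitutions on finite-dimensional weight spaces, and irreducibility of the tensor product then follows from tameness together with the fact that the joint spectrum of $\psi^\pm(z)$ on the tensor product is simple.

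The combinatorial heart of the argument is a bijection
\begin{align*}
P^{c-b,a-b}_{\al,\beta,\gamma} \;\simeq\; P^{2,(c-1)}_{\al_{\rm arms},\beta_{\rm arms}} \times P^{3,(a-1)}_{\beta_{\rm legs},\gamma_{\rm arms}} \times P^{1,(b-1)}_{\al_{\rm legs},\gamma_{\rm legs}}.
\end{align*}
Under \eqref{splits}, the minimal configuration $Y_\omegab$ consists of three semi-infinite slabs extending along the $i$-, $j$-, $k$-axes, of thicknesses $a-1$, $b-1$, $c-1$ respectively, meeting only near the single corner $(a-1,b-1,c-1)$. The constraint $(a,b,c)\notin Y_\mub$ forbids any box at the triple junction and forces $Y_\mub\setminus Y_\omegab$ to split as a disjoint union of three pieces, each confined to one of the three slabs. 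Projecting each piece transverse to its slab identifies it with a plane partition of the type parametrizing the corresponding Macmahon factor; the arms and legs of $\al,\beta,\gamma$ supply precisely the boundary data $\al_{\rm arms},\beta_{\rm arms},\ldots$ listed in the statement, and the slab thicknesses give the upper bounds $(c-1),(a-1),(b-1)$.

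Matching the lowest weights uses the shell formula \eqref{FINALPSI}. Under \eqref{splits}, the shell $\mathcal{S}_\omegab$ decomposes as a disjoint union of three shells, one per slab. After the spectral shifts $u\mapsto q_1^{b-1}q_3^{a-1}u$, $u\mapsto q_1^{b-1}q_2^{c-1}u$, $u\mapsto q_2^{c-1}q_3^{a-1}u$, each sub-shell reproduces the shell of the minimum configuration of the corresponding factor module, so the product of the three factor lowest weights equals $\psi_{\lab_\omegab}(u/z)$ on the LHS. The level identity $q_1^{b-1}q_2^{c-1}q_3^{a-1}=q_2^{c-b}q_3^{a-b}$ is immediate from $q_1q_2q_3=1$.

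The main obstacle is the combinatorial bijection, specifically the careful bookkeeping at the triple junction near $(a-1,b-1,c-1)$ and the verification that the boundary partitions obtained by transverse projection are exactly $\al_{\rm arms},\al_{\rm legs},\beta_{\rm arms},\beta_{\rm legs},\gamma_{\rm arms},\gamma_{\rm legs}$ as defined. Once this is in place, the matching of lowest weights via \eqref{FINALPSI} is a direct calculation, and Theorem \ref{highest weight thm} closes the argument.
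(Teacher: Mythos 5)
The paper states this proposition without proof, so there is no argument of the authors to compare against; I can only assess your proposal on its own terms. Your strategy (verify lowest weights and levels match, verify the graded characters match via a combinatorial bijection, then invoke Theorem \ref{highest weight thm}) is the natural one, and it is consistent with the unproved low-rank cases worked out in the text.

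Two remarks. First, your appeal to ``tameness together with the fact that the joint spectrum of $\psi^\pm(z)$ on the tensor product is simple'' as the route to irreducibility of the right-hand side is unnecessary and somewhat circular. Once you have (a) a lowest-weight vector in the RHS whose $\psi^\pm$-eigenvalue equals that of the LHS, which follows from the factorization of \eqref{FINALPSI} into slab contributions, and (b) the graded combinatorial bijection, you are done without any separate irreducibility argument for the RHS: the submodule $L$ generated by the lowest-weight vector surjects onto the irreducible LHS by uniqueness, so $\dim L_n \geq \dim (\mathrm{LHS})_n$, while $L\subset\mathrm{RHS}$ and $\dim(\mathrm{RHS})_n = \dim(\mathrm{LHS})_n$ by the bijection forces all three to coincide. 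Second, your description of $Y_\mub\setminus Y_\omegab$ as a disjoint union of three pieces is correct but should be justified more explicitly. Under \eqref{splits} the three pairwise intersections of the slabs $\{i\le a-1\}$, $\{j\le b-1\}$, $\{k\le c-1\}$ (restricted to positive coordinates) are entirely contained in $Y_\omegab$: e.g.\ $\{i\le a-1,\ k\le c-1\}\subset Y_\omegab$ because $\beta_k\ge\beta_{c-1}\ge a-1\ge i$ gives $\omega^{(k)}_i=\infty$, and similarly for the other two using $C_\al,C_\gamma$. Combined with $(a,b,c)\notin Y_\mub$ (which excludes $\{i\ge a,\ j\ge b,\ k\ge c\}$), the free boxes lie in the three pairwise disjoint regions $\{i\le a-1,\ j\ge b,\ k\ge c\}$, $\{j\le b-1,\ i\ge a,\ k\ge c\}$, $\{k\le c-1,\ i\ge a,\ j\ge b\}$, giving the product decomposition with the stated boundary data $\al_{\rm arms},\ \beta_{\rm arms},\ \ldots$. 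With these two points made precise, your proof is complete.
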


\section{$\glinf$-modules and Genfand-Zetlin basis}\label{GZbasis}

\subsection{Algebra $\glinf$}

In this section, we introduce a family of $\glinf$-modules 
which arises as a limit of $\E$-modules considered 
in the previous section.

We fix the notation as follows. 
By definition, $\glinf$ is the complex Lie algebra 
with basis $\{E_{i,j}\}_{i,j\in\Z}$ and the commutation relations
$[E_{i,j},E_{k,l}]=\delta_{j,k}E_{i,l}-\delta_{l,i}E_{k,j}$. 
We set
\begin{align*}
E_i=E_{i,i+1},\quad F_i=E_{i+1,i},\quad 
H_i=E_{i,i}-E_{i+1,i+1}\,.
\end{align*}
We shall consider also the following Lie subalgebras of $\glinf$,
\begin{align*}
&
\glhf^+=\mathrm{span}\,\{E_{i,j}\mid i,j\ge 1\}\,
\quad
\glhf^-=\mathrm{span}\,\{E_{i,j}\mid i,j\le 0\}\,,
\\
&\mathfrak{g}_{r,s}=\mathrm{span}\,\{E_{i,j}  \mid r\le i,j\le s\} 
\ \simeq\ \gl_{s-r+1}\,,
\end{align*}
where $r,s\in\Z$, $r<s$. 

For a sequence of complex numbers  
$\theta=(\theta_i)_{i\in\Z}$, we denote by 
 $\mathcal{W}_{\theta}$
the unique irreducible $\glinf$-module generated by a vector $v$
such that
\begin{align*}
E_{i,j}v=0\quad (i>j),\qquad
E_{i,i}v=\theta_i v\quad(i\in\Z).
\end{align*} 
The $\theta$ is called the lowest weight and the vector $v$ is called the lowest weight vector.

\subsection{Gelfand-Zetlin basis}
Let $N$ be a positive integer. 
We recall the Gelfand-Zetlin (GZ) basis for irreducible representations
of 
$\mathfrak{g}_{-N+1,0}\simeq\gl_N$. 

A Gelfand-Zetlin (GZ) pattern for $\gl_N$ 
is an array of 
integers
\begin{align}
\mu=\,   %
\begin{matrix}
\mu^{(1)}_{1} &           &          &          \\
\mu^{(2)}_{1} &\mu^{(2)}_{2}  &          &          \\
\vdots    &\ddots     &  \ddots  &          \\
\mu^{(N)}_{1} &\mu^{(N)}_{2}  & \cdots   &\mu^{(N)}_{N} \\
\end{matrix}\,,
\label{GZhk}
\end{align}
such that
\begin{align}
\mu^{(i)}_{j}\ge \mu^{(i)}_{j+1}\,,\quad \mu^{(i)}_{j}\ge
\mu^{(i+1)}_{j}\quad \text{for all $i,j$.}
\label{GZcond}
\end{align}
Quite generally, 
we shall denote by $\mu \pm \one^{(i)}_{j}$ the GZ pattern
obtained by changing $\mu^{(i)}_{j}$ to $\mu^{(i)}_{j}\pm 1$ while 
keeping the rest of the entries unchanged. 

Given a set of integers $\eta=(\eta_1,\cdots,\eta_N)$, 
$\eta_1\ge\cdots\ge\eta_N$, let 
$L_\eta$ be the vector space
with basis $\{|\mu\rangle_{(N)}\}$, 
where $\mu$ runs over all GZ patterns for $\gl_N$ 
satisfying 
\begin{align*}
\mu^{(i)}_{i}=\eta_i \quad (i=1,\cdots,N).
\end{align*}
We set $|\mu\rangle_{(N)}=0$ if the condition \eqref{GZcond} is violated. 

Notation being as above, the 
following formulas define an action of 
$\mathfrak{g}_{-N+1,0}$
on $L_\eta$:
\begin{align}
&E_{-i,-i+1}
\ket{\mu}_{(N)}
=\sum_{j=1}^{N-i} \ket{\mu+\one^{(i+j)}_{j}}_{(N)}\,
\frac{\prod_{k=1}^{N-i+1}(\ell^{(i+j)}_{j}-\ell^{(i-1+k)}_{k})}
{\prod_{1\le k(\neq j)\le N-i}(\ell^{(i+j)}_{j}-\ell^{(i+k)}_{k})}\quad 
(1\leq i\leq N-1),
\label{GZact1}\\
&
E_{-i+1,-i}
\ket{\mu}_{(N)}
=-\sum_{j=1}^{N-i} \ket{\mu-\one^{(i+j)}_{j}}_{(N)}\,
\frac{\prod_{k=1}^{N-i-1}(\ell^{(i+j)}_{j}-\ell^{(i+1+k)}_{k})}
{\prod_{1\le k(\neq j)\le N-i}(\ell^{(i+j)}_{j}-\ell^{(i+k)}_{k})}\quad 
(1\leq i\leq N-1),
\label{GZact2}\\
&E_{-i,-i}
\ket{\mu}_{(N)}
=\bigl(\sum_{j=1}^{N-i}\mu^{(i+j)}_{j}-\sum_{j=1}^{N-i-1}\mu^{(i+1+j)}_{j}\,
\bigr)\ket{\mu}_{(N)}\quad (0\leq i\leq N-1),
\label{xxx}
\end{align}
where
\begin{align}
&\ell^{(i+j)}_{j}=\mu^{(i+j)}_{j}-j+1\,.
\label{ell-ij}
\end{align}
The representation $L_\eta$ is irreducible. 
The highest weight is
$(\theta_{-N+1},\theta_{-N+2},\cdots,\theta_{0})
=(\eta_1,\eta_{2},\cdots,\eta_N)$
and the  lowest weight is $(\eta_N,\eta_{N-1},\cdots,\eta_1)$, 
the corresponding highest (resp. lowest) weight
vector being 
given by the GZ pattern with $\mu^{(i)}_{j}=\eta_j$ 
(resp. $\mu^{(i)}_{j}=\eta_i$) for all $i,j$.  

Now we extend this construction to the case of $\glhf^-$. 
In the following we fix a positive integer $n$. 
Consider an infinite GZ pattern of width $n$, 
\begin{align}
\mu=\ 
\begin{matrix}
\mu^{(1)}_{1} &       &          &                  &          &\\
\vdots    &\ddots &          &                  &          &\\
\mu^{(n)}_{1}&\cdots &\mu^{(n)}_{n}  &                  &          &\\
\mu^{(n+1)}_{1}&\cdots &\mu^{(n+1)}_{n} & 0    &           &  \\
\mu^{(n+2)}_{1} &\cdots &\mu^{(n+2)}_{n} & 0    &0           &    \\
\vdots &\cdots &\vdots  & 0    &0           &0 \cdots\\
\end{matrix}\,
\label{GZhk1}
\end{align}
that is, an array of integers $\mu=(\mu^{(i)}_{j})_{i\ge j\ge1}$ 
satisfying \eqref{GZcond} and
\begin{align}
\mu^{(i)}_{j}=0\quad \text{if $j>n$}\,.
\label{depth}
\end{align}

Let $\eta=(\eta_1,\cdots,\eta_n)$, $\ga=(\ga_1,\cdots,\ga_n)$  
be partitions such that 
$\eta_i\ge \ga_i$, $i=1,\cdots,n$. 
Let $\Yb^-_{\eta,\ga}$ be the vector space with basis 
$\{\ket{\mu}\}$, where
$\mu=(\mu^{(i)}_{j})_{i\ge j\ge 1}$ runs over GZ patterns \eqref{GZhk1}
of width $n$ satisfying the conditions
\begin{align}
&\mu^{(i)}_{i}=\eta_i\quad (i=1,\cdots,n)\,,
\label{bcond1}\\
&\mu^{(i)}_{j}=\ga_j\quad (i\gg 1,\ j=1,\cdots,n)\,.
\label{bcond2}
\end{align}

\begin{prop}
The following formulas define a representation of $\glhf^-$ on 
$\Yb^-_{\eta,\ga}$:
\begin{align}
&E_{-i,-i+1}
\ket{\mu}
=\sum_{j=1}^{n} \ket{\mu+\one^{(i+j)}_{j}}c^+_{i+j,j}(\mu)\,,
\quad E_{-i+1,-i}
\ket{\mu}=\sum_{j=1}^{n} \ket{\mu-\one^{(i+j)}_{j}}
c^-_{i+j,j}(\mu)\quad(i\geq1),
\label{GZact1-}\\
&E_{-i,-i}\ket{\mu}
=\sum_{j=1}^{n} \bigl(\mu^{(i+j)}_{j}-\mu^{(i+1+j)}_{j}\bigr)\,
\ket{\mu}\quad(i\geq0),
\label{GZact2-}
\end{align}
where
\begin{align*}
&c^\pm_{i+j,j}(\mu)
=\pm \frac{\prod_{k=1}^{n}(\ell^{(i+j)}_{j}-\ell^{(i\mp1+k)}_{k})}
{\prod_{1\le k(\neq j)\le n}(\ell^{(i+j)}_{j}-\ell^{(i+k)}_{k})}\,,
\end{align*}
and  $\ell^{(i+j)}_{j}$ is defined by \eqref{ell-ij}.
\end{prop}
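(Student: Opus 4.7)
The proposition asserts that \eqref{GZact1-}, \eqref{GZact2-} define a $\glhf^-$-module structure on $\Yb^-_{\eta,\ga}$. Two things require verification: well-definedness of the action on $\Yb^-_{\eta,\ga}$, and the defining commutation relations of $\glhf^-$.

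For well-definedness, the sum in \eqref{GZact1-} has at most $n$ terms by the width restriction \eqref{depth}, so no convergence issue arises. The coefficient $c^+_{i+j,j}(\mu)$ contains the numerator factor $\ell^{(i+j)}_j - \ell^{(i-1+j)}_j = \mu^{(i+j)}_j - \mu^{(i-1+j)}_j$, which vanishes precisely when $\mu + \one^{(i+j)}_j$ would violate the GZ inequality $\mu^{(i-1+j)}_j \geq \mu^{(i+j)}_j + 1$; analogous numerator factors guard the remaining GZ inequalities as well as condition \eqref{depth}. The diagonal boundary condition \eqref{bcond1} is preserved because $E_{-i,-i+1}$ and $E_{-i+1,-i}$ modify only off-diagonal entries $\mu^{(i+j)}_j$ with $j < i+j$, and the stabilization \eqref{bcond2} is preserved since each term alters only a single entry.

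For the commutation relations, I would exploit the Chevalley presentation: $\glhf^-$ is generated by $\{E_{-i,-i+1}, E_{-i+1,-i}\}_{i \geq 1}$ together with $\{E_{-i,-i}\}_{i \geq 0}$, subject to the usual $\gl_\infty$ relations (commutators for distant generators, $[E,F]=H$, Serre). Any single relation involves finitely many generators whose composite action touches only a bounded set of entries of $\mu$, so the verification reduces to rational function identities in finitely many variables $\ell^{(k)}_j$. Structurally these are the identities underlying the classical Gelfand-Zetlin theorem for $\gl_N$, with the product ranges $1,\ldots,N-i$ replaced by $1,\ldots,n$. Alternatively and perhaps more conceptually, one can derive the representation as the $q_1 \to 1$ limit of the Macmahon module $\mathcal{N}^{m,n}_{\al,\emptyset,\ga}(u)$ of Section \ref{Macmahon}: after a suitable basis renormalization, the $\E$-action formulas \eqref{EACTION}, \eqref{KFACTOR} degenerate to \eqref{GZact1-}, \eqref{GZact2-}, and the $\E$-relations limit to the $\glhf^-$-relations, giving well-definedness for free.

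The main obstacle is the identity $[E_{-i,-i+1}, E_{-i+1,-i}] = E_{-i,-i} - E_{-i+1,-i+1}$: the two terms on the left expand to double sums of the form $\sum_{j,k} c^+_{i+j,j}(\mu - \one^{(i+k)}_k)\, c^-_{i+k,k}(\mu)\, \ket{\mu - \one^{(i+k)}_k + \one^{(i+j)}_j}$ minus the analogous expression with the roles of $c^\pm$ swapped, and one must verify that the off-diagonal contributions ($j \neq k$) cancel in pairs and the diagonal ($j=k$) contributions collapse to the telescoping combination \eqref{GZact2-}. This is a partial-fractions computation, identical in spirit to the one in the $\gl_N$ proof; alternatively it follows automatically from the $q_1 \to 1$ degeneration of the $\E$-relation \eqref{rel3}. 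The Serre relations and the remaining Chevalley identities are handled similarly, and the zero-coefficient boundaries established above ensure they are consistent with the GZ pattern constraints \eqref{GZcond}, \eqref{depth}, \eqref{bcond1}, \eqref{bcond2}.
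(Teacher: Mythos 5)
Your proposal identifies the right high-level strategy (reduce to classical $\gl_N$ Gelfand--Zetlin) but glosses over the one nontrivial technical step that makes the reduction work, and your fallback routes, while valid, are both more laborious than the paper's argument.

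The paper does not re-verify the commutation relations by partial fractions. It observes that for each fixed generator (so fixed $i$), one may pick $N>n+i+1$ and compare the width-$n$ formulas \eqref{GZact1-}, \eqref{GZact2-} against the classical $\gl_N$ formulas \eqref{GZact1}--\eqref{xxx} evaluated on GZ patterns obeying \eqref{depth}. These formulas are \emph{not} literally the same after ``replacing the product range $1,\dots,N-i$ by $1,\dots,n$'', as you assert: using $\ell^{(m)}_k=1-k$ for $k>n$, the extra factors in the numerator and denominator of \eqref{GZact1} telescope to a residual scalar $\ell^{(i+j)}_j+N-i$ rather than canceling outright. The paper absorbs exactly this scalar by the basis change $\ket{\mu}=f(\mu)\ket{\mu}_{(N)}$ with $f$ satisfying $f(\mu+\one^{(i+j)}_j)=f(\mu)\bigl(\ell^{(i+j)}_j+N-i\bigr)$; since the right-hand side depends only on the row index $i+j$ and the entry being incremented, this recursion has a consistent solution. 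Once this is in place, every Chevalley relation is inherited from the known $\gl_N$ GZ theorem and there is nothing left to check. Your writeup does not mention the base change, and without it the claim that one ``reduces to the $\gl_N$ identities'' is incorrect --- the coefficients genuinely differ.

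Your two alternatives --- a direct partial-fractions verification of $[E,F]=H$ and the Serre relations, or a $q_1\to1$ degeneration from the Macmahon module --- would both succeed in principle, and the degeneration route is indeed close in spirit to what the paper does in Proposition \ref{Nnn} for the full $\glinf$ statement. But the paper deliberately avoids those heavier computations for \emph{this} proposition by leaning on the finite-rank GZ theorem via the base change. You have correctly located the main obstacle ($[E,F]=H$), but the proposal as written does not actually overcome it: it is asserted that the computation is ``identical in spirit'' to the $\gl_N$ case without exhibiting either the computation or the normalization that would make the appeal to the $\gl_N$ case rigorous.
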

\begin{proof}
Clearly the operators 
\eqref{GZact1-}, \eqref{GZact2-}
preserve the space $\Yb^-_{\eta,\ga}$. 
Given $i$, take $N$ so that $N>n+i+1$. 
Then, under the condition \eqref{depth}, 
the formulas \eqref{GZact1}--\eqref{xxx}
reduce to \eqref{GZact1-}, \eqref{GZact2-} 
after making a base change of the form $\ket{\mu}=f(\mu)\ket{\mu}_{(N)}$.
Consider \eqref{GZact1}. The range of summation $1\leq j\leq N-i$
reduces to that in \eqref{GZact1-} $1\leq j\leq n$ because $\mu^{(i+j)}_j=0$
is unchanged. We see also that the coefficient
$f(\mu)$ is to satisfy $f(\mu+\one^{(i+j)}_{j})=f(\mu)(\ell^{(i+j)}_{j}+N-i)$, 
which can be solved easily. 
Hence the commutation relations of the generators are obviously satisfied.
\end{proof}

\begin{prop}\label{irr-gminus}
If $\ga_1=\cdots=\ga_n$, then  $\Yb^-_{\eta,\ga}$ is an irreducible
$\glhf^-$-module. 
\end{prop}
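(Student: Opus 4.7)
The plan is to reduce the irreducibility of $\Yb^-_{\eta,\ga}$ to the known irreducibility of finite Gelfand-Zetlin modules, using the base-change identification $|\mu\rangle = f(\mu)|\mu\rangle_{(N)}$ already employed in the preceding proof. Set $c := \gamma_1 = \cdots = \gamma_n$ and introduce the minimal pattern $\omega$ defined by $\omega^{(i)}_j = \eta_i$ for $1 \leq j \leq i \leq n$ and $\omega^{(i)}_j = c$ for $i > n$, $1 \leq j \leq n$. The equal-$\gamma$ hypothesis together with $\eta_n \geq c$ ensures that $\omega$ satisfies the interlacing \eqref{GZcond} and the boundary conditions \eqref{bcond1}--\eqref{bcond2}. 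More importantly, pointwise minimality forces every decrement $\omega - \one^{(i+j)}_j$ to be inadmissible, so $E_{-i+1,-i}|\omega\rangle = 0$ for all $i \geq 1$ and $|\omega\rangle$ is a ``lowest weight'' vector; for unequal $\gamma$ this would fail because $\omega$ could then admit genuine decrements at positions where $\gamma_j > \gamma_{j+1}$.

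I would next establish cyclicity of $|\omega\rangle$. Any basis vector $|\mu\rangle$ is reached from $|\omega\rangle$ by a finite sequence $\omega = \nu_0 \to \nu_1 \to \cdots \to \nu_r = \mu$ of single-box additions at concave corners, which is possible since $\mu$ and $\omega$ differ in only finitely many entries. At each step the coefficient $c^+_{i+j,j}(\nu_t)$ in \eqref{GZact1-} is nonzero: the potentially dangerous $k=j$ factor of the numerator survives because $\nu_t^{(i+j-1)}_j > \nu_t^{(i+j)}_j$ at a concave corner, and a direct interlacing analysis rules out the vanishing of the remaining numerator and denominator factors for $k \neq j$.

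For the converse inclusion, let $W \subseteq \Yb^-_{\eta,\ga}$ be a nonzero submodule and pick $0 \neq w \in W$. Choose $M \geq n$ so that $w$ lies in the finite-dimensional subspace $V_M := \mathrm{span}\{|\mu\rangle : \mu^{(k)}_j = c \text{ for all } k > M,\ 1 \leq j \leq n\}$. The crucial observation is that $V_M$ is stable under every lowering operator $E_{-i+1,-i}$: attempting to lower an entry $\mu^{(i+j)}_j$ with $i+j > M$ would produce the value $c-1 < c = \mu^{(i+j+1)}_j$, which is inadmissible. Passing to a slightly larger finite-dimensional truncation $V_{M+n-1}$ that is stable under the full subalgebra $\mathfrak{g}_{-M+1,0} \cong \mathfrak{gl}_M$, the base change $|\mu\rangle = f(\mu)|\mu\rangle_{(N)}$ of the preceding proposition identifies this truncation with the finite irreducible $\mathfrak{gl}_M$-module $L_{\eta'}$ of top row $\eta' = (\eta_1, \ldots, \eta_n, c, \ldots, c)$, the ``passive'' entries $\mu^{(i)}_j$ with $j > n$ (suppressed by \eqref{depth}) corresponding to entries forced to $c$ on the finite side. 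Irreducibility of $L_{\eta'}$ together with $w \neq 0$ then yields $|\omega\rangle \in W$, since $|\omega\rangle$ is precisely the lowest-weight vector of $L_{\eta'}$ under the identification. Combined with cyclicity, this forces $W = \Yb^-_{\eta,\ga}$.

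The main obstacle will be the careful verification that the base-change embedding $V_{M+n-1} \hookrightarrow L_{\eta'}$ is genuinely an isomorphism of $\mathfrak{gl}_M$-modules: the renormalization $f(\mu)$ depends on the truncation parameter, and the ``passive'' entries on the finite side must be reconciled consistently with those suppressed on the infinite side. The equal-$\gamma$ hypothesis is essential precisely here, since it is what makes $\eta'$ a valid dominant partition and ensures that the asymptotic $\ell$-values $c-j+1$ for $j = 1, \ldots, n$ form a clean pairwise-distinct sequence, without which the finite-to-infinite matching would fail and extra submodules could appear ``at infinity.''
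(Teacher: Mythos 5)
Your overall strategy matches the paper's: truncate to finite-width patterns, reduce to irreducibility of finite Gelfand--Zetlin modules, then take the union over truncations. However, there is a genuine error in the choice of truncation. You pass from $V_M$ to $V_{M+n-1}$ on the grounds that the raising operators of $\mathfrak{g}_{-M+1,0}$ touch rows up to $M+n-1$, and then claim $V_{M+n-1}$ is isomorphic to the irreducible $\mathfrak{gl}_M$-module $L_{\eta'}$. This cannot be right: $V_M\subsetneq V_{M+n-1}$, both are stable under $\mathfrak{g}_{-M+1,0}$ (you yourself prove stability of $V_M$ under lowering, and stability under raising follows below), so $V_{M+n-1}$ has a proper nonzero $\mathfrak{g}_{-M+1,0}$-submodule and cannot be irreducible. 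The dimensions also disagree for $n>1$: $\dim V_{M+n-1}>\dim V_M=\dim L_{\eta'}$.

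The fix is to work with $V_M$ itself, and here is exactly where the hypothesis $\gamma_1=\cdots=\gamma_n=c$ does the work you attributed only to lowering operators. For the raising operator $E_{-i,-i+1}$ with $i\le M-1$, the candidate box additions are at $(i+j,j)$, $1\le j\le n$. If $j=1$ then $i+j=i+1\le M$, so row $M$ is never exceeded. If $j\ge 2$ and $i+j>M$, then the interlacing constraint $\mu^{(i+j)}_{j-1}\ge\mu^{(i+j)}_j+1$ would require $c\ge c+1$ (both entries being in a row $>M$, hence equal to $c$), which is impossible; that term vanishes. Thus $V_M$ is stable under all of $\mathfrak{g}_{-M+1,0}$, not just the lowering operators — and this is precisely the step where unequal $\gamma_j$'s would break down, since then $\mu^{(i+j)}_{j-1}=\gamma_{j-1}$ could exceed $\gamma_j+1$, allowing a box addition that escapes $V_M$. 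Once you take $V_M$ as the finite truncation, your identification with the irreducible $L_{\eta'}$ (via the base change and the dimension count the paper sketches by rearranging the GZ table) goes through, $w\ne0$ generates $V_M\ni\ket{\omega}$ under $\mathfrak{g}_{-M+1,0}$, and the rest of your argument (cyclicity of $\ket{\omega}$) completes the proof. Note the paper avoids the cyclicity step altogether by instead observing $\Yb^-_{\eta,\ga}=\bigcup_{N>n}W_N$ with each $W_N$ irreducible, which is slightly more economical.
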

\begin{proof}
For $N>n$, consider the subspace of $\Yb^-_{\eta,\ga}$
\begin{align*}
W_N=\mathrm{span}\{\ket{\mu}\in\Yb^-_{\eta,\ga}\mid 
\mu^{(i)}_{j}=\ga_j \quad (i>N,\ j=1,\cdots,n)\}\,.
\end{align*}
Because of the restriction $\ga_1=\cdots=\ga_n$, the subspace
$W_N$ is invariant under the action of $\mathfrak{g}_{-N+1,0}$.
The vector $\ket{\mu}\in W_N$ defined by $\mu^{(i)}_{j}=\eta_i$ ($1\le i\le n$) and  
$\mu^{(i)}_{j}=\ga_1$ ($i>n$) is a
$\mathfrak{g}_{-N+1,0}$-singular vector with the lowest weight 
$(\theta_{-N+1},\cdots,\theta_{0})=(0,\cdots,0,\eta_n-\ga_1,\cdots,\eta_1-\ga_1)$.
Moreover $W_N$ has the same dimension as that of the irreducible lowest 
weight module of the same lowest weight. To see this one can rearrange the table
as the usual GT pattern:
\smallskip
\begin{align*}
\begin{matrix}
\eta_1&&\cdots&\eta_n&&\gamma_1&&\cdots&&\gamma_1\\
&\mu^{(2)}_1&&&\mu^{(n+1)}_n&&\ddots\\
&&\ddots&&&\ddots&&\gamma_1\\
&&&&&&\mu^{(N)}_n\\
&&&&\ddots\\
&&&&&\mu^{(N)}_1
\end{matrix}
\end{align*}

\smallskip

Hence $W_N$ is $\mathfrak{g}_{-N+1,0}$-irreducible. 

By the definition we have $\Yb^-_{\eta,\ga}=\cup_{N>n}W_N$. 
The irreducibility follows from this. 
\end{proof}

By applying the involutive automorphism 
$\sigma(E_{i,j})=-E_{1-j,1-i}$ of $\glinf$, 
we obtain representations of the subalgebra $\glhf^+=\sigma(\gl^-_{\infty/2})$.
For later reference let us
write the relevant formulas for $\glhf^+$. 
 
For  $\mathfrak{g}_{\infty/2}^+$, 
we use the transposed GZ patterns $\mu=(\mu^{(i)}_{j})_{1\le i\le j}$ 
of depth $n$,  
\begin{align}
\mu=\,
\begin{matrix}
\mu^{(1)}_{1} & \cdots    &\mu^{(1)}_{n} &\mu^{(1)}_{n+1}&
\mu^{(1)}_{n+2}& \cdots\\
     & \ddots    &          &\vdots     &\vdots     &       \\
          &           &\mu^{(n)}_{n} &\mu^{(n)}_{n+1}
&\mu^{(n)}_{n+2}&\cdots \\
          &           &          & 0         & 0         &\cdots \\
          &           &          &           & 0         & \cdots \\
\end{matrix}\,
\label{GZhk2}
\end{align}
Let $\eta=(\eta_1,\cdots,\eta_n)$,
$\al=(\al_1,\cdots,\al_n)$
 be partitions 
such that 
$\eta_i\ge \al_i$, $i=1,\cdots,n$.
We set $\al_i=0$ for $i>n$. Let 
$\Yb^+_{\eta,\al}$ be the vector space with basis $\{\ket{\mu}\}$ where
the $\mu=(\mu^{(i)}_{j})_{1\le i\le j}$ satisfy 
$\mu^{(i)}_{j}=0$ if $i>n$ and 
\begin{align*}
&\mu^{(i)}_{i}=\eta_i\quad (i=1,\cdots,n)\,,
\\
&\mu^{(i)}_{j}=\al_i\quad (i=1,\cdots,n,\ j\gg 1)\,.
\end{align*}

\begin{prop}
The following formulas define a representation of $\glhf^+$ on 
$\Yb^+_{\eta,a}$:
\begin{align}
&E_{i,i+1} 
\ket{\mu}=\sum_{j=1}^n
\ket{\mu+\one^{(j)}_{i+j}}c^+_{j,i+j}(\mu)\,,
\quad
E_{i+1,i}
\ket{\mu}=\sum_{j=1}^n \ket{\mu-\one^{(j)}_{i+j}}c^-_{j,i+j}(\mu)\,,
\label{GZact3}\\
&E_{i,i}
\ket{\mu}=
\bigl(\sum_{j=1}^n \mu^{(j)}_{i+j}-\sum_{j=1}^n\mu^{(j)}_{i-1+j}-n\bigr)
\ket{\mu}\,,
\label{GZact4}
\end{align}
where $i\ge 1$, and 
\begin{align*}
&c^\pm_{j,i+j}(\mu)=\mp 
\frac{\prod_{k=1}^n(\ell^{(j)}_{i+j}-\ell^{(k)}_{i\mp1+k})}
{\prod_{1\le k(\neq j)\le n}(\ell^{(j)}_{i+j}-\ell^{(k)}_{i+k})}\,,\quad
\ell^{(j)}_{i+j}=\mu^{(j)}_{i+j}-j+1\,.
\end{align*}
\end{prop}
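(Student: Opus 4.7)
The plan is to deduce this proposition from the preceding one by transport along the involutive automorphism $\sigma(E_{i,j}) = -E_{1-j,1-i}$ of $\glinf$, which interchanges $\glhf^-$ and $\glhf^+$. Concretely, if $\rho^-\colon \glhf^- \to \End(\Yb^-_{\eta,\ga})$ is the representation constructed in the previous proposition, then $\rho^+ := \rho^- \circ \sigma^{-1}$ is a representation of $\glhf^+$ on the same underlying vector space, and it only remains to identify that space with $\Yb^+_{\eta,\al}$ and to rewrite the formulas in the appropriate coordinates.

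First I would verify that $\sigma$ is indeed an involutive Lie algebra automorphism of $\glinf$ carrying $\glhf^+$ onto $\glhf^-$: under $\sigma$, $E_{i,i+1} \mapsto -E_{-i,-i+1}$, $E_{i+1,i} \mapsto -E_{-i+1,-i}$, and $E_{i,i} \mapsto -E_{1-i,1-i}$. Next I would set up the bijection on basis vectors: a GZ pattern $\mu^+$ of depth $n$ as in \eqref{GZhk2} corresponds to a GZ pattern $\mu^-$ of width $n$ as in \eqref{GZhk1} by simply transposing the array, i.e. $(\mu^-)^{(i)}_j = (\mu^+)^{(j)}_i$. This bijection identifies the boundary conditions provided we take $\ga := \al$ in the previous proposition (with the roles of the vertical and horizontal boundary exchanged); the diagonal entries $\mu^{(i)}_i = \eta_i$ are common to both parametrizations. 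Then the operator $\rho^+(E_{i,i+1}) = -\rho^-(E_{-i,-i+1})$ acting on a transposed pattern becomes, after relabeling, exactly a sum of terms that add a box at position $(j,i+j)$, and a direct comparison of rational coefficients shows that the sign factors $\mp$ in \eqref{GZact3} match the factors $\pm$ in \eqref{GZact1-} after the automorphism introduces a global minus sign. A similar check handles $E_{i+1,i}$ and $E_{i,i}$; for the Cartan generator one uses $\sigma(E_{i,i}) = -E_{1-i,1-i}$ together with the telescoping identity underlying \eqref{GZact2-} and the shift of constant term that accounts for the $-n$ appearing in \eqref{GZact4}.

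The bookkeeping of the $-n$ shift in \eqref{GZact4} is the step I expect to require the most care. Under $\sigma$, the weight on the diagonal entry $E_{i,i}$ is mapped to $-E_{1-i,1-i}$ acting on $\Yb^-_{\eta,\ga}$, which by \eqref{GZact2-} (with index $i-1$) equals $-\sum_{j=1}^n(\mu^{(i-1+j)}_j-\mu^{(i+j)}_j)$ on a transposed pattern; rewriting back in terms of the transposed entries and using the stabilization $\mu^{(j)}_{i+j} \to \al_j = 0$ for $i \gg 1$ produces the form $\sum_{j=1}^n \mu^{(j)}_{i+j} - \sum_{j=1}^n \mu^{(j)}_{i-1+j} - n$, the extra $-n$ arising from a uniform shift absorbed into a base change $\ket{\mu} \mapsto f(\mu)\ket{\mu}$ as in the proof of the preceding proposition.

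Finally, the Serre/commutation relations for $\glhf^+$ follow automatically because $\sigma$ is an algebra homomorphism and $\rho^-$ is already a representation; no separate verification of brackets is needed. Thus the proposition reduces entirely to the bijective dictionary between $\Yb^+_{\eta,\al}$ and $\Yb^-_{\eta,\al}$ together with the sign/shift bookkeeping described above.
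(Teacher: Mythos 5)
Your overall strategy is exactly the paper's: the proposition is presented as the image under the involutive automorphism $\sigma(E_{i,j})=-E_{1-j,1-i}$ of the preceding $\glhf^-$-proposition, combined with transposing the GZ pattern so that $\Yb^+_{\eta,\al}$ is identified with $\Yb^-_{\eta,\al}$. Your sign bookkeeping for $E_{i,i\pm1}$ is also correct: the global minus from $\sigma$ flips the $\pm$ in $c^\pm$ to the $\mp$ appearing in \eqref{GZact3}.

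There is one genuine misstep in your account of the $-n$ in \eqref{GZact4}. Applying $\sigma$ to $E_{i,i}$ and transposing gives, straightforwardly, $\sum_{j=1}^n\bigl(\mu^{(j)}_{i+j}-\mu^{(j)}_{i-1+j}\bigr)$ with no constant term. The $-n$ does \emph{not} come from stabilization, nor can it come from a base change of the form $\ket{\mu}\mapsto f(\mu)\ket{\mu}$: conjugating by a diagonal change of basis does not alter the eigenvalue of a diagonal operator. As the paper states explicitly in the remark immediately following the proposition, the $-n$ is a deliberate \emph{twist} of the representation, replacing $E_{i,j}$ by $E_{i,j}+x\,\delta_{i,j}\cdot\mathrm{id}$ with $x=-n$; this is an automorphism-level modification, legitimate but chosen for later convenience rather than forced by $\sigma$ or the transposition.
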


We note that it is always possible to twist a given representation 
by changing $E_{i,j}$ to $E_{i,j}+x\delta_{i,j}\cdot\mathrm{id}$ 
for some $x\in \C$. 
Utilizing this freedom we have chosen $x=-n$
in \eqref{GZact4}, which will be convenient in the next subsection.


\subsection{Representations of $\glinf$}

In this subsection, we glue together the representations of $\glhf^\pm$
to define representations of the full algebra $\glinf$. 
Consider now a GZ pattern $\mu=(\mu^{(i)}_{j})_{i,j\ge 1}$ 
such that $\mu^{(i)}_{j}=0$ if $i>n$ and $j>n$,
that is
\begin{align}
\mu=\quad
\begin{matrix}
\mu^{(1)}_{1} &\cdots &\mu^{(1)}_{n} &\mu^{(1)}_{n+1} &\mu^{(1)}_{n+2} &\cdots\\
     \vdots         &\ddots &     \vdots & \vdots           &  \vdots          &\\
\mu^{(n)}_{1}&\cdots &\mu^{(n)}_{n} &\mu^{(n)}_{n+1} &\mu^{(n)}_{n+2} &\cdots\\
\mu^{(n+1)}_{1}&\cdots &\mu^{(n+1)}_{n} & 0    &0           &\cdots\\
\mu^{(n+2)}_{1} &\cdots &\mu^{(n+2)}_{n} & 0    &0           &\cdots\\
\vdots &\cdots &\vdots  & 0    &0           &\cdots\\
\end{matrix}\,
\label{GZhk3}
\end{align}
We say $\mu$ has a hook shape of width $n$.  
We assume further that 
\begin{align}
&\mu^{(i)}_{j}=\al_i\quad \text{for $i=1,\cdots,n$, $j\gg 1$},
\label{stab1}\\
&\mu^{(i)}_{j}=\ga_j\quad \text{for $j=1,\cdots,n$, $i\gg 1$}.
\label{stab2}
\end{align}
Let $\Yb_{\al,\ga}$ be the vector space with basis 
$\{\ket{\mu}\}$, where $\mu$
runs over hook-shape GZ patterns of width $n$, 
satisfying \eqref{stab1}, \eqref{stab2}.

\begin{prop}
Define
\begin{align}
&E_{0,1}
\ket{\mu}=\sum_{j=1}^n \ket{\mu+\one^{(j)}_{j}}c^+_{j,j}(\mu)\,,
\quad
E_{1,0}
\ket{\mu}=\sum_{j=1}^n \ket{\mu-\one^{(j)}_{j}}c^-_{j,j}(\mu)\,,
\label{GZact0}\\
&
c^+_{j,j}(\mu)=\frac{1}{\prod_{k(\neq j)}(\ell^{(j)}_{j}-\ell^{(k)}_{k})}\,,
\quad 
c^-_{j,j}(\mu)
=-
\frac{\prod_{k=1}^n(\ell^{(j)}_{j}-\ell^{(k+1)}_{k})
(\ell^{(j)}_{j}-\ell^{(k)}_{k+1})}
{\prod_{1\le k(\neq j)\le n}(\ell^{(j)}_{j}-\ell^{(k)}_{k})}\,,
\label{GZact00}
\end{align}
with
\begin{align*}
\ell^{(k)}_{k}=\mu^{(k)}_{k}-k+1\,.
\end{align*}
Then the above formulas along with 
\eqref{GZact1-}, \eqref{GZact2-},\eqref{GZact3}, \eqref{GZact4}
give a representation of $\glinf$ on $\Yb_{\al,\ga}$. 
\end{prop}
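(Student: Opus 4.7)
\emph{Proof strategy.} The preceding two propositions, applied to the upper part $\{\mu^{(i)}_j\mid i\le n\}$ and lower part $\{\mu^{(i)}_j\mid j\le n\}$ of a hook pattern \eqref{GZhk3}, already give commuting actions of $\glhf^+$ and $\glhf^-$ on $\Yb_{\al,\ga}$. The task is to verify that adjoining the bridge operators $E_0=E_{0,1}$ and $F_0=E_{1,0}$ from \eqref{GZact0},\eqref{GZact00} produces a representation of all of $\glinf$.

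Using the Chevalley presentation of $\glinf$, the plan reduces to checking: (a) $[H_i,E_0]=a_{i,0}E_0$, $[H_i,F_0]=-a_{i,0}F_0$ for all $i$; (b) $[E_0,F_0]=H_0=E_{0,0}-E_{1,1}$; (c) $[E_i,F_0]=0=[F_i,E_0]$ for $i\neq 0$; (d) the Serre relations $[E_0,[E_0,E_{\pm1}]]=[F_0,[F_0,F_{\pm1}]]=0$. For a fixed $\ket{\mu}$ and any such relation, both sides involve only finitely many entries of $\mu$ and only finitely many basis vectors, so the verification is essentially finite-dimensional. The coefficients \eqref{GZact00} are designed so that $E_0,F_0$ arise as the limit of the classical $\gl_N$ generators $E_{-i,-i+1},E_{-i+1,-i}$ at the boundary index $i=0$ after the stabilization conditions \eqref{stab1},\eqref{stab2} and the depth-$n$ condition \eqref{depth} cause the long products in \eqref{GZact1},\eqref{GZact2} to collapse. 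Consequently (a), (c) and (d) are inherited from the corresponding relations in the classical GZ representation by truncating at $N\gg 1$ and passing to the limit, in the same spirit as the proof of Proposition~\ref{irr-gminus}.

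The decisive relation is (b). Applying \eqref{GZact0},\eqref{GZact00} directly gives
\begin{align*}
[E_0,F_0]\ket{\mu}=D(\mu)\ket{\mu},\qquad
D(\mu)=\sum_{j=1}^{n}\Bigl(c^+_{j,j}(\mu)c^-_{j,j}(\mu+\mathbf{1}^{(j)}_j)-c^-_{j,j}(\mu)c^+_{j,j}(\mu-\mathbf{1}^{(j)}_j)\Bigr),
\end{align*}
while \eqref{GZact2-} at $i=0$ and \eqref{GZact4} at $i=1$ give
\begin{align*}
H_0\ket{\mu}=\Bigl(2\sum_{j=1}^n\mu^{(j)}_j-\sum_{j=1}^n\mu^{(j+1)}_j-\sum_{j=1}^n\mu^{(j)}_{j+1}+n\Bigr)\ket{\mu}.
\end{align*}
The problem is thus a rational-function identity in the $n$ variables $x_j=\ell^{(j)}_j$, with the remaining entries of $\mu$ treated as parameters. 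The left-hand side has apparent simple poles on the diagonals $x_j=x_k$ ($j\neq k$); I expect these to cancel after pairing the two summands, leaving $D(\mu)$ a symmetric polynomial of degree at most one in each $x_j$ whose value is then pinned down by matching leading coefficients, or equivalently by specializing all but one of the $x_j$ to infinity to reduce to the $n=1$ case, which is a direct check. Alternatively, (b) can be deduced from the identity $[E_{0,1},E_{1,0}]=H_0$ inside a classical $\gl_{2N}$-GZ representation by the same stabilization argument used for (a), (c), (d); this bypasses the residue calculation at the cost of carefully tracking how the boundary factors in \eqref{GZact1},\eqref{GZact2} combine as $N\to\infty$. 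In either case, matching the bridge commutator with the diagonal Cartan element is the main obstacle; everything else is finite-dimensional GZ calculus.
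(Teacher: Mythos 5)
Your treatment of (b) is in the same spirit as the paper's: the paper also reduces the diagonal coefficient of $[E_0,F_0]\ket{\mu}$ to a rational-function identity in the variables $\ell^{(j)}_j$, and evaluates it by writing it as a sum of residues of the function $\prod_{k=1}^n\frac{(z-\ell^{(k)}_{k+1}+1)(z-\ell^{(k+1)}_{k}+1)}{(z-\ell^{(k)}_{k}+1)(z-\ell^{(k)}_{k})}$; your "pole cancellation plus degree/leading-coefficient" argument is a paraphrase of the same residue-at-infinity computation. (As an aside: you have a sign flip in your $D(\mu)$ — what you wrote is the diagonal coefficient of $[F_0,E_0]$, not $[E_0,F_0]$.)

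The genuine gap is in your claim that (a), (c), (d) — and your "alternative" route to (b) — follow by stabilizing from a classical $\gl_{N}$ or $\gl_{2N}$ Gelfand--Zetlin representation. The bridge operators $E_{0,1}$, $E_{1,0}$ are \emph{not} obtained as limits of the GZ formulas \eqref{GZact1}, \eqref{GZact2}, because the hook pattern \eqref{GZhk3} is not a truncation of any finite triangular GZ pattern: to accommodate $E_{0,1}$ one would need $\mathfrak{g}_{-N+1,M}$ with $M\ge 1$, whose GZ array has $N+M$ triangular rows, and there is no natural way to embed the hook array with fixed zeros in both the $i>n$, $j\le n$ and $i\le n$, $j>n$ regions into such a triangle. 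That is exactly why the paper does \emph{not} invoke stabilization here; it instead checks $[E_0,F_i]=[E_i,F_0]=0$ for $i\neq 0$ directly by verifying the ratio identities for $c^{\pm}_{j,j}$ under shifts $\mu\mapsto\mu\pm\mathbf{1}^{(k)}_{i+k}$, $\mu\mapsto\mu\pm\mathbf{1}^{(i+k)}_k$, and it checks the Serre relations involving $E_0,F_0$ by a direct computation. Your (a) is easy by inspection (since $E_0$ only shifts diagonal entries and the diagonal operators $H_i$ for $|i|\ge 2$ do not see them), but (c) and (d) really need the direct coefficient check; "inherited by stabilization" as stated would not produce a valid proof.
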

\begin{proof}
The only non-trivial relations to check are 
$[E_i,F_j]=\delta_{i,j}H_i$
for $i=0$ or $j=0$, and the Serre relations involving them. 
 First we check the former. 
For the relations $[E_0,F_i]=[E_i,F_0]=0$ to hold 
for $i\neq 0$, we must have that
\begin{align*}
&\frac{c^+_{j,j}(\mu+\one^{(k)}_{1+k})}{c^+_{j,j}(\mu)}
=\frac{\ell^{(k)}_{1+k}-\ell^{(j)}_{j}+1}{\ell^{(k)}_{1+k}-\ell^{(j)}_{j}}\,,
\\
&\frac{c^+_{j,j}(\mu+\one^{(1+k)}_{k})}{c^+_{j,j}(\mu)}
=\frac{\ell^{(1+k)}_{k}-\ell^{(j)}_{j}+1}{\ell^{(1+k)}_{k}-\ell^{(j)}_{j}}\,,
\end{align*}
and that 
$c^\pm_{j,j}(\mu\pm\one^{(k)}_{i+k})
=c^\pm_{j,j}(\mu\pm\one^{(i+k)}_{k})=c^\pm_{j,j}(\mu)$ 
in all other cases. 
These relations can be verified using \eqref{GZact00}. 

A similar calculation shows that, in $[E_0,F_0]\ket{\mu}$, all terms cancel except 
\begin{align*}
\sum_{j=1}^n\left(c^-_{j,j}(\la)c^+_{j,j}(\mu+\one^{(j)}_{j})
-c^+_{j,j}(\mu)c^-_{j,j}(\mu-\one^{(j)}_{j})\right)\,\ket{\mu}\,.
\end{align*}
Substituting  \eqref{GZact00} we find that the coefficient in front
of $\ket{\mu}$ can be written as
\begin{align*}
-&\sum_{j=1}^n \left(\mathop\mathrm{res}_{z=\ell^{(j)}_{j}}
+\mathop\mathrm{res}_{z=\ell^{(j)}_{j}-1}\right)\prod_{k=1}^n
\frac{z-\ell^{(k)}_{k+1}+1}{z-\ell^{(k)}_{k}+1}
\frac{z-\ell^{(k+1)}_{k}+1}{z-\ell^{(k)}_{k}}
\\
&=\sum_{k=1}^n(\ell^{(k)}_{k+1}+\ell^{(k+1)}_{k}-2\ell^{(k)}_{k})-n\,.
\end{align*}
Comparing this with $H_0=E_{0,0}-E_{1,1}$
we obtain $[E_0,F_0]\ket{\mu}=H_0\ket{\mu}$.

Finally the Serre relations involving $E_{0}$ or $F_{0}$
can be checked by a 
tedious but straightforward calculation. 
\end{proof}
\begin{prop}\label{GLMODULE}
If $\ga_1=\cdots=\ga_n=c$, 
$\Yb_{\al,\ga}$ is an irreducible $\glinf$-module with the
lowest weight vector $\ket{\mu^{(n)}(\al,c)}$, where
\begin{align*}
\mu^{(n)}(\al,c)^{(i)}_{j}=
\begin{cases}
\max(\al_i,c)& (1\le i,j\le n);\\
\al_i & (1\le i\le n,\ j>n);\\
c & (i>n,\ 1\le j\le n).
\end{cases}
\end{align*}
If $\al_1\ge\cdots\ge \al_k\ge c\ge \al_{k+1}\ge\cdots\ge \al_n$, then 
$\Yb_{\al,\ga}$ is isomorphic to the irreducible lowest weight 
$\glinf$-module
$\mathcal{W}_{\theta^{(n)}(\al,c)}$, with
the lowest weight
\begin{align}
\theta^{(n)}(\al,c)_i=
\begin{cases}
0 & (i\le -k);\\
\al_{-i+1}-c & (-k+1\le i\le 0);\\
\al_{n-i+1}-c-n & (1\le i\le n-k);\\
-n & (i\ge n-k+1).\\
\end{cases}
\label{GZ-hwt}
\end{align}
\end{prop}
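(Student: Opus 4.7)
The plan is to establish in sequence: (i) the vector $|v\rangle:=|\mu^{(n)}(\al,c)\rangle$ is a lowest weight vector of $\Yb_{\al,\ga}$, (ii) its weight equals $\theta^{(n)}(\al,c)$, and (iii) $\Yb_{\al,\ga}$ is irreducible. The identification $\Yb_{\al,\ga}\simeq\mathcal{W}_{\theta^{(n)}(\al,c)}$ then follows from the uniqueness of irreducible lowest weight $\glinf$-modules.

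\textbf{Vacuum property (i).} I would show $F_i|v\rangle=0$ for every $i\in\Z$, using \eqref{GZact1-} for the negative $F_i$, \eqref{GZact0} for $F_0$, and \eqref{GZact3} for positive $F_i$. The key observation is that every box-removal appearing on the right-hand side turns $|v\rangle$ into a tableau that violates either $\mu^{(p)}_q\ge\mu^{(p)}_{q+1}$ or $\mu^{(p)}_q\ge\mu^{(p+1)}_q$, and hence represents the zero vector of $\Yb_{\al,\ga}$. The case split is by the location of the box $(p,q)$: inside the $n\times n$ head, adjacent entries along each row and column are identical (both equal $\max(\al_p,c)$); along the arm, $\mu^{(p)}_q=\al_p=\mu^{(p)}_{q+1}$; along the leg, $\mu^{(p)}_q=c=\mu^{(p+1)}_q$; at the head/arm interface ($q=n+1$) the value $\al_p$ coincides with the next arm entry and at the head/leg interface ($p=n+1$) the value $c$ coincides with the next leg entry. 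In every subcase monotonicity breaks after subtracting one, so no information about the coefficients $c^\pm$ is needed.

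\textbf{Weight computation (ii).} Applying \eqref{GZact2-} for $i\le 0$ and \eqref{GZact4} for $i\ge 1$, the eigenvalue of $E_{i,i}$ on $|v\rangle$ is a telescoping sum over $j=1,\dots,n$. With $\mu=\mu^{(n)}(\al,c)$ only one summand survives, namely the one whose diagonal crosses the head/leg boundary (for $i\le 0$) or the head/arm boundary (for $i\ge 1$). The former gives $\max(\al_{1-i},c)-c$, and the latter $\al_{n-i+1}-\max(\al_{n-i+1},c)-n$. Splitting each expression according to whether $\al_\ast\ge c$ or $\al_\ast<c$, i.e., comparing with the index $k$, reproduces the four cases of \eqref{GZ-hwt}.

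\textbf{Irreducibility (iii).} Following the argument of Proposition \ref{irr-gminus}, for $N>n$ define the finite-dimensional subspace $W_N\subset\Yb_{\al,\ga}$ spanned by those $|\mu\rangle$ with $\mu^{(i)}_j=\al_i$ for all $j>N$, $i\le n$ and $\mu^{(i)}_j=c$ for all $i>N$, $j\le n$. Inspecting \eqref{GZact1-}, \eqref{GZact0}, \eqref{GZact3} shows that $W_N$ is preserved by the Chevalley generators $E_p,F_p$ with $|p|\le N-n$, hence by $\mathfrak{g}_{-(N-n),N-n+1}\simeq\gl_{2(N-n)+2}$. The main step is to identify $W_N$ as an irreducible Gelfand--Zetlin module for this finite-rank algebra, by rearranging the hook-shaped width-$n$ pattern into a standard triangular GZ pattern of rank $2(N-n)+2$, as in the picture at the end of the proof of Proposition \ref{irr-gminus}: one places the stabilized arm values $\al_1,\dots,\al_n$ and the stabilized leg value $c$ on the top row, and checks that the coefficients in \eqref{GZact1-}--\eqref{GZact0}, \eqref{GZact3}--\eqref{GZact4} reduce to the finite-rank ones \eqref{GZact1}--\eqref{xxx}. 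Granted this, $\Yb_{\al,\ga}=\bigcup_N W_N$ is the ascending union of irreducible finite-rank submodules, and the standard argument (a non-zero submodule meets some $W_{N_0}$, hence equals it, and then equals every $W_N$ for $N\ge N_0$) delivers irreducibility. The main obstacle I anticipate is precisely this finite-rank identification: the hypothesis $\gamma_1=\cdots=\gamma_n=c$ is exactly what lets one pad the triangular pattern by a constant row, and one must verify that the non-standard junction formula \eqref{GZact00} for $E_0,F_0$ is the correct specialization of the generic GZ coefficient across the head/arm/leg corner. Once this is secured, (i), (ii), (iii) together yield $\Yb_{\al,\ga}\simeq\mathcal{W}_{\theta^{(n)}(\al,c)}$.
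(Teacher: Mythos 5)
Your steps (i) and (ii) are fine and match what the paper does implicitly: the minimal pattern has equal adjacent entries along every row and column of the hook, so every box-removal violates monotonicity, and the telescoping weight computation reproduces \eqref{GZ-hwt}. But step (iii) has a genuine gap at the claim that $W_N$ is finite-dimensional. In $\Yb_{\al,\ga}$, unlike in $\Yb^-_{\eta,\ga}$ of Proposition~\ref{irr-gminus}, the diagonal entries $\mu^{(i)}_i$ are \emph{not} fixed. After imposing stabilization at index $N$ you still have $\mu^{(1)}_1\ge\mu^{(1)}_2\ge\cdots\ge\mu^{(1)}_{N+1}=\al_1$ with no upper bound on $\mu^{(1)}_1$, so $W_N$ is infinite-dimensional. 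Consequently $W_N$ cannot be a single irreducible finite-rank GZ module, and the ``ascending union of irreducible finite-dimensional submodules'' scheme collapses. (Even setting dimensionality aside, the entry count $n(2N-n)$ of the hook patterns in $W_N$ does not match the $\binom{2(N-n)+3}{2}$ entries of a rank-$2(N-n)+2$ triangular GZ pattern, so the rearrangement you anticipate is not of the same combinatorial type.) The step in Proposition~\ref{irr-gminus} that you want to imitate relies crucially on the fixed diagonal $\mu^{(i)}_i=\eta_i$ to make $W_N$ finite-dimensional; that crutch is absent here.

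The paper circumvents this by exhausting the hook pattern along \emph{off-diagonals}, not by truncating far-out entries. It introduces the infinite-dimensional subalgebras $\mathfrak{a}_N=\mathfrak{g}_{-\infty,N}$ and, for each partition $\eta$ placed on the $N$-th off-diagonal, the $\mathfrak{a}_N$-module $\mathfrak{X}_{N,\eta}$ consisting of patterns with $\mu^{(j)}_{N+j}=\eta_j$. One then proves $\mathfrak{X}_{N,\eta}$ is $\mathfrak{a}_N$-irreducible by induction on $N$: the base case $N=0$ is exactly Proposition~\ref{irr-gminus}, and the inductive step writes $\mathfrak{X}_{N,\eta}=\bigoplus_\xi\mathfrak{X}_{N-1,\xi}$ over interlacing $\xi$, uses mutual inequivalence of the summands, and then connects them by $E_{N-1},F_{N-1}$. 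Irreducibility of $\Yb_{\al,\ga}$ is then read off from this exhaustion. If you want to keep a filtration-by-finite-pieces philosophy, you would at minimum need to fix the entries on some off-diagonal (or on the diagonal itself) so that each slice is finite-dimensional, and then argue across slices — which is essentially what the paper's induction does.

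Your concluding deduction (i)+(ii)+(iii) $\Rightarrow$ isomorphism with $\mathcal{W}_{\theta^{(n)}(\al,c)}$ is of course correct once (iii) is secured.
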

\begin{proof}

For $N\ge0$, consider the Lie subalgebra $\mathfrak{a}_N=
\mathfrak{g}_{-\infty,N}$ spanned by $E_{i,j}$ with $i,j\le N$. 
For each partition 
$\eta=(\eta_1,\cdots,\eta_n)$ 
such that $\eta_i\ge c$ if $i+N\le n$,  
we consider the $\mathfrak{a}_N$-module 
$\mathfrak{X}_{N,\eta}$ 
given as follows. 
As a linear space, it is spanned by vectors $\ket{\mu}$ where
$\mu=(\mu^{(i)}_{j})_{j\le i+N}$ runs over GZ patterns of width $n$ such that 
\begin{align*}
&\mu^{(j)}_{N+j}=\eta_j \quad \text{for $j=1,\cdots,n$},\\
&\mu^{(i)}_{j}=c \quad \text{for $1\le j\le n$, $i\gg 1$.}
\end{align*}
The action of the generators of $\mathfrak{a}_N$
is defined by the same 
formula as \eqref{GZact1}--\eqref{GZact4}. 
We show that $\mathfrak{X}_{N,\eta}$ 
is an irreducible 
$\mathfrak{a}_{N}$-module for all $N$ and $\eta$. 
The irreducibility of $\Yb_{\al,\ga}$ 
is a simple consequence of this assertion. 

For $N=0$ we have $\mathfrak{a}_0=\glhf^-$, 
and $\mathfrak{X}_{0,\eta}=\Yb_{\eta,c}^-$ 
is an
irreducible $\mathfrak{a}_{0}$-module 
by Proposition \ref{irr-gminus}. 
Assume by induction that each $\mathfrak{X}_{N-1,\xi}$
is 
$\mathfrak{a}_{N-1}$-irreducible for $N>0$. 
By the definition, 
we have a direct sum decomposition into subspaces
$\mathfrak{X}_{N,\eta}=\oplus_\xi \mathfrak{X}_{N-1,\xi}$, 
where $\xi_1\ge\eta_1\ge\xi_2\ge\eta_2\ge\cdots\ge\xi_n\ge\eta_n$.    
Each $\mathfrak{X}_{N-1,\xi}$ 
is an irreducible 
$\mathfrak{a}_{N-1}$-module, which are mutually inequivalent. 
Therefore, if $W\subset \mathfrak{X}_{N,\eta}$ is  
a non-zero $\mathfrak{a}_{N}$-submodule, then  
we have $W=\oplus_\xi W\cap \mathfrak{X}_{N-1,\xi}$  
as $\mathfrak{a}_{N-1}$-module. 
If $W\cap \mathfrak{X}_{N-1,\xi}\neq 0$, 
then acting with $E_{N-1},F_{N-1}$ 
we obtain that 
$W\cap
 \mathfrak{X}_{N-1,(\xi_1,\cdots,\xi_i\pm1,\cdots,\xi_n)}\neq 0$ for each $i$, 
as long as the condition $\eta_{i-1}\ge\xi_i\pm 1\ge \eta_i$ is 
not violated. 
It is now easy to see that $W=\mathfrak{X}_{N,\eta}$. 
The proof is over. 
\end{proof}
Note that the lowest weight $\theta^{(n)}(\al,c)_i$ is increasing, i.e., ``anti-dominant", except
\begin{align*}
\theta^{(n)}(\al,c)_0=\al_1-c\geq 0>-n\geq\theta^{(n)}(\al,c)_1=\al_n-c-n.
\end{align*}

\subsection{Degeneration of the algebra $\mathcal{E}$ and $\glinf$}
In this subsection we 
examine the degeneration of the algebra $\E$ and its modules 
when one of the parameters $q_i$ tends to $1$.

In order to discuss the limit, it is convenient to introduce the
elements $h_m\in \E_{q_1,q_2,q_3}$ ($m \neq0$) via
\begin{align*}
\psi^\pm(z)=\psi^\pm_0
\exp\Bigl(\mp \sum_{\pm m>0}\frac{\ga_m}{m} h_mz^{-m}\Bigr)\,,
\quad \ga_m=\prod_{i=1}^3(1-q_i^m)\,.
\end{align*}
We have $[h_m,e(z)]=z^m\, e(z)$, $[h_m,f(z)]=-z^m\, f(z)$.
Set further $\psi^+_0=q_1^{\kappa_+}$, $\psi^-_0=q_1^{\kappa_-}$.
In the limit 
\begin{align}
q_1\to1\,,\quad  q_2\to q\,,\quad q_3\to q^{-1}\,
\quad (q\in\C^\times), 
\label{limit}
\end{align}
the algebra $\E_{q_1,q_2,q_3}$ reduces
to the Lie algebra $\mathfrak{d}_{q,\kappa,0}$ 
which has been mentioned already.  
The algebra $\mathfrak{d}_{q,\kappa,0}$
is the associative algebra (viewed as a Lie algebra) generated
by $Z^{\pm1},D^{\pm1}$ with $DZ=qZD$, extended by 
a central element $\kappa$:
\begin{align*}
[Z^{i_1}D^{j_1},Z^{i_2}D^{j_2}]=(q^{j_1i_2}-q^{j_2i_1})Z^{i_1+i_2}D^{j_1+j_2}
+ i_1q^{-i_1j_1}\delta_{i_1+i_2,0}\delta_{j_1+j_2,0}\cdot\kappa\,.
\end{align*}
Writing the limit of the generators $e_m,f_m,h_m$ with bars,  
we have the identification 
\begin{align}
&(1-q)\bar{e}_m=D^mZ,\quad
-(1-q^{-1})\bar{f}_m=Z^{-1}D^m,\,
\label{bar1}\\
&(1-q^{-m})\bar{h}_m=D^m\quad (m\neq 0),\quad
\kappa_+-\kappa_-=\kappa\,.
\label{bar2}
\end{align}

Let $\gl_{\infty,\kappa}$ be the Lie algebra
defined by the symbols $:E_{i,j}:$
and a central element $\kappa$, with relations
\begin{align*}
&\gl_{\infty,\kappa}=\{\sum_{i,j\in\Z}a_{i,j}:E_{i,j}:\mid \exists 
N>0,\ a_{i,j}=0\ \text{ for $|i-j|>N$}\}
\oplus \C\kappa\,,
\\
&[\sum_{i,j}a_{i,j}:E_{i,j}:,\sum_{k,l}b_{k,l}:E_{k,l}:]
=\sum\Bigl(\sum_{k}a_{i,k}b_{k,j}-\sum_k b_{i,k}a_{k,j}\Bigr):E_{i,j}:
\\
&\qquad\qquad\qquad\qquad\qquad\qquad\quad+
\Bigl(\sum_{i\le 0<j} 
a_{i,j}b_{j,i}
-\sum_{i> 0\ge j}
a_{i,j}b_{j,i}\Bigr)\kappa\,.
\end{align*}
It is straightforward to verify that the map
\begin{align}
&D^m Z\mapsto \sum_{i\in\Z}
:E_{i,i+1}:
u^mq^{-im}\,,
\quad 
Z^{-1}D^m\mapsto \sum_{i\in\Z}
:E_{i+1,i}:
u^mq^{-im}\,,
\label{lim-h1}\\
&D^m\mapsto 
\sum_{i\in \Z}:E_{i,i}:u^m q^{-im}
+\frac{u^m}{1-q^m}\kappa\quad(m\neq0),
\quad \kappa\mapsto \kappa\,
\label{lim-h2}
\end{align}
gives an embedding of Lie algebras
\begin{align*}
\iota_u~:~\mathfrak{d}_{q,\kappa}\longrightarrow
 \mathfrak{gl}_{\infty,\kappa}\,.
\end{align*}
Here $u$ is an arbitrary non-zero complex number. 

We view $\gl_\infty$ as a subalgebra of $\gl_{\infty,\kappa}$ 
by $E_{i,j}\mapsto :E_{i,j}:-\delta_{i,j}\theta(i> 0)\kappa$,
where $\theta(i> 0)=1$ if $i> 0$ and $0$ otherwise. 
The action of $\glinf$ on $\mathcal{Y}_{\al,\ga}$ 
can be extended to that of $\gl_{\infty,\kappa}$,
since acting with the latter on GZ patterns of hook shape 
only finitely many terms are produced.

Now let us turn to the 
Macmahon module $\mathcal{M}_{\al,\beta,\gamma}(u,K)$.
In the limit \eqref{limit}, the eigenvalues \eqref{PSIABG}
of $\psi^\pm(z)$ tend to 
\begin{align*}
\frac{1-K_1u/z}{1-q^{-\beta_1}u/z}
\prod_{i=1}^\infty
\frac{1-q^{i-\beta_i}u/z}{1-q^{i-\beta_{i+1}}u/z},
\end{align*}
where $K_1$ denotes the limiting value of $K$.
In order that this limit be $1$,  
we are forced to take 
$\beta_i=0$ for all $i$ and $K_1=1$.
Assuming this, 
consider the action of $e(z)$, $f(z)$
which we write in the form 
\begin{align*}
&e(z)|\boldsymbol{\la}\rangle=
\sum_{i,k=1}^\infty C^+_{i,k}(\boldsymbol{\la})
|\boldsymbol{\la}+1^{(k)}_i\rangle, 
\quad
f(z)|\boldsymbol{\la}\rangle=
\sum_{i,k=1}^\infty C^-_{i,k}(\boldsymbol{\la})
|\boldsymbol{\la}-1^{(k)}_i\rangle. 
\end{align*}
Let us compute the action of $e(z)$ in the limit $q_1\rightarrow1$.
We use \eqref{EACT} in the form
\begin{align*}
e(z)|\lab\rangle=\sum_{i,k=1}^\infty\frac1{1-q_1}\psi_{\la^{(k)},i}\psi^{(k-1)}_\lab(u/z)
\delta(q_1^{\la^{(k)}_i}q_3^{i-1}u_k/z)|\lab+1^{(k)}_i\rangle,
\end{align*}
where $\la^{(k)}_i=\mu^{(k)}_i-\al_k$ and $u_k=uq_1^{\al_k}q_2^{k-1}$.
We have for $q_1\to1$
\begin{align*}
&\frac{1}{1-q_1}\psi_{\la^{(k)},i}=
\begin{cases}
O(1)& (i\neq 1);\\
O\bigl(\frac{1}{1-q_1}\bigr)&(i=1),\\
\end{cases}\\
&\frac{1-q_3^{k-i}}{1-q_3^{1-i}}\psi^{(k-1)}_\lab(u/z)
\delta(q_1^{\la^{(k)}_i}q_3^{i-1}u_k/z)
=O(1),
\end{align*}
so that 
\begin{align*}
C^+_{i,k}(\boldsymbol{\la})=
\begin{cases}
O(1)& (i\neq k);\\
O\bigl(\frac{1}{1-q_1}\bigr) & (i=k).
\end{cases}
\end{align*}
Similarly, using
\begin{align*}
f(z)|\lab\rangle=\sum_{i,k=1}^\infty\frac{q_1}{1-q_1}\psi'_{\la^{(k)},i}\psi^{'(k+1)}_\lab(u/z)
\delta(q_1^{\la^{(k)}_i}q_3^{i-1}u_k/z)|\lab+1^{(k)}_i\rangle,
\end{align*}
we find
\begin{align*}
C^-_{i,k}(\boldsymbol{\la})=
\begin{cases}
O(1)& (i\neq k);\\
O\bigl(1-q_1) & (i=k).
\end{cases}
\end{align*}
Hence, passing to the new basis
$\ket{\boldsymbol\la}=(1-q_1)^{p(\boldsymbol\la)}\ket{\boldsymbol\la}^{new}$ 
where $p(\boldsymbol\la)=\sum_{i=1}^\infty\la^{(i)}_i$,    
the matrix coefficients for $e(z)$, $f(z)$ have well defined limits. 
Clearly the same is true about the quotient module
$\mathcal{N}^{n,n}_{\al,\emptyset,\gamma}(u)$.

Thus we have shown the first part of the following:

\begin{prop}\label{Nnn}
$(i)$ If $\beta=\emptyset$, then the Macmahon module 
$\mathcal{M}_{\al,\emptyset,\gamma}(u,q_2^nq_3^n)$
and its irreducible quotient
$\mathcal{N}^{n,n}_{\al,\emptyset,\gamma}(u)$
have well-defined limits. 

$(ii)$ Assume $\ga_1=\cdots=\ga_n=c$, $\kappa=n$.  
Then as $\mathfrak{d}_{q,\kappa,0}$-modules,
the limit of $\mathcal{N}^{n,n}_{\al,\emptyset,\gamma}(u)$ 
is isomorphic to the pullback 
$\iota^*_u\left(\mathcal{W}_{\theta^{(n)}(\al,c)}\right)$
of the $\mathfrak{gl}_{\infty,\kappa}$-module given in \eqref{GZ-hwt}. 
\end{prop}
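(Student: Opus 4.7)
For (i), the argument is already given in the paragraph preceding the proposition: after the rescaling $\ket{\lab}=(1-q_1)^{p(\lab)}\ket{\lab}^{new}$ with $p(\lab)=\sum_{i\ge1}\la^{(i)}_i$, all matrix coefficients of $e(z),f(z),\psi^\pm(z)$ on $\mathcal{M}_{\al,\emptyset,\gamma}(u,q_2^nq_3^n)$ have finite limits. Since $\mathcal{M}^{n,n,1}_{\al,\emptyset,\gamma}(u)$ is spanned by those $\ket{\lab}$ with $Y_\mub\supset Y_{\omegab_1}$, it too has a well-defined limit; the quotient $\mathcal{N}^{n,n}_{\al,\emptyset,\gamma}(u)$ therefore has a well-defined limit with basis $\{\ket{\mub}^{new}\}_{\mub\in P^n(\al,c)}$.

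For (ii), the plan is to identify the limit with $\iota^*_u(\mathcal{W}_{\theta^{(n)}(\al,c)})$ via the natural bijection of bases given by identifying PP entries $\mu^{(k)}_i$ with GZ entries of equal indices. The boundary conditions $\mu^{(k)}_i\to\al_k$ ($i\to\infty$), $\mu^{(k)}_i\to\ga_i$ ($k\to\infty$) for $\mub\in P^n(\al,c)$ translate to \eqref{stab1}, \eqref{stab2} with $\ga_j=c$ for $1\le j\le n$, while the hook condition $\mu^{(n+1)}_{n+1}=0$ becomes $\mu^{(i)}_j=0$ for $i,j>n$. Under this bijection, the minimal PP $\omegab$ of \eqref{OMEGA} matches the GZ lowest weight vector $\mu^{(n)}(\al,c)$ of Proposition \ref{GLMODULE}. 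The relation $(\psi^+_0)^{-1}\psi^-_0=K=q_2^nq_3^n=q_1^{-n}$ forces $\kappa=\kappa_+-\kappa_-=n$ in the limit.

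The vector $\ket{\omegab}^{new}$ is annihilated by $\bar f(z)$ (inherited from $f(z)\ket{\omegab}=0$) and cyclically generates the limit module: in $\mathcal{N}^{n,n}_{\al,\emptyset,\gamma}(u)$ the vector $\ket{\omegab}$ is cyclic for generic $q_1$, and the matrix coefficients of iterated $\bar e(z)$-applications connecting $\ket{\omegab}^{new}$ to $\ket{\mub}^{new}$ are finite and generically nonzero. Hence the limit is a cyclic lowest weight $\mathfrak{d}_{q,\kappa,0}$-module. Computing the $\bar h_m$-eigenvalue on $\ket{\omegab}^{new}$ from the limit of the shell formula \eqref{FINALPSI}, and the $D^m$-eigenvalue on $\mu^{(n)}(\al,c)$ from \eqref{lim-h2} applied to the weight \eqref{GZ-hwt}, one checks that both agree. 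A natural intertwiner is then defined by sending $\ket{\omegab}^{new}$ to the GZ lowest weight vector and extending by the $\iota_u$-image action; it is an isomorphism by cyclicity on both sides combined with the character identity (both bases indexed by $P^n(\al,c)$).

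The main obstacle is the well-definedness of this intertwiner: one must match the limit matrix elements of $e(z),f(z)$ in the rescaled basis with the coefficients of $\iota_u(D^mZ)$ and $\iota_u(Z^{-1}D^m)$ acting on the corresponding GZ vector via \eqref{GZact1-}--\eqref{GZact00}. The off-diagonal contributions ($i\ne k$) should produce, in the $q_1\to 1$ limit, the rational coefficients $c^\pm_{i+j,j}(\mu)$ and $c^\pm_{j,i+j}(\mu)$ of the GZ formulas, arising from the expansion of $\psi^{(k-1)}_\lab(u/z)\,\psi_{\la^{(k)},i-\beta_k}$ at the $\delta$-function pole $z=q_1^jq_2^kq_3^iu$. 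The diagonal contributions ($i=k$) combine the $(1-q_1)^{-1}$ prefactor with the simple zero of $\psi_{\la^{(k)},i}$ to yield the quadratic numerator of $c^-_{j,j}(\mu)$ in \eqref{GZact00}. These matchings are checked by a careful asymptotic analysis of numerator and denominator factors in \eqref{PSILA}, \eqref{PSIPRIME} and the shell formula, using the cancellations enforced by $q_1q_2q_3=1$.
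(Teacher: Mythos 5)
For part (i) your argument matches the paper's, which establishes this in the paragraph preceding the proposition via the rescaling $\ket{\boldsymbol\la}=(1-q_1)^{p(\boldsymbol\la)}\ket{\boldsymbol\la}^{new}$; no issues there.

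For part (ii), you have the right ingredients (bijection of bases with the hook GZ patterns, matching the lowest weight via the limit of $\psi^\pm(z)$, the constraint $\kappa=n$ forced by $K=q_2^nq_3^n$), but the route you propose for establishing the isomorphism is both heavier than the paper's and incomplete as sketched. The missing structural fact is the irreducibility of $\iota^*_u\bigl(\mathcal{W}_{\theta^{(n)}(\al,c)}\bigr)$ as a $\mathfrak{d}_{q,\kappa,0}$-module. The paper observes this explicitly and uses it to bypass the very obstacle you identify: once the target is known to be irreducible, any lowest weight module with the same lowest weight admits a canonical surjection onto it (through the universal lowest weight module), so ``well-definedness of the intertwiner'' is automatic, and the surjection is an isomorphism because both sides have identical graded dimensions by the common combinatorial basis. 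Your plan instead proposes to verify the matching of limiting matrix elements of $e(z),f(z)$ against the explicit GZ formulas \eqref{GZact1-}--\eqref{GZact00}. That would in principle prove the statement directly (and would make the lowest-weight computation redundant), but it is a substantial computational task that your proposal only gestures at and does not carry out; it is also unnecessary once irreducibility of the pullback is noted. A secondary concern: to assert the limit module is cyclic from $\ket{\omegab}^{new}$ you appeal to ``generically nonzero'' matrix coefficients, but once $q_1\to1$ the genericity parameter is spent; the paper's argument sidesteps this by comparing the submodule generated by the lowest weight vector with the full limit module via the dimension count, without needing cyclicity of the limit a priori.
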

\begin{proof}
Let us show (ii). 
By construction, the limit of 
$\mathcal{N}^{n,n}_{\al,\emptyset,\gamma}(u)$
and
$\iota^*_u\left(\mathcal{W}_{\theta^{(n)}(\al,c)}\right)$
both have bases 
labeled by the same combinatorial set, the GZ-pattern of hook type.  
It is easy to see that
$\iota^*_u\left(\mathcal{W}_{\theta^{(n)}(\al,c)}\right)$
is an irreducible $\mathfrak{d}_{q,\kappa,0}$-module. 
Hence 
it is sufficient to check that the lowest weights are the same.

The limit of $(\psi^+(z)-\psi^-(z))/(1-q_1)$
gives the eigenvalues of $\bar{h}_m$, which in turn 
gives those of $:E_{i,i}:$
via \eqref{lim-h1}, \eqref{lim-h2}
and \eqref{bar1}, \eqref{bar2}.
Denoting by $\theta_i$ the eigenvalues of $E_{i,i}$ we find
\begin{align}
\theta_{i}=
\begin{cases}
d_{1,-i+1}-\sum_{j=1}^\infty(d_{j,j-i+1}-d_{j+1,j-i+1})& 
(i\le 0);\\
\kappa_--\sum_{j=1}^\infty(d_{j+i-1,j}-d_{j+i,j})& (i>0),\\
\end{cases}
\label{theta}
\end{align}
where $d_{i,j}=\max(\ga_i,\al_j)$ and $K=q_1^{\kappa_-}$. 
In the case 
$\kappa_-=-n$, $\al_j=\gamma_j=0$ ($j>n$), f
$\ga_1=\cdots=\ga_n=c$ 
and $\al_1\ge\cdots\ge\al_k
\ge c\ge \al_{k+1}\ge\cdots\ge \al_n$, this reduces to formula
\eqref{GZ-hwt}.
\end{proof}

\section
{Characters}\label{characters}
All $\mathcal E$-modules in this paper are graded by convention $\deg
e_i=-\deg f_i=1$, $\deg \psi^\pm_i=0$. Computation of the
characters of $\mathcal N^{m,n}_{\al,\beta,\gamma}$
is a very interesting and challenging problem. It looks
that in a lot of cases there are many seemingly unrelated highly
non-trivial formulae.  

In this section we compute
the characters of $\mathcal{N}^{n,n}_{\al,\emptyset,\emptyset}(u)$.
Note that $\gamma=\emptyset$.
In this case, by Proposition \ref{Nnn}, our problem is equivalent to computing
the characters of $\gl_\infty$ modules $W_{\theta^{(n)}(\alpha,0)}$
with the degree defined by $\deg E_{ij}=j-i$.
We also present several conjectures at the end.
\subsection{Bosonic construction} 
In this subsection we follow \cite{KR2}.
The main tool is the bosonic construction of $\gl_\infty$ modules.
We omit proofs when they are available in \cite{KR2}.

Let $H$ be the algebra generated by generators $d_i,d_i^*$, $i\in\Z$ with defining relations
\be
[d_i,d_j]=[d_i^*,d_j^*]=0,\qquad [d_i^*,d_j]=\delta_{i,-j}.
\en
Let $U$ be the cyclic representation of the algebra $H$ with the cyclic vector $v$ satisfying
\be
d_{i+1}v=d_{i}^*v=0,
\qquad i\in\Z_{\geq 0}.
\en
The following lemma is clear.

\begin{lem}\label{irred} The module $U$ is an irreducible $H$ module. \qquad $\Box$
\end{lem}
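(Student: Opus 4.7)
The plan is to treat $U$ as a standard Fock-type module for the infinite Heisenberg-like algebra $H$ and use the usual annihilation-operator argument.

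First I would split the generators of $H$ into two sets. Call $d_i$ ($i\ge 1$) and $d_i^*$ ($i\ge 0$) the \emph{annihilation operators}, since these all kill $v$ by assumption. Call $d_i$ ($i\le 0$) and $d_i^*$ ($i\le -1$) the \emph{creation operators}. The key elementary observation is that any two creation operators commute: $[d_i,d_j]=0$ and $[d_i^*,d_j^*]=0$ by definition, while $[d_i^*,d_j]=\delta_{i,-j}$ is zero whenever $i\le -1$ and $j\le 0$, since then $-j\ge 0>i$. Dually, any two annihilation operators commute, and a creation operator commutes with an annihilation operator unless they form a pair $(d_i,d_{-i}^*)$ or $(d_i^*,d_{-i})$ with $i\le -1$ (for the first pair) or $i\ge 0$ (for the second).

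Next I would establish a PBW-type spanning set. Using the commutation relations of $H$ to normal-order any word in the generators (creation on the left, annihilation on the right), and then using that annihilation operators kill $v$, one obtains that $U$ is spanned by the monomials
\begin{equation*}
M_{a,b}\,v \;=\; \Bigl(\prod_{n\ge 0} d_{-n}^{a_n}\Bigr)\Bigl(\prod_{m\ge 1} (d_{-m}^*)^{b_m}\Bigr)v,
\end{equation*}
where $a=(a_n)_{n\ge 0}$ and $b=(b_m)_{m\ge 1}$ are sequences of non-negative integers with finite support. The commutativity of the creation operators makes the products well-defined.

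Now for irreducibility. For any non-zero $w\in U$, expand $w=\sum_{(a,b)} c_{a,b}\,M_{a,b}v$ in this spanning set, and pick an index $(a^\circ,b^\circ)$ with $c_{a^\circ,b^\circ}\ne 0$. Apply the "dual" annihilation monomial
\begin{equation*}
D_{a^\circ,b^\circ}\;=\;\Bigl(\prod_{m\ge 1} d_m^{b^\circ_m}\Bigr)\Bigl(\prod_{n\ge 0}(d_n^*)^{a^\circ_n}\Bigr)
\end{equation*}
to $w$. Using $[d_n^*,d_{-n}]=1$, $[d_m,d_{-m}^*]=1$ and the commutativity among the creation operators (so we can repeatedly contract each $d_n^*$ with $d_{-n}$ and each $d_m$ with $d_{-m}^*$), a straightforward normal-ordering computation shows that
\begin{equation*}
D_{a^\circ,b^\circ}\,M_{a,b}\,v \;=\; \Bigl(\prod_n a^\circ_n!\,\prod_m b^\circ_m!\Bigr)\,\delta_{a,a^\circ}\delta_{b,b^\circ}\,v\;+\;(\text{terms involving creation operators applied to }v),
\end{equation*}
but in fact the complementary terms all vanish because any leftover annihilation operator then acts on $v$. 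Hence $D_{a^\circ,b^\circ}w$ is a non-zero scalar multiple of $v$, so $v\in H\cdot w$. Since $v$ is cyclic, $H\cdot w=U$. This proves both that the $M_{a,b}v$ are linearly independent (take $w=M_{a,b}v$ to see that $D_{a,b}M_{a,b}v\neq 0$) and that $U$ has no proper non-zero submodule.

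The only mildly delicate step is the bookkeeping in the final contraction computation; the commutativity within each of the two sets (creation/creation and annihilation/annihilation) is what makes it factorize cleanly into independent one-mode calculations, so this is really no worse than the irreducibility of a single Weyl-algebra Fock module repeated countably many times.
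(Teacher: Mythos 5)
Your overall strategy (split the generators into creation and annihilation operators, exhibit a PBW-type spanning set, then apply dual annihilation monomials) is the right one; the paper supplies no proof precisely because this is the textbook Fock-module argument, with $H$ decomposing into independent commuting Weyl-algebra modes $\langle d_i^*,d_{-i}\rangle$, $i\in\Z$, and $U$ the restricted tensor product of their Fock modules.

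However, the key contraction step has a genuine gap. You pick an \emph{arbitrary} $(a^\circ,b^\circ)$ with $c_{a^\circ,b^\circ}\neq 0$ and assert that $D_{a^\circ,b^\circ}M_{a,b}v$ vanishes for $(a,b)\neq(a^\circ,b^\circ)$ ``because any leftover annihilation operator then acts on $v$.'' This is false whenever $(a,b)$ dominates $(a^\circ,b^\circ)$ coordinatewise (all $a_n\ge a^\circ_n$, all $b_m\ge b^\circ_m$): then every annihilation operator in $D_{a^\circ,b^\circ}$ is used up in contractions, there is no leftover annihilator, and $D_{a^\circ,b^\circ}M_{a,b}v$ is a nonzero multiple of $M_{a-a^\circ,\,b-b^\circ}v$, not zero. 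Concretely, $d_0^*\bigl(d_0^2\,v\bigr)=2\,d_0v\neq 0$. So for an arbitrary choice of $(a^\circ,b^\circ)$, the vector $D_{a^\circ,b^\circ}w$ need not be a multiple of $v$, and the linear-independence argument at the end suffers from the same defect. The standard repair: among the finitely many indices in the support of $w$, choose $(a^\circ,b^\circ)$ to be \emph{maximal} for the coordinatewise partial order. Then every other $(a,b)$ in the support is strictly smaller in some coordinate, the surplus annihilator in that mode survives normal ordering and kills $v$, so only the diagonal term contributes and $D_{a^\circ,b^\circ}w$ is a nonzero multiple of $v$. (A small side point: with the paper's convention $[d_i^*,d_j]=\delta_{i,-j}$ one has $[d_m,d_{-m}^*]=-1$, not $+1$, so the scalar carries a sign $(-1)^{\sum_m b^\circ_m}$; this does not affect nonvanishing.)
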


Introduce the notation 
\be
:d_id_{-i}^*:=
\begin{cases}
 d_id_{-i}^* & (i\le 0);\\
 d_{-i}^*d_i & (i> 0).\\
\end{cases}
\en

Define an action of $\gl_1=\C\cdot e_{11}$ in $U$ by
\be
e_{11}=\sum_{i\in\Z}:d_id_{-i}^*:.
\en
Define an action of $\gl_\infty$ in $U$ by letting the generator $E_{ij}$ act as
\be
E_{ij}=d_id_{-j}^*.
\en

\begin{prop}\label{sw0}
The actions of $\gl_1$ and $\gl_\infty$ in $U$ commute.
We have the decomposition of $\gl_{\infty}$ modules
\be
U=\oplus_{k\in \Z}  W_{\theta^{(1)}(k,0)}.
\en
Moreover, $W_{\theta^{(1)}(k,0)}=\{v\in U\ |\ e_{11}v=kv\}$.
The module $W_{\theta^{(1)}(k,0)}$ is
the irreducible lowest weight $\gl_\infty$ module with the lowest weight
$\theta^{(1)}(k,0)$ given by \eqref{GZ-hwt} and
with the lowest weight vector 
$d_{0}^kv$ if $k> 0$
and
$(d_{-1}^*)^{-k}v$ if $k\le 0$.
\end{prop}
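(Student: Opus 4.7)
The proof rests on three points: commutativity of the $\gl_1$ and $\gl_\infty$ actions, identification of a lowest weight vector in each $e_{11}$-eigenspace, and a character/dimension argument showing the eigenspaces are irreducible $\gl_\infty$-modules. First I would verify commutativity. A direct calculation using the canonical relations $[d_i^*,d_j]=\delta_{i,-j}$ yields $[e_{11},d_k]=d_k$ and $[e_{11},d_k^*]=-d_k^*$ for every $k\in\Z$; although $e_{11}$ is a formally infinite normal-ordered sum, only the term with $i=k$ contributes, and the prescription $:\!d_id_{-i}^*\!:$ gives the same bracket in both the $i\le 0$ and $i>0$ cases. Consequently $[e_{11},E_{ij}]=[e_{11},d_id_{-j}^*]=d_id_{-j}^*-d_id_{-j}^*=0$.

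Next I would establish the eigenspace decomposition of $U$. One checks $e_{11}v=0$ by noting that for $i\le 0$ the term $d_id_{-i}^*v$ vanishes because $d_{-i}^*v=0$, while for $i>0$ the term $d_{-i}^*d_iv$ vanishes because $d_iv=0$. Applying the $H$-monomials generating $U$ (by Lemma \ref{irred}) produces $e_{11}$-eigenvectors whose eigenvalues sweep out all of $\Z$. Setting $U_k=\ker(e_{11}-k)$, commutativity gives $U=\bigoplus_{k\in\Z}U_k$ with each $U_k$ a $\gl_\infty$-submodule.

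The third step is to exhibit a lowest weight vector of $\gl_\infty$ in each $U_k$. For $k>0$, I would verify $E_{ij}(d_0^kv)=d_id_{-j}^*d_0^kv=0$ for all $i>j$ by case analysis. If $j<0$, then $d_{-j}^*$ commutes past $d_0$ and is annihilated by $v$ (since $-j\ge 1$). If $j=0$, then $d_0^*d_0^kv=kd_0^{k-1}v$, and $d_i$ with $i\ge 1$ commutes past $d_0^{k-1}$ to annihilate $v$. If $j>0$, then $d_{-j}^*$ commutes past $d_0^k$, and acting with $d_i$ one uses $[d_i,d_{-j}^*]=-\delta_{i,j}=0$ for $i>j$ to reduce to $d_iv=0$. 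The case $k\le 0$ is symmetric, using $(d_{-1}^*)^{-k}v$. A direct computation of the eigenvalues of $E_{ii}$ on the lowest weight vector then yields the weights matching \eqref{GZ-hwt} with $n=1$, $\al=(k)$, $c=0$ (suitably extended for $k\le 0$).

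The main obstacle is showing that $U_k$ coincides with the cyclic $\gl_\infty$-submodule generated by the lowest weight vector, so that $U_k\cong W_{\theta^{(1)}(k,0)}$. I would introduce the principal grading $\deg d_i=\deg d_{-i+1}^*=1-i$, for which $U_k$ carries a well-defined graded character equal to a standard bosonic partition function depending only on $k$, and compare it with the known character of the irreducible lowest weight $\gl_\infty$-module with lowest weight $\theta^{(1)}(k,0)$. These characters agree by the Schur--Weyl--Howe duality of \cite{KR2}; together with the surjection from the irreducible module onto the cyclic submodule, this forces isomorphism and completes the proof.
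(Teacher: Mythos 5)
The paper omits a proof of Proposition~\ref{sw0} entirely, deferring to \cite{KR2} with the blanket remark ``We omit proofs when they are available in [KR2]''; so there is no internal argument to compare against, and your task was effectively to reconstruct the [KR2] argument. Your verifications of the structural claims are correct: the computation $[e_{11},d_k]=d_k$, $[e_{11},d_k^*]=-d_k^*$ does give $[e_{11},E_{ij}]=0$; the eigenvalue $e_{11}v=0$ and the resulting direct sum of eigenspaces are right; the case analysis showing $E_{ij}(d_0^kv)=0$ for $i>j$ (and its mirror for $(d_{-1}^*)^{-k}v$) is correct, and the $E_{ii}$-eigenvalues reproduce \eqref{GZ-hwt} with $n=1$. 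Two remarks on the final step. First, the surjection runs the other way from what you wrote: the cyclic submodule $M_k$ generated by the lowest weight vector surjects onto the irreducible $W_{\theta^{(1)}(k,0)}$ (not vice versa), and the character inequality $\mathrm{ch}\,W_{\theta^{(1)}(k,0)}\le\mathrm{ch}\,M_k\le\mathrm{ch}\,U_k$ is what you want to pinch to equality. Second, invoking the Schur--Weyl--Howe duality of \cite{KR2} to supply the target character is circular, since this proposition is precisely the $n=1$ instance of that duality; a self-contained route is to read off $\mathrm{ch}\,W_{\theta^{(1)}(k,0)}$ from the hook-shaped GZ-pattern basis of Proposition~\ref{GLMODULE} (this is exactly what the paper does two displays later, in the Corollary identifying the character with $\chi_k$), and to match it against the visible monomial basis of $U_k$. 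With those two corrections the argument closes up and is, in substance, the same as the one the paper is pointing to in \cite{KR2}.
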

Let $H_n=H^{\otimes n}$. We denote the generators of $H_n$ by
$d_i^{(k)},d_i^{(k)*}$, $i\in\Z,k\in\{1,\dots,n\}$.
We have
\be
[d_i^{(k)},d_j^{(l)}]=[d_i^{(k)*},d_j^{(l)*}]=0,\quad [d_i^{(k)},d_j^{(l)}]=\delta_{i,-j}\delta_{k,l}.
\en
Then $U_n=U^{\otimes n}$ is naturally an $H_n$-module. By Lemma \ref{irred}, $U_n$ is an irreducible
$H_n$-module. We set $v_n=v^{\otimes n}$.

Define an action of $\gl_n$ in $U_n$ by letting the matrix units $e_{kl}$ act as
\be
e_{kl}=\sum_{i\in\Z}:d_i^{(k)}d_{-i}^{(l)*}:, \qquad k,l=1,\dots,n.
\en
Define an action of $\gl_\infty$ in $U_n$ by letting the matrix units $E_{ij}$ act as
\be
E_{ij}=\sum_{k=1}^nd_i^{(k)}d_{-j}^{(k)*}, \qquad i,j\in\Z.
\en

Proposition \ref{sw} below generalizes Proposition \ref{sw0} to the case
where $\gl_1$ is replaced with $\gl_n$. It was proved in \cite{KR2}, see also \cite{W}.
It is a $\gl_\infty$ version of the Schur-Weyl-Howe duality. The latter states
\begin{prop}\label{DUALITY}
Let $N$ be an integer such that $N\geq n$.
In the above setting consider the subspace
\begin{align*}
U_{n,N}=\C[d^{(k)}_i;1\leq k\leq n,-N+1\leq i\leq0]v_n\subset U_n.
\end{align*}
We have mutually commutative actions of $\gl_n$ and $\gl_N\simeq\gl_{-N+1,0}$ on $U_{n,N}$,
and with respect to these actions, we have the decomposition
\begin{align*}
U_{n,N}=\oplus_\al L_\al\otimes \tilde L_\al,
\end{align*}
where $\alpha=(\alpha_1,\alpha_2,\dots,\alpha_n,0,0,\ldots)$, $\alpha_i\in\Z$ such that
$\al_1\geq\al_2\geq\ldots\geq\al_n\geq0$, and $L_\al$ $($resp., $\tilde L_\al)$
is the irreducible $\gl_n$ $($resp., $\gl_N)$ module with the highest weight $(\al_1,\ldots,\al_n)$
$($resp., the lowest weight $(0,\ldots,0,\al_n,\ldots,\al_1))$.
The component $L_\al\otimes \tilde L_\al$ is generated by the cyclic vector
\begin{align*}
v^{(n,N)}_\alpha=\prod_{i=1}^{n-1}(D_i)^{\alpha_i-\alpha_{i+1}}(D_n)^{\alpha_n},
\end{align*}
where $D_i$ are given by
\be
D_i=\det(d_{-j+1}^{(l)})_{j,l=1,\dots,i}.
\en
\end{prop}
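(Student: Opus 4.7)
The plan is to reduce the statement to the classical $(\gl_n,\gl_N)$-Howe duality on an $n\times N$ polynomial matrix.

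First, I would identify $U_{n,N}$ with the polynomial algebra $\C[d^{(k)}_i : 1\le k\le n,\ -N+1\le i\le 0]$ via $P\mapsto P v_n$. This is a vector space isomorphism: the generators $d^{(l)}_j$ mutually commute and each $d^{(l)*}_{-i}$ with $-i\ge 0$ annihilates $v_n$, so a PBW argument in $H_n$ shows these monomials form a basis of $U_{n,N}$. Commuting $d^{(l)*}_{-i}$ past the $d$'s using $[d^{(l)*}_{-i},d^{(m)}_j]=\delta_{i,j}\delta_{l,m}$ then shows that $d^{(l)*}_{-i}$ acts on $U_{n,N}$ as $\partial/\partial d^{(l)}_i$ when $-N+1\le i\le 0$, and as zero otherwise.

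Next, I would compute the restrictions of both actions to $U_{n,N}$. In $e_{kl}=\sum_{i\in\Z}{:d^{(k)}_i d^{(l)*}_{-i}:}$, terms with $i>0$ vanish because after normal ordering $d^{(k)}_i$ stands on the right and annihilates $v_n$, while terms with $i<-N+1$ vanish because the corresponding derivative kills all polynomials in the retained variables. What remains is
\begin{align*}
e_{kl}\bigr|_{U_{n,N}}=\sum_{i=-N+1}^{0} d^{(k)}_i\,\frac{\partial}{\partial d^{(l)}_i},\qquad
E_{ij}\bigr|_{U_{n,N}}=\sum_{k=1}^{n} d^{(k)}_i\,\frac{\partial}{\partial d^{(k)}_j}\quad(-N+1\le i,j\le 0).
\end{align*}
These are exactly the commuting actions of the $(GL_n,GL_N)$-dual pair on the symmetric algebra on an $n\times N$ matrix of variables; commutativity is manifest since the two families of differential operators act on disjoint index pairs. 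The decomposition
\begin{align*}
U_{n,N}\simeq\bigoplus_{\alpha:\ \ell(\alpha)\le n} L_\alpha\otimes\tilde L_\alpha
\end{align*}
then follows from the classical Howe duality (equivalently, Weyl's reciprocity) for this dual pair, valid since $N\ge n$, with $L_\alpha$ the $\gl_n$-irreducible of highest weight $\alpha$ and $\tilde L_\alpha$ the $\gl_N$-irreducible of lowest weight $(0,\dots,0,\alpha_n,\dots,\alpha_1)$.

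Finally, I would verify that $v^{(n,N)}_\alpha$ is a joint highest--lowest weight vector for $L_\alpha\otimes\tilde L_\alpha$. The key lemma is that each $D_i$ is annihilated both by the raising operators $e_{kl}$ ($k<l$) of $\gl_n$ and by the lowering operators $E_{j+1,j}$ ($-N+1\le j\le -1$) of $\gl_N$: applying $\sum_{a} d^{(k)}_a\,\partial/\partial d^{(l)}_a$ (respectively $\sum_{m} d^{(m)}_{j+1}\,\partial/\partial d^{(m)}_j$) to $\det(d^{(l)}_{-j+1})_{j,l=1}^{i}$ replaces, via the Leibniz rule, column $l$ by column $k$ (respectively row $-j+1$ by row $-j$), producing a determinant with two equal columns or rows and hence zero. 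A direct count gives $D_i$ the $\gl_n$-weight $(1^i,0^{n-i})$ and the $\gl_N$-weight $(0^{N-i},1^i)$, whence $v^{(n,N)}_\alpha$ has $\gl_n$-weight $\alpha$ and $\gl_N$-lowest weight $(0,\dots,0,\alpha_n,\dots,\alpha_1)$, matching the component $L_\alpha\otimes\tilde L_\alpha$ uniquely by multiplicity-freeness of the Howe decomposition. The genuine content is the Howe duality itself, which is classical; the only work specific to the present setup is the bookkeeping in the first two paragraphs together with the determinant identity above, and I do not foresee a real obstacle.
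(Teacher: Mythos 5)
Your proof is correct. Note that the paper does not actually supply a proof of this proposition: it explicitly states that proofs available in \cite{KR2} are omitted, and presents this as the classical $(\gl_n,\gl_N)$ Schur--Weyl--Howe duality underlying the $\gl_\infty$ version cited from \cite{KR2} and \cite{W}. Your argument is the canonical one: identify $U_{n,N}$ with the symmetric algebra on an $n\times N$ matrix via $P\mapsto Pv_n$; observe that after normal ordering the restrictions of $e_{kl}$ and $E_{ij}$ to $U_{n,N}$ become the standard polynomial differential operators of the $(\gl_n,\gl_N)$ dual pair (the terms with $i>0$ annihilate and those with $i\le -N$ act by derivatives in absent variables); invoke classical Howe reciprocity for $N\ge n$; and finally verify that each $D_i$ is a joint $\gl_n$-highest and $\gl_N$-lowest weight vector by the repeated row/column argument, so $v^{(n,N)}_\alpha$ has the required weights and, by multiplicity-freeness, generates $L_\alpha\otimes\tilde L_\alpha$. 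All details --- the commutator $[d^{(l)*}_{-i},d^{(m)}_j]=\delta_{i,j}\delta_{l,m}$, the weight computations for $D_i$, the Leibniz-rule vanishing --- check out, so the proposal is a complete and correct substitute for the omitted classical proof.
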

Similarly we define
\be
D_i^*=\det(d_{-j}^{(n+1-l)*})_{j,l=1,\dots,i}.
\en

Now we give the duality statement for $\gl_n$ and $\gl_\infty$.
\begin{prop}\label{sw} 
We have the decomposition 
\be
U_n=\oplus_\al
\Bigl(L_\alpha\otimes W_{\theta^{(n)}(\alpha,0)}\Bigr),
\en 
where $\alpha=(\alpha_1,\alpha_2,\dots,\alpha_n)$
$(\alpha_1\geq\alpha_2\geq\dots\geq\alpha_n,\alpha_i\in\Z)$, and
$L_\alpha$ is the irreducible $\gl_n$ module with highest weight
$\alpha$
and $W_{\theta^{(n)}(\alpha,0)}$ is the irreducible lowest weight $\gl_\infty$ module
with lowest weight $\theta^{(n)}(\alpha,0)$ given by \eqref{GZ-hwt}.
Moreover, $L_\alpha\otimes W_{\theta^{(n)}(\alpha,0)}$ is generated by
\begin{align}\label{sing vect2}
v_\alpha=\prod_{i=1}^{k(\al)-1}
(D_i)^{\alpha_i-\alpha_{i+1}} 
(D_{k(\al)})^{\alpha_{k(\al)}} 
\times 
\prod_{i=1}^{s(\al)-1}(D_i^*)^{\alpha_{n-i}-\alpha_{n-i+1}}
(D_{s(\al)}^*)^{-\alpha_{n-s(\al)+1}}\ v_n,
\end{align}
where $k(\al)$, $s(\al)$ are the numbers of positive and negative parts of $\alpha$ respectively.
\end{prop}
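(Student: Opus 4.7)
The proposition is the $\gl_n$-analogue of Proposition \ref{sw0} and the infinite-rank version of Proposition \ref{DUALITY}, and I would prove it in four steps: commutativity of the two actions, identification of $v_\al$ as a joint singular vector, identification of the $\gl_\infty$-isotypic component, and completeness of the sum. For commutativity, $[e_{kl}, E_{ij}]$ reduces via the Heisenberg relations $[d_i^{(k)}, d_j^{(l)*}] = \delta_{i,-j}\delta_{k,l}$ to a pair of sums that cancel by the symmetry of the normal ordering; this is a short direct computation.

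The core of the proof is the second step. The determinants $D_i$ use only the first $i$ tensor factors and $D_j^*$ only the last $j$ factors, so since $k(\al) + s(\al) \le n$ the two products in \eqref{sing vect2} involve disjoint tensor factors and therefore commute. Each $D_i$ is a $\gl_n$-highest-weight vector of weight $(1,\ldots,1,0,\ldots,0)$ with $i$ ones, and each $D_j^*$ is a highest-weight vector of weight $(0,\ldots,0,-1,\ldots,-1)$ with $j$ trailing $-1$'s, by the antisymmetry of the determinants; thus $v_\al$ is $\gl_n$-highest-weight of weight $\al$. The vacuum $v_n$ satisfies $E_{ij} v_n = 0$ for $i > j$: either $d_{-j}^{(k)*} v_n = 0$ when $j \le 0$, or after commuting one has $d_i^{(k)} v_n = 0$ when $j > 0$ (so $i \ge 2$). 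Finally each $D_i$ and $D_j^*$ commutes with $E_{ij}$ for $i > j$ modulo operators annihilating $v_n$, so $v_\al$ remains $\gl_\infty$-lowest-weight; evaluating $E_{ii} v_\al$ then yields the weight $\theta^{(n)}(\al,0)$ of \eqref{GZ-hwt}.

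For the third step, Proposition \ref{GLMODULE} with $c = 0$ exhibits the irreducible $\gl_\infty$-module of lowest weight $\theta^{(n)}(\al,0)$ as cyclic on its lowest-weight vector; by the previous step the $\gl_\infty$-orbit of $v_\al$ is a quotient of this module, hence equal to $W_{\theta^{(n)}(\al,0)}$, and analogously $U(\gl_n) v_\al = L_\al$. Commutativity of the two actions then gives $L_\al \otimes W_{\theta^{(n)}(\al,0)} \subset U_n$. For the fourth step, I would compare joint $\gl_n \times \gl_\infty$-characters graded by the Cartan of $\gl_n$ and by the principal degree: each joint graded piece is finite-dimensional, the character identity on finite truncations $U_{n,N}$ is Proposition \ref{DUALITY}, and stability as $N \to \infty$ yields the decomposition on the whole of $U_n$.

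The principal obstacle is the $\gl_\infty$-lowest-weight verification for $v_\al$ in the mixed case where both $D_i$ and $D_j^*$ appear. The purely positive case ($s(\al) = 0$) reduces directly to Proposition \ref{DUALITY} by taking $N \to \infty$; the $D_j^*$-factors, however, ``dig below'' the vacuum in $\gl_\infty$-degree, and handling them cleanly will require the disjoint-support observation above together with the $d \leftrightarrow d^*$, $\al \leftrightarrow -\al$ symmetry which interchanges the positive and negative parts of the construction.
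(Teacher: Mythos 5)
The paper itself offers no proof of this proposition; it refers to Kac--Radul and Wang. Your plan (commutativity, identification of $v_\al$ as a joint singular vector, identification of the isotypic components, and completeness) is structurally the right one, and steps 1 and 3 check out. You yourself flag that the $\gl_\infty$-lowest-weight verification for $v_\al$ in the mixed case $k(\al), s(\al) > 0$ is the delicate point, and you sketch a plausible fix via the disjoint-support observation and a $d \leftrightarrow d^*$ symmetry; note that the naive swap $d_i \leftrightarrow d_i^*$ does not preserve the vacuum (because $d_0 v \ne 0$ while $d_0^* v = 0$), so the symmetry you invoke has to be the index-shifted one $d_a \mapsto d_{a-1}^*$, $d_a^* \mapsto -d_{a+1}$, which does fix $v$ and realizes $\sigma(E_{i,j}) = -E_{1-j,1-i}$ at the Heisenberg level. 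This can be made to work, but it is genuinely the content that needs to be supplied, and the way you have stated it ("handling them cleanly will require...") leaves the step incomplete.

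The more serious, unacknowledged gap is in step 4. Proposition \ref{DUALITY}'s truncations are
$U_{n,N} = \C[d_i^{(k)}; 1 \le k \le n, -N+1 \le i \le 0]\,v_n$,
which contain \emph{no} $d^*$-creation operators. Consequently
$\bigcup_{N} U_{n,N} = \C[d_i^{(k)}; i \le 0]\,v_n$
is a proper subspace of $U_n$ (it is the polynomial Fock space in the $d$'s alone), and passing to the limit $N \to \infty$ of the $\gl_n \times \gl_N$-decompositions of $U_{n,N}$ can at best describe this subspace. In particular every summand $L_\al \otimes W_{\theta^{(n)}(\al,0)}$ with $s(\al) > 0$, i.e.\ with $\al$ having negative parts, lies entirely outside $\bigcup_N U_{n,N}$, so "stability as $N \to \infty$" cannot yield the decomposition of the whole of $U_n$. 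To close the completeness argument you need either a truncation that also bounds the $d^*$-indices (say $\C[d_i^{(k)}, d_{-j}^{(k)*}; 0 \ge i \ge -N+1, 1 \le j \le N]\,v_n$, with a $\gl_n \times \gl_{2N}$ duality), or a direct classification showing that every joint $(\gl_n$-highest, $\gl_\infty$-lowest$)$ weight vector in $U_n$ is proportional to some $v_\al$, from which the decomposition follows since each joint bigraded piece is finite-dimensional.
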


\subsection{Characters of $W_{\theta^{(1)}(k,0)}$}
Consider the set
\be
C_a=\{(\la,\mu)\ |\ \la,\mu - \text{partitions},\ \mu_1+a\geq \la_1\}.
\en
Let 
\be
\bar \chi_a=\sum_{(\la,\mu)\in C_a}q^{|\la|+|\mu|}
\en
be the corresponding formal character. 

Set 
\be
(q)_\infty=\prod_{i=1}^\infty(1-q^i).
\en

\begin{lem}\label{rec lem} For $k\in\Z_{\geq 0}$ we have the recursive relation
\be
\bar\chi_k(q)+q^{k+1}\bar\chi_{k+1}(q)=\frac{1}{(q)_\infty^2}.
\en
\end{lem}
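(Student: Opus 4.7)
The plan is to interpret both sides of the identity as generating functions counting pairs of partitions. The right hand side $1/(q)_\infty^2$ enumerates all pairs $(\lambda,\mu)$ of partitions weighted by $q^{|\lambda|+|\mu|}$. Since the set $C_k$ and its complement $D_k := \{(\lambda,\mu) : \lambda_1 \geq \mu_1 + k + 1\}$ partition the set of all pairs of partitions, one has
\begin{align*}
\bar\chi_k(q) + \sum_{(\lambda,\mu)\in D_k} q^{|\lambda|+|\mu|} = \frac{1}{(q)_\infty^2}.
\end{align*}
It therefore suffices to exhibit a bijection $\Phi : D_k \to C_{k+1}$ that shifts the total size by exactly $k+1$.

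The natural choice of $\Phi$ is to strip the first part off of $\lambda$ (which is ``too large'') and transplant it, reduced by $k+1$, onto the front of $\mu$. Concretely, given $(\lambda,\mu) \in D_k$, I would define
\begin{align*}
\Phi(\lambda,\mu) = (\tilde\lambda,\tilde\mu), \qquad
\tilde\lambda := (\lambda_2,\lambda_3,\ldots), \qquad
\tilde\mu := (\lambda_1 - (k+1),\, \mu_1, \mu_2, \ldots).
\end{align*}
The inequality $\lambda_1 - (k+1) \geq \mu_1$ built into $D_k$ makes $\tilde\mu$ a partition, and $\tilde\lambda_1 = \lambda_2 \leq \lambda_1 = \tilde\mu_1 + (k+1)$ places $(\tilde\lambda,\tilde\mu)$ in $C_{k+1}$. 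The inverse $\Phi^{-1}$ takes $(\tilde\lambda,\tilde\mu) \in C_{k+1}$ to the pair with $\lambda = (\tilde\mu_1 + (k+1),\,\tilde\lambda_1, \tilde\lambda_2, \ldots)$ and $\mu = (\tilde\mu_2, \tilde\mu_3, \ldots)$; it is well-defined by the same inequalities read in reverse.

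The size accounting is immediate: $|\tilde\lambda| = |\lambda| - \lambda_1$ and $|\tilde\mu| = |\mu| + \lambda_1 - (k+1)$, so $|\tilde\lambda| + |\tilde\mu| = |\lambda| + |\mu| - (k+1)$. This yields
\begin{align*}
\sum_{(\lambda,\mu)\in D_k} q^{|\lambda|+|\mu|} = q^{k+1} \bar\chi_{k+1}(q),
\end{align*}
which combined with the partition of all pairs gives the claimed recursion.

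There is no real obstacle in this argument; the only thing requiring care is checking that $\Phi$ and $\Phi^{-1}$ preserve both the partition property and membership in the correct set, but this is driven entirely by the single inequality $\lambda_1 \geq \mu_1 + k+1$ versus $\tilde\lambda_1 \leq \tilde\mu_1 + k+1$. One minor edge case to confirm is that $\mu = \emptyset$ (respectively $\tilde\mu = \emptyset$) causes no issue, since then $\mu_1 = 0$ and the formulas still make sense.
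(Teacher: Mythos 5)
Your proof is correct and is essentially the paper's own argument: both rest on the same bijection that strips the first part of $\lambda$ and splices it (shifted by $k+1$) onto $\mu$, thereby matching the complement of $C_k$ inside all pairs of partitions with $C_{k+1}$. The only cosmetic difference is that the paper packages this as a single map $\iota_k : C_k \sqcup C_{k+1} \to C_\infty$ (swapping $(\lambda,\mu)\mapsto(\mu,\lambda)$ on the $C_k$ piece so the two images are complementary), while you partition $C_\infty = C_k \sqcup D_k$ directly and biject $D_k \to C_{k+1}$; the underlying combinatorics is identical.
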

\begin{proof}
We construct a map 
\be
\iota_k:\ C_k\sqcup C_{k+1}\to C_{\infty}:=\{(\la,\mu)\ |\ \la,\mu - \text{partitions}\}
\en
as follows. For $(\la,\mu)\in C_k$ we set $\iota_k(\la,\mu)=(\mu,\la)$.
 For $(\la,\mu)\in C_{k+1}$ we set 
$\iota_k(\la,\mu)=(\tilde{\mu},\tilde{\la})$,
where
\be
\tilde{\la} 
=(\mu_1+k+1,\la_1,\la_2,\la_3\dots),\qquad 
\tilde{\mu} 
=(\mu_2,\mu_3,\mu_4,\dots).
\en
Clearly, $\iota_k$ is a bijection. The lemma follows.
\end{proof}
\begin{cor} For $k\in\Z_{\geq 0}$ we have
\be
\bar\chi_k=\frac{1}{(q)_\infty^2}\sum_{j=0}^\infty (-1)^jq^{j(j+1)/2+jk}.
\en
\end{cor}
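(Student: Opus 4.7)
The plan is to iterate the recursion from Lemma~\ref{rec lem} and show that the remainder term vanishes in the formal power series sense.

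First I would rewrite the recursion as
\begin{equation*}
\bar\chi_k = \frac{1}{(q)_\infty^2} - q^{k+1}\bar\chi_{k+1},
\end{equation*}
and then substitute it into itself repeatedly. After $N$ iterations one obtains
\begin{equation*}
\bar\chi_k = \frac{1}{(q)_\infty^2}\sum_{j=0}^{N-1}(-1)^j q^{jk + j(j+1)/2} + (-1)^N q^{Nk + N(N+1)/2}\,\bar\chi_{k+N},
\end{equation*}
using that $\sum_{i=1}^{j}(k+i) = jk + j(j+1)/2$. This is a straightforward induction on $N$ and is the only computation I would carry out explicitly.

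The remaining task is to take $N\to\infty$. Note that $\bar\chi_{k+N}(q) \in \Z[[q]]$ with $\bar\chi_{k+N}(0)=1$, so it is a well-defined formal power series with bounded (in fact, polynomial) coefficient growth in each fixed degree. Since the prefactor $q^{Nk + N(N+1)/2}$ has valuation growing quadratically in $N$, the remainder term has valuation tending to infinity, hence converges to $0$ in the $q$-adic topology on $\Z[[q]]$. Simultaneously, the finite sum stabilizes term-by-term to the infinite sum $\sum_{j\ge 0}(-1)^j q^{jk+j(j+1)/2}$, because for each fixed $j$ the summand is independent of $N$. Combining the two observations gives the claimed identity.

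The main (and only) nontrivial point will be the justification that the remainder vanishes; but since everything lives in $\Z[[q]]$ and the exponent $Nk+N(N+1)/2$ is unbounded, this is immediate from the $q$-adic convergence. No combinatorial argument beyond Lemma~\ref{rec lem} is required.
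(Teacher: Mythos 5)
Your proof is correct and follows the same route as the paper: iterate the recursion from Lemma~\ref{rec lem} and let the remainder vanish. The paper simply writes out the first two iterations and says ``Continuing, we obtain the corollary,'' whereas you make explicit the induction giving the general $N$-step formula and the $q$-adic argument that the remainder $(-1)^N q^{Nk+N(N+1)/2}\bar\chi_{k+N}$ tends to zero; this is a useful clarification but not a different method.
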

\begin{proof}
Repeating using Lemma \ref{rec lem}, we compute
\be
\bar\chi_k(q)=\frac{1}{(q)_\infty^2}-q^{k+1}\bar\chi_{k+1}(q)
=\frac{1}{(q)_\infty^2}-q^{k+1}\Bigl(\frac{1}{(q)_\infty^2}-q^{k+2}\bar\chi_{k+2}(q)\Bigr)=...\ .
\en
Continuing, we obtain the corollary.
\end{proof}

For  $k\in\Z_{\geq 0}$, set

\be
\chi_k=\bar\chi_k,\qquad \chi_{-k}=q^k\bar\chi_k.
\en

We set $\deg d_{i}=\deg d_{i}^*=-i$, $\deg v=0$.
\begin{cor}
For $k\in\Z$, we have
\be
\chi(W_{\theta^{(1)}(k,0)})=\chi_k.
\en
\end{cor}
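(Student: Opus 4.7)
The plan is to use Proposition~\ref{sw0}, which identifies $W_{\theta^{(1)}(k,0)}$ with the $e_{11}$-eigenspace of eigenvalue $k$ inside $U$, and then to compute that eigenspace's character directly from a monomial basis. Since $v$ is annihilated by $d_j$ for $j\ge 1$ and by $d_j^*$ for $j\ge 0$, and the remaining $d_j$ ($j\le 0$) and $d_j^*$ ($j\le -1$) all commute, $U$ has a polynomial basis consisting of the monomials
\[
d_0^{m_0}\prod_{i\ge 1}(d_{-i})^{r_i}\prod_{i\ge 1}(d_{-i}^*)^{s_i}\,v,
\qquad m_0,r_i,s_i\in\Z_{\ge0},
\]
with almost all $r_i,s_i$ zero. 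Encoding $(r_i)$ and $(s_i)$ as the multiplicity sequences of partitions $\lambda,\mu$ with positive parts, such a monomial has degree $|\lambda|+|\mu|$; the Heisenberg relations give $[e_{11},d_j]=d_j$, $[e_{11},d_j^*]=-d_j^*$ and $e_{11}v=0$, so its $e_{11}$-weight is $m_0+\ell(\lambda)-\ell(\mu)$.

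The weight condition then forces $m_0=k+\ell(\mu)-\ell(\lambda)$, and $m_0\ge 0$ amounts to $\ell(\lambda)-\ell(\mu)\le k$, so
\[
\chi(W_{\theta^{(1)}(k,0)})=\sum_{\ell(\lambda)-\ell(\mu)\le k}q^{|\lambda|+|\mu|}.
\]
Young diagram transposition $(\lambda,\mu)\mapsto(\lambda^T,\mu^T)$ preserves total size and exchanges the number of parts with the first part, turning the condition into $\lambda_1-\mu_1\le k$. For $k\ge 0$ this is exactly the defining inequality of $C_k$, so $\chi(W_{\theta^{(1)}(k,0)})=\bar\chi_k=\chi_k$ at once.

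For $k=-\ell$ with $\ell>0$ the condition instead reads $\mu_1\ge\lambda_1+\ell$, and I would match this with $q^\ell\bar\chi_\ell$ by invoking the bijection $\iota_{\ell-1}$ from the proof of Lemma~\ref{rec lem}, restricted to $C_\ell\subset C_{\ell-1}\sqcup C_\ell$. It sends $(\lambda,\mu)\in C_\ell$ to $(\tilde\mu,\tilde\lambda)$ with $\tilde\lambda=(\mu_1+\ell,\lambda_1,\lambda_2,\ldots)$ and $\tilde\mu=(\mu_2,\mu_3,\ldots)$, whose first parts satisfy $(\tilde\lambda)_1-(\tilde\mu)_1=\mu_1-\mu_2+\ell\ge\ell$. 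The explicit inverse $\mu_1=B_1-\ell$, $\mu_i=A_{i-1}$ ($i\ge 2$), $\lambda_i=B_{i+1}$ shows that every pair $(A,B)$ with $B_1\ge A_1+\ell$ arises uniquely in this way, and the condition $\mu_1+\ell=B_1\ge B_2=\lambda_1$ places $(\lambda,\mu)$ in $C_\ell$. The bijection increases the total size by $\ell$, so
\[
\chi(W_{\theta^{(1)}(-\ell,0)})=\sum_{B_1\ge A_1+\ell}q^{|A|+|B|}=q^\ell\bar\chi_\ell=\chi_{-\ell}.
\]

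The only step that is not pure bookkeeping is recognizing that the ``second branch'' of $\iota_{\ell-1}$ parameterizes precisely the set $\{\mu_1\ge\lambda_1+\ell\}$ that arises from the monomial basis after transposition; this is the one point I would write out with some care, although it is immediate from the explicit inverse just given.
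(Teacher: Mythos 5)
Your proof is correct, but it takes a genuinely different route from the paper. The paper derives the corollary directly from Proposition~\ref{GLMODULE}: with $n=1$ the Gelfand--Zetlin patterns of hook shape are an infinite decreasing row stabilizing at $k$ together with an infinite decreasing column stabilizing at $0$, glued at the corner, and this data is easily identified with a pair $(\lambda,\mu)$ subject to the single interlacing inequality at the corner, i.e.\ with the set $C_k$; the authors say only ``therefore the corollary follows,'' leaving the degree bookkeeping (in particular the shift $q^{k}$ that produces $\chi_{-k}=q^{k}\bar\chi_{k}$) implicit. You instead invoke the other parameterization available in the paper, Proposition~\ref{sw0}: you take the monomial basis of the Fock-type module $U$, read off the $e_{11}$-weight $m_0+\ell(\lambda)-\ell(\mu)$ and the degree $|\lambda|+|\mu|$, and after transposing both partitions you land on the inequality $\lambda_1-\mu_1\le k$. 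For $k\ge0$ this is literally $C_k$, and for $k=-\ell<0$ you reuse the bijection $\iota_{\ell-1}$ from the proof of Lemma~\ref{rec lem} to show that the resulting set $\{\mu_1\ge\lambda_1+\ell\}$ has generating function $q^{\ell}\bar\chi_{\ell}$. The two approaches are of comparable length, but yours has the advantage of making the degree shift for negative $k$ completely explicit rather than absorbing it silently into the definition of $\chi_{-k}$; the paper's GZ route is more uniform with the general-$n$ story in Section~4. One small detail worth adding in your write-up: the identification $\deg d_i=\deg d_i^*=-i$ is chosen so that $\deg(d_id_{-j}^*)=j-i=\deg E_{ij}$, which is what makes the two gradings match under Proposition~\ref{sw0}.
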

\begin{proof}
By Proposition \ref{GLMODULE}, for $k\in\Z_{\geq0}$
the modules $W_{\theta^{(1)}(k,0)}$ and
$W_{\theta^{(1)}(-k,0)}$ have bases parameterized by the set $C_k$, therefore
the corollary follows.
\end{proof}
\subsection{Characters of $W_{\theta^{(n)}(\alpha,0)}$}
Set $\deg d_{i}^{(j)}=\deg d_{i}^{(j)*}=-i$, $\deg v_n=0$. 
Proposition \ref{sw} allows us to compute the character of $W_{\theta(\alpha,0)}$ in terms of $\chi_k$.

For $\alpha=(\al_1,\ldots,\al_n)$ $(\al_1\geq\ldots\geq\al_n,\al_i\in\Z)$,
let $k(\alpha)$ be the number of positive parts of $\alpha$. We set
\be
p(\alpha)=\sum_{i=1}^{k(\alpha)}(i-1)\alpha_i+\sum_{i=1}^{n-k(\alpha)}i\alpha_{n-i+1}.
\en

Then $p(\alpha)$ is the degree of the singular vector $v_\alpha$ given by \Ref{sing vect2}.

Recall that the $\gl_n$ weight $\rho$ is given by 
\be
\rho=\biggl(\frac{n-1}{2},\frac{n-3}{2},\dots,\frac{1-n}{2}\biggr),
\en
and that the symmetric group $S_n$ acts on the 
$\gl_n$ weights by simply permuting the indexes.
\begin{thm} We have
\be
q^{p(\alpha)}\chi(W_{\theta(\alpha,0)})=
\sum_{\sigma\in S_n} (-1)^\sigma \prod_{i=1}^n \chi_{(\sigma(\al+\rho)-\rho)_i}.
\en
\end{thm}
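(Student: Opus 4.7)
The plan is to compute the $\gl_n \times (\text{degree})$ bigraded character of $U_n$ in two different ways---via Proposition \ref{sw} and via the tensor product structure $U_n = U^{\otimes n}$---and then extract $\chi(W_{\theta^{(n)}(\alpha,0)})$ by inverting the Weyl character formula for $\gl_n$.

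First, by Proposition \ref{sw}, the decomposition $U_n = \bigoplus_\alpha L_\alpha \otimes W_{\theta^{(n)}(\alpha,0)}$ is compatible with both gradings: the $\gl_n$-action preserves the degree, while the $\gl_\infty$-action preserves the $\gl_n$-weight. The cyclic vector $v_\alpha$ of \eqref{sing vect2} is at once a $\gl_n$-highest weight vector and a $\gl_\infty$-lowest weight vector, and its degree in $U_n$ equals $p(\alpha)$. Therefore
\begin{align*}
\mathrm{Ch}(U_n;z,q) = \sum_\alpha q^{p(\alpha)}\,\mathrm{ch}(L_\alpha)(z)\, \chi(W_{\theta^{(n)}(\alpha,0)})(q).
\end{align*}
On the other hand, writing $U_n = U^{\otimes n}$ and applying Proposition \ref{sw0} to each tensor factor, the $(m_1,\dots,m_n)$-weight space for the Cartan of $\gl_n$ is $\bigotimes_k W_{\theta^{(1)}(m_k,0)}$, whose $q$-character is $\prod_k \chi_{m_k}$. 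This gives
\begin{align*}
\mathrm{Ch}(U_n;z,q) = \prod_{k=1}^n \sum_{m\in\Z} z_k^m\, \chi_m(q).
\end{align*}

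Next, I would multiply both expressions by the Weyl denominator $A_\rho(z) = \sum_\sigma (-1)^\sigma z^{\sigma\rho}$ and use $A_\rho \cdot \mathrm{ch}(L_\alpha) = A_{\alpha+\rho}$ to obtain
\begin{align*}
\sum_\beta q^{p(\beta)}\, A_{\beta+\rho}(z)\, \chi(W_{\theta^{(n)}(\beta,0)})(q) = A_\rho(z)\,\prod_{k=1}^n \sum_{m\in\Z} z_k^m\, \chi_m(q).
\end{align*}
Since $\alpha+\rho$ is strictly decreasing, the coefficient of $z^{\alpha+\rho}$ on the left is picked out only by $\beta=\alpha$, $\sigma=\mathrm{id}$, yielding $q^{p(\alpha)}\chi(W_{\theta^{(n)}(\alpha,0)})(q)$. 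On the right, expanding both factors and collecting gives $\sum_\sigma (-1)^\sigma \prod_k \chi_{(\alpha+\rho - \sigma\rho)_k}(q)$, which after the change of variable $\sigma \mapsto \sigma^{-1}$ and a relabeling of the product index coincides with the claimed $\sum_\sigma (-1)^\sigma \prod_i \chi_{(\sigma(\alpha+\rho)-\rho)_i}(q)$.

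The main obstacle is verifying that $\deg v_\alpha = p(\alpha)$. This reduces to computing $\deg D_i = i(i-1)/2$ and $\deg D_i^* = i(i+1)/2$ from the grading convention $\deg d_i^{(k)} = \deg d_i^{(k)*} = -i$, and then performing an Abel summation of these degrees weighted by the exponents in \eqref{sing vect2}. A secondary technicality is that the sums $\sum_{m\in\Z} z_k^m \chi_m(q)$ are two-sided formal series in the $z_k$; however, extracting the coefficient of a single monomial $z^{\alpha+\rho}$ after multiplication by the Laurent polynomial $A_\rho(z)$ is unambiguous, as each target coefficient is a finite alternating sum of finite products of $q$-series.
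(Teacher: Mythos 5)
Your proof is correct and takes essentially the same approach as the paper: the paper's phrase "the character of the space of $\gl_n$ singular vectors of weight $\al$ is computed as the alternating sum of the characters of the weight subspaces" is precisely the inversion of the Weyl character formula that you carry out explicitly via multiplication by the Weyl denominator $A_\rho(z)$ and extraction of the coefficient of $z^{\al+\rho}$. Both arguments rest on the two facts you cite---that the $\gl_n$-weight-$\mu$ subspace of $U_n$ has $q$-character $\prod_i\chi_{\mu_i}$, and that Proposition \ref{sw} identifies $W_{\theta^{(n)}(\al,0)}$ (up to the degree shift $p(\al)$) with the singular vectors of weight $\al$---so yours is just a more explicit write-up of the same argument.
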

\begin{proof}
The character of the subspace of vectors in $U_n$ of the $\gl_n$ weight $\mu=(\mu_1,\ldots,\mu_n)$
is obviously given by
$\prod_{i=1}^n\chi_{\mu_i}$. By Proposition \ref{sw}, the space $W_{\theta(\alpha,0)}$ is identified with the space of $\gl_n$ singular vectors of weight $\al$ in $U_n$ with the shift of the degree given by $p(\alpha)$. Moreover, Proposition \ref{sw} asserts that $U_n$ is a direct sum of finite-dimensional $\gl_n$-modules. Then
the character of the space of $\gl_n$ singular vectors of weight $\al$ is computed as the alternating sum of the characters of the weight subspaces. 
\end{proof}
\subsection{Other character formulas}
We finish with some conjectures which we checked for the small values of parameters.
\begin{conj}
The character of $\mathcal N_{0,0,0}^{(1,m)}$ is given by
\be
\chi(\mathcal N_{0,0,0}^{(1,m)})=\frac{\prod_{i=1}^{m-2}(1-q^i)^{m-i-1}}{(q)^{m+1}_\infty}\sum_{j=0}^\infty(-1)^jq^{\frac12 j(j+1)}\prod_{i=1}^{m-1}(1-q^{i+j}).
\en
\end{conj}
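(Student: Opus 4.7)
By the proposition characterizing $\mathcal N^{m,n}_{\al,\beta,\gamma}(u)$ in the previous subsection, the module $\mathcal N^{(1,m)}_{0,0,0}$ has a basis labeled by
\begin{align*}
P^{1,m}_{\emptyset,\emptyset,\emptyset}=\{\mub\in\mathcal P[\emptyset,\emptyset,\emptyset]\mid (a,b,c)\notin Y_\mub\}\,,
\end{align*}
where for $K=q_2q_3^m=q_1^bq_2^cq_3^a$ and $\omegab=\emptyset$ the unique admissible triple is $(a,b,c)=(m+1,1,2)$. The basis thus consists of plane partitions $\mub$ with $\mu^{(2)}_{m+1}=0$; the plane-partition inequalities force $\mu^{(k)}_i=0$ for all $k\ge 2$ and $i\ge m+1$, so such a $\mub$ is specified by a free partition $\mu^{(1)}$ together with a plane partition $(\mu^{(k)}_i)_{k\ge 2,\,1\le i\le m}$ bounded above by $\mu^{(1)}$ in each row. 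The character in the grading $\deg e_i=-\deg f_i=1$ is the generating function of such $\mub$ by total weight $|\mub|=\sum_{i,k}\mu^{(k)}_i$.

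My plan is to exploit the chain of submodules
\begin{align*}
\mathcal M^{1,m,0}_{\emptyset,\emptyset,\emptyset}(u)\supset\mathcal M^{1,m,1}_{\emptyset,\emptyset,\emptyset}(u)\supset\mathcal M^{1,m,2}_{\emptyset,\emptyset,\emptyset}(u)\supset\cdots
\end{align*}
constructed in Section \ref{Macmahon}, whose $t$-th term is generated by the singular vector $|\lab_{\omegab_t}\rangle$ at the minimal plane partition containing the cube $(m+t,t,t+1)$. Anticipating that this complex of singular vectors is an exact BGG-type resolution of $\mathcal N^{(1,m)}_{0,0,0}=\mathcal M^{1,m,0}/\mathcal M^{1,m,1}$, one would obtain
\begin{align*}
\chi\bigl(\mathcal N^{(1,m)}_{0,0,0}\bigr)=\sum_{t\ge 0}(-1)^t\chi\bigl(\mathcal M^{1,m,t}_{\emptyset,\emptyset,\emptyset}\bigr)\,.
\end{align*}
Each $\chi(\mathcal M^{1,m,t})=q^{|\omegab_t|}G_t(q)$, where $G_t(q)$ enumerates the plane partitions lying above $\omegab_t$; evaluating $|\omegab_t|$ explicitly and expressing $G_t(q)$ through a skew Macmahon-type product should reduce the conjecture to a $q$-series identity.

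Two steps demand the most work. First is proving exactness of the BGG-type resolution: one must construct intertwiners $\mathcal M^{1,m,t+1}\hookrightarrow\mathcal M^{1,m,t}$ from the singular vectors at the resonance $K=q_2q_3^m$ and analyze their cokernels, since the filtration established in Section \ref{Macmahon} gives only the first step. Second is the combinatorial identity: after writing the alternating sum in closed form, one has to match the numerator $\prod_{i=1}^{m-2}(1-q^i)^{m-i-1}\sum_{j\ge 0}(-1)^jq^{j(j+1)/2}\prod_{i=1}^{m-1}(1-q^{i+j})$. The denominator $(q)_\infty^{m+1}$ and the Jacobi-theta-like alternating factor strongly suggest the alternative route mentioned in the introduction: the quotient of $\mathcal E$ at $K=q_2q_3^m$ should yield, in an appropriate limit, the $W$-algebra of $\widehat{\mathfrak{gl}}(1,m)$, in which $\mathcal N^{(1,m)}_{0,0,0}$ ought to correspond to the vacuum module whose super Weyl--Kac character formula has precisely the stated shape, the sum over $j$ arising from the unique odd simple root. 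Either route ultimately rests on a non-trivial $q$-series manipulation, and this is the principal obstacle I expect.
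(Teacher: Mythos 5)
The statement you are trying to prove is labeled a \textbf{Conjecture} in the paper: the authors explicitly say ``we checked for the small values of parameters,'' and supply no proof. So there is nothing in the paper to compare your argument against, and your own text is candid that it is a plan rather than a proof — you flag both the exactness of the proposed resolution and the final $q$-series identity as open.

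That said, your set-up is correct and contains a real slip. You correctly locate the resonance triple: for $K=q_2q_3^m$ the unique admissible $(a,b,c)$ with $a,b,c\ge1$ is $(m+1,1,2)$, and $\mathcal N^{(1,m)}_{\emptyset,\emptyset,\emptyset}$ has basis labeled by plane partitions with $\mu^{(2)}_{m+1}=0$. But the central reduction you propose,
\begin{align*}
\chi\bigl(\mathcal N^{(1,m)}_{0,0,0}\bigr)\overset{?}{=}\sum_{t\ge 0}(-1)^t\chi\bigl(\mathcal M^{1,m,t}_{\emptyset,\emptyset,\emptyset}\bigr),
\end{align*}
cannot hold. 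By definition $\mathcal N^{(1,m)}=\mathcal M^{1,m,0}/\mathcal M^{1,m,1}$ as graded spaces, so $\chi(\mathcal N^{(1,m)})=\chi(\mathcal M^{1,m,0})-\chi(\mathcal M^{1,m,1})$, a plain two-term difference. Your proposed right-hand side therefore differs from the truth by $\sum_{t\ge1}\chi\bigl(\mathcal M^{1,m,2t}/\mathcal M^{1,m,2t+1}\bigr)$, which is nonzero. More structurally, a long exact sequence
$\cdots\to\mathcal M^{1,m,2}\to\mathcal M^{1,m,1}\to\mathcal M^{1,m,0}\to\mathcal N\to 0$
with the \emph{inclusion} maps you describe cannot exist: the inclusion $\mathcal M^{1,m,1}\hookrightarrow\mathcal M^{1,m,0}$ has trivial kernel, so exactness at $\mathcal M^{1,m,1}$ would force the image of $\mathcal M^{1,m,2}$ to vanish, i.e.\ $\mathcal M^{1,m,2}=0$, which is false. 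Any genuine BGG-type resolution here would have to use \emph{external} Verma-like Macmahon modules for shifted boundary data, not the internal filtration pieces themselves; the alternating $j$-sum in the conjectured formula presumably arises from the $q$-series structure of $\chi(\mathcal M^{1,m,0})-\chi(\mathcal M^{1,m,1})$ (or, as you suggest, from a Weyl--Kac-type formula for $\widehat{\mathfrak{gl}}(1,m)$), not from alternating over $t$. As written, then, the proposal does not establish the conjecture and rests on an identity that is false on its face.
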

\begin{conj}
The character of $\mathcal N_{0,0,0}^{(n,m)}$, where $n\geq m$ is given by
\begin{align*}
\chi(\mathcal N_{0,0,0}^{(n,m)})= \frac{1}{(q)_\infty^{m+n}}\sum_{\la_1\geq\la_2\geq \dots\geq \la_m\geq 0}(-1)^{\sum_{i=1}^m \la_i}q^{\frac12 \sum_{i=1}^m(\la_i^2+ (2i-1)\la_i)}\times \\
\prod_{1\leq i<j\leq m}(1-q^{\la_i-\la_j+j-i})\prod_{1\leq i<j\leq n}(1-q^{\la_i-\la_j+j-i}).
\end{align*}
Here we set $\la_j=0$ if $j>m$.
\end{conj}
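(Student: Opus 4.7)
The module $\mathcal N_{0,0,0}^{(n,m)}$ is the irreducible quotient of $\mathcal M_{\emptyset,\emptyset,\emptyset}(u,K)$ at the resonance $K=q_2^mq_3^n$. Solving $K=q_1^bq_2^cq_3^a$ with $q_1q_2q_3=1$ and $a,b,c\ge1$ minimal, the excluded cube is $(a,b,c)=(n+1,1,m+1)$, so by the previous subsection a basis is indexed by the ``pit'' plane partitions $\mub\in\mathcal P[\emptyset,\emptyset,\emptyset]$ with $\mu^{(m+1)}_{n+1}=0$. Since the principal grading coincides with the box count $|\mub|$, the conjecture is an explicit $q$-series identity for the generating function of pit plane partitions.

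\textbf{Super Howe-duality approach.} The shape of the right-hand side is diagnostic: the prefactor $(q)_\infty^{-(m+n)}$ is the character of $m+n$ bosonic Fock spaces, while the signed summation over $\lambda$ with the double denominator $\prod_{i<j\le m}(1-q^{\la_i-\la_j+j-i})\prod_{i<j\le n}(1-q^{\la_i-\la_j+j-i})$ and the Gaussian $q^{\tfrac12\sum(\la_i^2+(2i-1)\la_i)}$ is exactly the signature of the super Weyl--Kac formula for the Lie superalgebra $\gl(m,n)$, whose even part is $\gl_m\oplus\gl_n$. This matches the introductory prediction that $\mathcal E^{\mathrm{red}}_{q_1,q_2,q_3,q_2^mq_3^n}$ should degenerate to the $W$-algebra of $\widehat{\gl}(m,n)$. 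I therefore plan to embed $\mathcal N_{0,0,0}^{(n,m)}$ into a mixed Fock tensor product
\[
\mathcal F_2(u_1)\otimes\cdots\otimes\mathcal F_2(u_m)\otimes\mathcal F_3(v_1)\otimes\cdots\otimes\mathcal F_3(v_n)
\]
(the parameters $u_i,v_j$ tuned so that the total level is $(1,q_2^mq_3^n)$) and to construct a commuting action of a quantum $\gl(m,n)$, producing a super-Howe decomposition $\mathcal F_2^{\otimes m}\otimes\mathcal F_3^{\otimes n}\cong\bigoplus_\Lambda L_\Lambda^{\gl(m,n)}\boxtimes V_\Lambda^{\mathcal E}$ in which $\mathcal N_{0,0,0}^{(n,m)}\cong V_0$, the trivial isotypic component.

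\textbf{Character extraction.} Granted the duality, the sought character equals the $q$-dimension of the space of $\gl(m,n)$-singular vectors of weight $0$ in the Fock tensor product. Applying the super Weyl--Kac formula and extracting the singular part as an alternating sum over $\gl_m\oplus\gl_n$-weights should produce precisely the signed sum of the conjecture: the summand indexed by $\lambda=(\la_1,\ldots,\la_m)$ corresponds to translates $\sigma(\lambda+\rho)-\rho$ with $\sigma\in S_m\times S_n$; the even Weyl denominators give the two products $\prod_{i<j\le m}$ and the ``even part'' of $\prod_{i<j\le n}$; the odd root factors, dressed by the sign and the Gaussian, account for the mixed factors $(1-q^{\la_i+j-i})$ with $i\le m<j\le n$. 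The Gaussian $q^{\tfrac12(\la_i^2+(2i-1)\la_i)}$ plays exactly the role of the degree shift $p(\alpha)$ in the proof of Proposition \ref{sw}.

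\textbf{Main obstacle and reductions.} The hard part is Step 2: the Kac--Radul $\gl_\infty$--$\gl_n$ duality of Section \ref{characters} is classical, but a quantum-toroidal super analog pairing $\mathcal E$ at level $q_2^mq_3^n$ with a quantization of $\widehat{\gl}(m,n)$ on a Fock tensor product of two different types (with $\mathcal F_3$ factors playing the fermionic role) is, to my knowledge, not in the literature. The compatibility of the $\mathcal F_3$ factors with the $\mathcal E$-coproduct at this special level and the identification of the centralizer must be established from scratch using the techniques of Section \ref{Macmahon}. As sanity checks and potentially tractable intermediate steps I would first verify two limiting cases: $m=1$ (the preceding conjecture, where the sum involves a single index and a telescoping argument in the spirit of the preceding lemma should suffice) and $m=n$ (where the $\gl(n,n)$ character theory and the minimal-model characters of the $\widehat{\gl}(n,n)$ $W$-algebra are comparatively tractable and may be matched directly). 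Failing the super-duality route, one could attempt a direct combinatorial bijection between pit plane partitions and a signed set enumerated by the right-hand side, generalizing the involution used for the one-parameter $\bar\chi_k$.
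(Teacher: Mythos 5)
The statement you are tackling is stated as a \emph{conjecture} in the paper; the authors explicitly say in Section 5.4 that they only ``checked for the small values of parameters,'' and in the introduction that the underlying $\widehat{\gl}(m,n)$ $W$-algebra degeneration is itself conjectural. There is therefore no proof in the paper to compare against, and your proposal does not supply one either. You have correctly reconstructed the combinatorial setup (the resonance forces $(a,b,c)=(n+1,1,m+1)$, so the basis of $\mathcal N^{n,m}_{0,0,0}$ is indexed by plane partitions with $\mu^{(m+1)}_{n+1}=0$) and you have correctly identified the heuristic that motivates the formula — this is exactly the $\widehat{\gl}(m,n)$ prediction the authors state in the introduction. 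But, as you yourself acknowledge, the central step — establishing a super Howe duality on a mixed Fock space $\mathcal F_2^{\otimes m}\otimes\mathcal F_3^{\otimes n}$ pairing $\mathcal E$ at level $(1,q_2^mq_3^n)$ with a quantization of $\widehat{\gl}(m,n)$, and identifying $\mathcal N^{n,m}_{0,0,0}$ with the trivial isotypic component — is nowhere established and would be a substantial piece of new mathematics, not a routine adaptation of the bosonic Kac--Radul argument in Section 5.

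A few remarks on your proposed reductions. The case $m=n$ that you list as a sanity check is in fact \emph{proved} in Section 5 (Propositions 4.8 and 5.5 together with Theorem 5.7 give the character of $\mathcal N^{n,n}_{\al,\emptyset,\emptyset}$ via the classical $\gl_\infty$--$\gl_n$ duality), so the useful task there is the nontrivial $q$-series rewriting needed to match the $S_n$-alternating sum of Theorem 5.7 with the single-$\lambda$ sum of the conjecture at $\al=0$. The $m=1$ case does specialize correctly: after collecting the $\lambda$-independent factor $\prod_{2\le i<j\le n}(1-q^{j-i})=\prod_{k=1}^{n-2}(1-q^k)^{n-1-k}$, the $m=1$ summand reproduces the preceding conjecture with $m,n$ exchanged (consistent with the $\mathfrak S_3$ symmetry), but that conjecture is also unproved in the paper. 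Finally, your identification of the summand with a super Weyl--Kac numerator is plausible but unverified: the displayed sum runs over a single partition with at most $m$ parts rather than over $S_m\times S_n$-orbits, and you would still have to explain why the orbit sum telescopes to such a single sum and why the factors with $m<j\le n$ genuinely encode the odd-root contributions. As it stands, this is a reasonable research plan but not a proof, and it should not be presented as one.
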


{\bf Acknowledgments.}
Research of BF is partially supported by
RFBR initiative interdisciplinary project grant 09-02-12446-ofi-m,
by RFBR-CNRS grant 09-02-93106, RFBR grants 08-01-00720-a, 
NSh-3472.2008.2 and 07-01-92214-CNRSL-a.
Research of MJ is supported by the Grant-in-Aid for Scientific
Research B-23340039.
Research of EM is
supported by NSF grant DMS-0900984. 
The present work has been carried out during the visits of BF and EM 
to Kyoto University. They wish to thank the University for hospitality.

\end{document}